\numberwithin{equation}{section}
\definecolor{webgreen}{rgb}{0,.5,0}
\definecolor{webbrown}{rgb}{.8,0,0}
\definecolor{emphcolor}{rgb}{0.95,0.95,0.95}
\ifpdf \hypersetup{pdftex,
	bookmarksopen=true,
	bookmarksnumbered=true
} \else \hypersetup{dvips} \fi
\theoremstyle{plain}
  \newtheorem{teor}{Theorem}[section]
  \newtheorem{prop}[teor]{Proposition}
  \newtheorem{lema}[teor]{Lemma}
\theoremstyle{remark}  
  \newtheorem{rem}[teor]{Remark}
\theoremstyle{plain}
\newtheoremstyle{hyp}{}{}{\itshape}{}{}{}{0pt}{}
\theoremstyle{hyp}
\DeclareMathAlphabet{\mathpzc}{OT1}{pzc}{m}{it}
\DeclareMathOperator*{\infess}{inf\, ess}
\DeclareMathOperator{\deri}{D}
\DeclareMathOperator{\dist}{dist}
\DeclareMathOperator*{\argmin}{arg\, min}
\DeclareMathOperator{\Lp}{L}
\DeclareMathOperator{\trans}{T}
\DeclareMathOperator{\comp}{c} 
\DeclareMathOperator{\loc}{loc}
\DeclareMathOperator{\hol}{C}
\DeclareMathOperator{\tr}{tr}
\DeclareMathOperator{\sob}{W}
\DeclareMathOperator{\expo}{e}
\DeclareMathOperator{\sop}{supp}
\newcommand{\w}{\mathpzc{w}}
\newcommand{\sol}{\mathpzc{u}}
\newcommand{\E}{\mathbbm{E}}
\newcommand{\tmt}{\mathpzc{t}}
\newcommand{\tms}{\mathpzc{s}}
\newcommand{\set}{\mathcal{O}}
\newcommand{\dif}{\mathcal{L}}
\newcommand{\R}{\mathbbm{R}}
\newcommand{\N}{\mathbbm{N}}
\newcommand{\uno}{\mathbbm{1}}
\newcommand{\der}{\mathrm{d}}
\newcommand{\F}{\mathbbm{F}}
\newcommand{\Pro}{\mathbbm{P}}
\newcommand{\BE}{\begin{equation}}
\newcommand{\EE}{\end{equation}}
\newcommand {\BA}{\begin{align}}
\newcommand{\EA}{\end{align}}
\newcommand{\eqdef}{\raisebox{0.4pt}{\ensuremath{:}}\hspace*{-1mm}=}
\newcommand{\defeq}{=\hspace*{-1mm}\raisebox{0.4pt}{\ensuremath{:}}}
\title{{ On a mixed singular/switching control problem with multiples regimes \footnote{This study has been funded by the Russian Academic Excellence Project `5-100'.} }}
\author{ Mark Kelbert\quad\quad Harold A. Moreno-Franco\footnote{Corresponding author: hmoreno@hse.ru}\\ {\small\it Department of Statistics and Data Analysis}\\ {\small\it Laboratory of Stochastic Analysis and its Applications}\\{\small\it National Research University Higher School of Economics} \\ {\small\it Moscow, Russia}}
\date{}
\begin{document}	
\maketitle
\begin{abstract}
	\noindent This paper studies {a} mixed singular/switching stochastic control problem for a multidimensional diffusion with multiples regimes  on a bounded domain. Using  probabilistic, partial differential equation (PDE) and penalization techniques, we show that the value function associated with this problem  agrees with the  solution to a Hamilton-Jacobi-Bellman (HJB) equation. In that way, we see that the  regularity of the  value function is  $\hol^{0,1}\cap\sob^{2,\infty}_{\loc}$.\\
	\noindent{\bf Keywords.} Singular/switching stochastic control problem, Hamilton-Jacobi-Bellman equations, non-linear partial differential system. 
\end{abstract}
\section{Introduction}
Singular and switching stochastic control problems have been of great research interest in  control theory due to their  applicability to diverse problems of finance, economy, biology and other fields; see, e.g., \cite{KM2019,P2009} and the reference therein. For that reason, new techniques and problems are continuously developed. One of those problems, called mixed singular/switching stochastic control problem, concerns with applying both singular and switching  controls in optimal way  on  some stochastic process that can change regime. Within a regime, a singular control  is executed.  

This paper is mainly  concerned  with determining the regularity of the value function in a mixed singular/switching stochastic control problem for a multidimensional diffusion with multiples regimes   on a bounded domain. Our study is focused on the stochastic controlled process $(X^{\xi,\varsigma},I^{\varsigma})$ that evolves as:
\begin{equation}\label{es1}
\begin{split}
&X^{\xi,\varsigma}_{\tmt}=X^{\xi,\varsigma}_{\tilde{\tau}_{i}} -\displaystyle\int_{\tilde{\tau}_{i}}^{\tmt}b(X^{\xi,\varsigma}_{\tms},\ell_{i})\der \tms+\displaystyle\int_{\tilde{\tau}_{i}}^{\tmt}\sigma(X^{\xi,\varsigma}_{\tms},\ell_{i})\der W_{\tms}-\displaystyle\int_{[\tilde{\tau}_{i},\tmt)}\mathbb{n}_{\tms}\der\zeta_{\tms},\\
&I^{\varsigma}_{\tmt}=\ell_{i}\quad\text{for}\ \tmt\in[{\tilde\tau}_{i},\tilde{\tau}_{i+1})\ \text{ and }\ i\geq0,
\end{split}
\end{equation} 
where $X^{\xi,\varsigma}_{0}=\tilde{x}\in\overline{\set}\subset\R^{d}$, $I^{\varsigma}_{0-}=\tilde{\ell}\in\mathbb{I}\eqdef\{1,2,\dots,m\}$, $\tilde{\tau}_{i}\eqdef\tau_{i}\wedge\tau$, and { $\tau$ represents the first exit time of the process $X$ from the set $\set$.}  Here $W=\{W_{t}:t\geq0\}$ is a $k$-dimensional standard Brownian motion. The couple $(\xi,\varsigma)$ is a stochastic control strategy, defined later on (see  \eqref{cont.2}, \eqref{cont.1}), which is  comprised of a singular control  $\xi\eqdef(\mathbb{n},\zeta)\in\R^{d}\times\R_{+}$ and a switching control  $\varsigma\eqdef(\tau_{i},\ell_{i})_{i\geq0}$ with  $\tau_{i}$ a  stopping time  and $\ell_{i}\in\mathbb{I}$ for $i\geq1$.   The process $(\xi,\varsigma)$ is chosen in such a way that it will minimize the cost criterion 
\begin{align}\label{esd1.1.1}
V_{\xi,\varsigma}(\tilde{x},\tilde{\ell})&\eqdef\sum_{i\geq0}\E_{\tilde{x},\tilde{\ell}}\big[\expo^{-r(\tau_{i+1})}\vartheta_{\ell_{i},\ell_{i+1}}\uno_{\{\tau_{i+1}<\tau\}}\big]\notag\\
&\quad+\sum_{i\geq0}\E_{\tilde{x},\tilde{\ell}}\bigg[\int_{[\tilde{\tau}_{i},\tilde{\tau}_{i+1})}\expo^{-r(\tms)}[h_{\ell_{i}}(X^{\xi,\varsigma}_{\tms})\der \tms+g_{\ell_{i}}(X^{\xi,\varsigma}_{\tms-})\circ\der\zeta_{\tms}]\bigg].
\end{align}

One of  {the} main  goals of this paper  is to verify that the value function
\begin{equation}\label{vf1.0}
V_{\tilde{\ell}}(\tilde{x})\eqdef\inf_{\xi,\varsigma}V_{\xi,\varsigma}(\tilde{x},\tilde{\ell}),\ \text{for}\ (\tilde{x},\tilde{\ell})\in\overline{\set}\times \mathbb{I},
\end{equation}
is in $\hol^{0,1}\cap\sob^{2,p}_{\loc}$; see Theorem \ref{M1}. 

Previous to this paper,  the  problem \eqref{vf1.0} has been studied   extensively (both in bounded and unbounded set domains) for separate cases: when the process $X^{\xi,\varsigma}$ cannot change its regime or when the singular control $\xi$ is not executed on $X^{\xi,\varsigma}$; see, e.g., \cite{KM2019,lenh1983,P2009,Y1988} and the reference therein. It is   natural to study  stochastic control problems where both  controls are involved, and see how they can be applied.

For example, both (singular and switching) controls    have been  used recently   in    the study of interactions between {dividend} and investment policies. In that context, a firm that operates under an uncertain environment and risk constraints wants to determine an optimal control on the dividend and investment {policies} (see  \cite{AM2015,  CLS2013, CGL2017, pham2008}). { The authors  assumed that the cash reserve process of a firm  switches between $m$-regimes governed by  a  Brownian motion with the same volatility  {but} different  drifts \cite{CLS2013, pham2008}, a two-dimensional Brownian motion with the same stochastic volatility  {but} different stochastic  drifts \cite{CGL2017}, or different compound Poisson processes with drifts \cite{AM2015}. The costs and benefits for switching  regime are made automatically in the firm's cash reserve and   they are not considered in the expected returns.}

{The optimal dividend/investment policy strategy problem  mentioned above was studied} on the whole spaces $\R$ or $\R^{2}$; see \cite{AM2015,  CLS2013, pham2008} and  \cite{CGL2017}, respectively, and using viscosity solution approaches, {the authors} got qualitative descriptions of the value function 
\begin{equation}\label{py1}
\sup_{\zeta}\bigg\{\E_{\tilde{x},\tilde{\ell}}\bigg[\int_{0}^{\tau}\expo^{-ct}\der\zeta_{t}\bigg]\bigg\},\quad\text{with $c>0$ a fixed constant.}
\end{equation}
It must be highlighted  that the explicit or quasi-explicit solution of the optimal strategy for the first two cases mentioned in the previous paragraph (see \cite{ CLS2013, CGL2017, pham2008}), has not been found yet and still remains an open problem. For the case when the cash reserve process switches between $m$-regimes governed by different Poisson processes with drifts,  Azcue and Muler \cite{AM2015}  proved that there exists an optimal dividend/switching strategy, which is stationary with a band structure.

{In addition,   Guo and Tomecek  \cite{GP2008}  studied a connection between   singular control of finite variation and optimal switching problems.}

Notice that the mixed singular/switching  stochastic control problem, proposed in \eqref{es1}--\eqref{vf1.0}, is defined on an open bounded domain $\set\subset\R^{d}$ and the costs for switching of regime,  represented by {$\vartheta_{\ell,\kappa}$}, are considered in the functional costs $V_{\xi,\varsigma}$.  Additionally, it must be highlighted that our problem is given in a general way, meaning that for every regime $\ell$, the drift and the volatility of the process $X^{\xi,\varsigma}$ are stochastic, and there are two types of costs: $g_{\ell}(X^{\xi,\varsigma})\circ\der\zeta$ when the singular control $\xi$  is exercised or $h_{\ell}(X^{\xi,\varsigma})$ if not. For more details about the cost $g_{\ell}(X^{\xi,\varsigma})\circ\der\zeta$, see the next section.

The contribution of this paper is twofold.
\begin{enumerate}

\item We characterize the solution $u$ to the HJB equation, which is closely related to the  value function $V$ given in \eqref{vf1.0},   as   limit of a sequence of solutions to another { system of variational inequalities} that is related to  $\varepsilon$-penalized absolutely stochastic continuous/switching control problems; see Subsection \ref{penal1}. 

\item We give an explicit representation of the optimal strategy of these $\varepsilon$-penalized control problems. Then, by probabilistic methods and  construction of an approximating   sequence of solutions  mentioned previously, we verify that the value function \eqref{vf1.0} and the solution $u$ to  the HJB equation \eqref{esd5} agree on $\overline{\set}$, showing that $V_{\ell}$ belongs to $\hol^{0,1}(\overline{\set})\cap\sob^{2,p}_{\loc}(\overline{\set})$; see Section \ref{Pp1}.
\end{enumerate}


{The rest of this} document is organized as follows: in Section \ref{apr1} we consistently formulate  the stochastic control problem  studied here; see \eqref{vf1},  and give the assumptions for obtaining the main results of this paper; Theorems \ref{M1}, \ref{verf2}. Also we introduce the $\varepsilon$-penalized absolutely continuous/switching control problem and its HJB equation \eqref{p13.0.1.0}. Then, in Section   \ref{euHJB1},  a non-linear partial differential system (NPDS) is introduced, where some \textit{\`a priori} estimates are given. Afterwards,  using  Lemma \ref{Lb1}, Proposition \ref{princ1.0}, Arzel\`a-Ascoli  compactness criterion and the reflexivity of $\Lp^{p}_{\loc}(\set)$;  see \cite[Section C.8, p. 718]{evans2} and \cite[Thm. 2.46, p. 49]{adams} respectively, we prove the existence, regularity and uniqueness of the solution $u^{\varepsilon}$ to \eqref{p13.0.1.0}; see Subsection \ref{prop1}. In Subsection \ref{pro2}, Theorem \ref{M1} is verified whose proof is done by {selecting} a sub-sequence of $\{u^{\varepsilon}\}_{\varepsilon\in(0,1)}$.  Then, in Section \ref{Pp1}, a verification lemma of the  $\varepsilon$-penalized absolutely continuous/switching control problem  is presented. This  lemma is divided into two parts: Lemmas  \ref{lv1} and \ref{optim1}. Afterwards, we give the proof of Theorem \ref{M1}.    In section \ref{conc},   we draw  {our}  conclusions and discuss possible extensions of this paper. Finally, the proofs of  Lemma \ref{Lb1} and Proposition \ref{princ1.0} are given in the appendix. To finalize this section, let us say that the notations and definitions of the function spaces that are used in this paper are standard and the reader can find them in \cite{adams,Cs1,evans2, garroni,gilb}.

\section{Model formulation, assumptions and main results}\label{apr1} 

Let $W=\{W_{\tmt}:\tmt\geq0\}$ be the $k$-dimensional standard Brownian motion  defined on a complete probability space  $(\Omega,\mathcal{F},\Pro)$. Let $\F=\{\mathcal{F}_{\tmt}\}_{\tmt\geq0}$ be the filtration  generated by $W$. The process $(X^{\xi,\varsigma},I^{\varsigma})$ is governed by the SDE \eqref{es1}, { where the parameters $b_{\ell}\eqdef b(\cdot,\ell):\overline{\set}\longrightarrow\R^{d}$ and $\sigma_{\ell}\eqdef\sigma(\cdot,\ell):\overline{\set}\longrightarrow\R^{d}\times\R^{k}$, with $\ell\in\mathbb{I}$ fixed,  satisfy appropriate conditions to ensure the well-definiteness of the stochastic differential equation (SDE) \eqref{es1}; see Subsection \ref{as1}}.   The control process $(\xi,\varsigma)$ is in $\mathcal{U}\times\mathcal{S}$ where the singular control $\xi=(\mathbb{n},\zeta)$ belongs to the class $\mathcal{U}$ of admissible controls that satisfy
\begin{equation}\label{cont.1}
\begin{cases}
(\mathbb{n}_{\tmt},\zeta_{\tmt})\in\R^{d}\times\R_+,\ t\geq0,\ \text{such that}\ X^{\xi,\varsigma}_{\tmt}\in\set\,\ \tmt\in[0,\tau),\\ 
(\mathbb{n},\zeta)\ \text{is adapted to the filtration}\  \F,\\
\zeta_{0-}=0\ \text{and}\ \zeta_{\tmt}\ \text{is non-decreasing and is right continuous} \\
\text{with left hand limits,}\ \tmt\geq0,\ \text{and }\ |\mathbb{n}_{\tmt}|=1\ {\der\zeta_{\tmt}\text{-a.s.},\ \tmt\geq0} ,
\end{cases}
\end{equation}
and the switching   control process   $\varsigma\eqdef(\tau_{i},\ell_{i})_{i\geq0}$ belongs to the class $\mathcal{S}$ of switching regime sequences that satisfy
\begin{equation}\label{cont.2}
\begin{cases}
\varsigma\ \text{is a sequence of $\F$-stopping times and regimes in $\mathbb{I}$, i.e,}\\ 
\text{$\varsigma =(\tau_{i},\ell_{i})_{i\geq0}$ is such that $0=\tau_{0}\leq\tau_{1}<\tau_{2}<\cdots$, $\tau_{i}\uparrow\infty$ as $i\uparrow \infty$ $\Pro$-a.s.,}\\
\text{ and for each $i\geq0$, $ \ell_{i}\in\mathbb{I}\eqdef\{1,2,\dots,m\}$.}
\end{cases}
\end{equation}
Notice that  $I^{\varsigma}$   is a c\`adl\`ag process that starts in $\tilde{\ell}$ which has a possible jump at $0$, i.e., if $\tau_{1}=0$, $I^{\varsigma}_{\tau_{1}}=\ell_{1}$. Without the influence of the singular control $\xi$ in $X^{\xi,\varsigma}$, i.e. $\zeta\equiv0$,  the infinitesimal generator of $X^{\xi,\varsigma}$, within the regime $\ell\in\mathbb{I}$, is given by 
\begin{equation}
\dif_{\ell} u_{\ell} =\tr[a_{\ell} \deri^{2}u_{\ell} ]-\langle b _{\ell} ,\deri^{1}u_{\ell} \rangle,\label{eq2}
\end{equation}
with $a_{\ell}=(a_{\ell\, ij})_{d\times d}$ is such that $a_{\ell\,ij}\eqdef\frac{1}{2}[\sigma_{\ell}\sigma_{\ell}^{\trans}]_{ij}$. Here $|\cdot|$, $\langle\cdot,\cdot\rangle$ and $\tr[\,\cdot\,]$  are the Euclidean norm, the inner product, and  the  {matrix trace}, respectively.
\begin{rem}
	Taking $\Delta X^{\xi,\varsigma}_{\tmt}\eqdef X^{\xi,\varsigma}_{\tmt}-X^{\xi,\varsigma}_{\tmt-}$,  with $(\xi,\varsigma)\in\mathcal{U}\times\mathcal{S}$ , we observe  $X^{\xi,\varsigma}_{\tmt}=X^{\xi,\varsigma}_{\tmt-}-\mathbb{n}_{\tmt}\Delta\zeta_{\tmt}\in \set$ for $\tmt\in[\tilde{\tau}_{i},\tilde{\tau}_{i+1})$.
\end{rem}	
Given  the initial state  $(\tilde{x},\tilde{\ell})\in\overline{\set}\times\mathbb{I}$  and the  control   $(\xi,\varsigma)\in\mathcal{U}\times\mathcal{S}$,  the \textit{functional cost} of the controlled process $(X^{\xi,\varsigma},I^{\varsigma})$  is defined by \eqref{esd1.1.1}, 
where $\E_{\tilde{x},\tilde{\ell}}$ is the expected value associated with $\Pro_{\tilde{x},\tilde{\ell}}$,    the probability law of $(X^{\xi,\varsigma},I^{\varsigma})$ when it starts at $(\tilde{x},\tilde{\ell})$,  and
\begin{align}
r(\tmt)&\eqdef\int_{0}^{\tmt}c(X^{\xi,\varsigma}_{\tms},I^{\varsigma}_{\tms})\der\tms={\sum_{i\geq0}\int_{\tmt\wedge\tilde{\tau}_{i}}^{\tmt\wedge\tilde{\tau}_{i+1}}c_{\ell_{i}}(X^{\xi,\varsigma}_{\tms})\der\tms},\label{eq0.1}\\
\int_{[ \tilde{\tau}_{i},\tmt\wedge \tilde{\tau}_{i+1})}\expo^{-r( \tms)} g_{\ell_{i}}   (X^{\xi,\varsigma}_{ \tms-})\circ\der\zeta_{ \tms}&\eqdef\int_{{\tilde{\tau}}_{i}}^{\tmt\wedge{\tilde{\tau}}_{i+1}}\expo^{-r(\tms)} g_{\ell_{i}}   (X^{\xi,\varsigma}_{ \tms})\der\zeta_{ \tms}^{\comp}\notag\\
&\quad+\sum_{ \tilde{\tau}_{i}\leq\tms<\tmt\wedge \tilde{\tau}_{i+1}}\expo^{-r(\tms)}\Delta\zeta_{ \tms}\int_{0}^{1} g_{\ell_{i}}    (X^{\xi,\varsigma}_{ \tms-}-\lambda \mathbb{n}_{ \tms}\Delta\zeta_{ \tms})\der\lambda, \label{eq0.1.1}
\end{align}
where $\zeta^{\comp}$ denotes the continuous part of $\zeta$, $c_{\ell}\eqdef c(\cdot,\ell)$ is a  positive continuous function from $\overline{\set}$ to $\R$,  and $ h_{\ell}\eqdef h(\cdot,\ell)$, $ g_{\ell}\eqdef g(\cdot,\ell)$ are non-negative continuous functions from $\overline{\set}$ to $\R$.   Notice that within the regime $\ell\in\mathbb{I}$  the singular control $\xi=(\mathbb{n},\zeta)$ generates two types of costs. One of them is when $\xi$ continuously controls   the process $X^{\xi,\varsigma}$ by $\zeta^{\comp}$, the other  is when $\xi$ controls $X^{\xi,\varsigma}$ by jumps of $\zeta$.  While $X^{\xi,\varsigma}$ is in the regime $\ell$, the term $\int_{0}^{1} g_{\ell}   (X^{\xi,\varsigma}_{\tms-}-\lambda \mathbb{n}_{\tms}\Delta\zeta_{\tms})\der\lambda$ represents the cost for using the   jump  $\Delta\zeta_{\tms}\neq0$ with direction $-\mathbb{n}_{\tms}$ on $X^{n,\zeta}_{\tms-}$ at time $ \tms$.  
\begin{rem}
	If $g_{\ell}\equiv a$, with $a$ a positive constant, Equation  \eqref{eq0.1.1} is reduced to the following form 
	\begin{equation*}
	\int_{[ \tilde{\tau}_{i},\tmt\wedge \tilde{\tau}_{i+1})}\expo^{-r( \tms)} g_{\ell_{i}}   (X^{\xi,\varsigma}_{ \tms-})\circ\der\zeta_{ \tms}=a\int_{[ \tilde{\tau}_{i},\tmt\wedge \tilde{\tau}_{i+1})}\expo^{-r( \tms)}\der\zeta_{ \tms}.
	\end{equation*}
	It means that the costs for controlling $X^{\xi,\varsigma}$ by $\zeta^{\comp}$ or $\Delta \zeta\neq0$ are the same.  
\end{rem}

The cost for switching from the regime $\ell$ to $\kappa$ is given by a constant $\vartheta_{\ell,\kappa}\geq0$ and we assume that 
\begin{equation}\label{eq1}
\vartheta_{\ell_{1},\ell_{3}}\leq\vartheta_{\ell_{1},\ell_{2}}+\vartheta_{\ell_{2},\ell_{3}},\ \text{for}\ \ell_{3}\neq\ell_{1}, \ell_{2}, 
\end{equation}
which means that it is cheaper to switch directly from regime $\ell_{1}$ to $\ell_{3}$ than  using the intermediate regime $\ell_{2}$.  Additionally, we assume that there is  no {\it loop of zero cost}, i.e., 
	\begin{equation}\label{l1}
	\text{no family of regimes}\  \{\ell_{0},\ell_{1}, \dots,\ell_{n},\ell_{0}\}\ \text{such that}\  \vartheta_{\ell_{0},\ell_{1}}=\vartheta_{\ell_{1},\ell_{2}}=\cdots=\vartheta_{\ell_{n},\ell_{0}}=0. 
	\end{equation} 
The \textit{value function} is defined by 
\begin{equation}\label{vf1}
V_{\tilde{\ell}}(\tilde{x}) \eqdef\inf_{(\xi,\varsigma)\in\mathcal{U}\times\mathcal{S}}V_{\xi,\varsigma}(\tilde{x},\tilde{\ell}),\ \text{for}\ (\tilde{x},\tilde{\ell})\in\overline{\set}\times \mathbb{I}.
\end{equation}
From \eqref{l1} and by dynamic programming principle,  we   identify heuristically   that the value function $V$ is associated with  the following HJB equation  
	\begin{equation}\label{esd5}
	\max\{[c_{\ell}-\dif_{\ell}]u_{\ell}-  h_{\ell},|\deri^{1}u_{\ell}|- g_{\ell},  u_{\ell}-\mathcal{M}_{\ell}u\}= 0,\ \text{ in}\ \set,\quad
	\text{s.t.}\ 
	u_{\ell}=0 \ \text{on}\  \partial\set,
	\end{equation}
	where $u=(u_{1},\dots,u_{m}):\overline{\set}\longrightarrow\R^{m}$ and, for $\ell\in\mathbb{I}$,  the operators $\dif_{\ell}$ is as in \eqref{eq2} and $\mathcal{M}_{\ell}$ is defined by
	\begin{align}
	\mathcal{M}_{\ell}u&=\min_{\kappa\in \mathbb{I}\setminus\{\ell\}}\{u_{\kappa}+\vartheta_{\ell,\kappa}\}.\label{p6.0}
	\end{align}

\subsection{Assumptions and main results}\label{as1}
First, let us give the necessary conditions to guarantee the existence and uniqueness of the solution $u$ to the HJB equation \eqref{esd5} on the space  $\hol^{0,1}(\overline{\set})\cap\sob^{2,\infty}_{\loc}(\set)$.
{\it
\begin{enumerate}[label=(H\arabic*),ref=H\arabic*]

\item  \label{h3} The switching costs sequence  $\{\vartheta_{\ell,\kappa}\}_{\ell,\kappa\in\mathbb{I}}$ is such that  $\vartheta_{\ell,\kappa}\geq 0$ and \eqref{eq1} and \eqref{l1} hold.  
\item\label{h0}  The domain set $\set$ is an open and bounded set such that its boundary $\partial\set$ is of class $\hol^{4,\alpha'}$, with $\alpha'\in(0,1)$ fixed.

\end{enumerate}
Let $\ell$ be in $\mathbb{I}$. Then:
\begin{enumerate}[label=(H\arabic*),ref=H\arabic*]
\setcounter{enumi}{2}	
\item\label{a4}The functions $h_{\ell},g_{\ell}\in\hol^{2,\alpha'}(\overline{\set})$ are non-negative and  $||h_{\ell}||_{\hol^{2,\alpha'}(\overline{\set})}$, $||g_{\ell}||_{\hol^{2,\alpha'}(\overline{\set})}$  are bounded by some finite positive constant $\Lambda$. 
\item\label{h2} Let $\mathcal{S}(d)$ be the set of $d\times d$ symmetric matrices. The coefficients of the differential part of $ \dif_{\ell}$, $a_{\ell}=(a_{\ell\,ij})_{d\times d}:\overline{\set}\longrightarrow\mathcal{S}(d)$, $b_{\ell}=(b_{\ell\,1},\dots,b_{\ell\,d}):\overline{\set}\longrightarrow\R^{d}$ and $c_{\ell}:\overline{\set}\longrightarrow\R$, are such that  $a_{\ell\,ij},b_{\ell\,i},c_{\ell}\in\hol^{2,\alpha'}(\overline{\set})$, $c_{\ell}>0$ on $\set$ and $||a_{\ell\,ij}||_{\hol^{2,\alpha'}(\overline{\set})},||b_{\ell\,i}||_{\hol^{2,\alpha'}(\overline{\set})},||c_{\ell}||_{\hol^{2,\alpha'}(\overline{\set})}$ are bounded by some finite positive constant $\Lambda$. We assume that there exists a real number $\theta>0$  such that  
		\begin{equation}\label{H2}
		\langle a_{\ell}(x) \zeta,\zeta\rangle\geq \theta|\zeta|^{2},\ \text{for all}\  x\in\overline{\set},\ \zeta\in \R^{d}.
		\end{equation} 
\end{enumerate}
}

Under assumptions \eqref{h3}--\eqref{h2},  the first main {goal} obtained in this document is as follows.
\begin{teor}\label{M1}
The HJB equation \eqref{esd5} has a unique  non-negative strong solution  {(in the almost everywhere sense)} $u=(u_{1},\dots,u_{m})$ where  $ u_{\ell}\in\hol^{0,1}(\overline{\set})\cap\sob^{2,\infty}_{\loc}(\set)$ for each $\ell\in\mathbb{I}$.
\end{teor}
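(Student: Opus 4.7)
The plan is to prove Theorem \ref{M1} by the penalization/approximation scheme outlined in the introduction, exploiting the $\varepsilon$-penalized absolutely continuous/switching problem whose HJB equation is \eqref{p13.0.1.0}. First, I would fix $\varepsilon\in(0,1)$ and use Lemma \ref{Lb1} together with Proposition \ref{princ1.0} (which handle the non-linear partial differential system with bounded nonlinearities, giving classical regularity via Schauder/$\sob^{2,p}$ theory) to produce a non-negative solution $u^{\varepsilon}=(u^{\varepsilon}_{1},\dots,u^{\varepsilon}_{m})$ to the penalized system in $\hol^{2,\alpha'}(\overline{\set})$. The key point at this stage is that the penalization replaces the gradient constraint $|\deri^{1}u_{\ell}|\leq g_{\ell}$ and the switching obstacle $u_{\ell}\leq\mathcal{M}_{\ell}u$ by smooth penalty terms, so standard quasilinear PDE techniques combined with a fixed-point or monotone iteration on the $m$ regimes deliver existence and uniqueness of $u^{\varepsilon}$ for each $\varepsilon$.

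Second, I would extract a priori estimates that are uniform in $\varepsilon$. Using the coercivity in \eqref{H2}, the smoothness of the coefficients (\eqref{a4}, \eqref{h2}), and the non-negativity of $h_{\ell},g_{\ell}$, one gets a uniform $\Lp^{\infty}(\overline{\set})$ bound on $u^{\varepsilon}_{\ell}$ by comparison with a suitable super-solution built from the bounds on $h_{\ell}$ and $c_{\ell}$. A Lipschitz bound $\|\deri^{1}u^{\varepsilon}_{\ell}\|_{\Lp^{\infty}(\overline{\set})}\leq C$ independent of $\varepsilon$ follows by Bernstein-type gradient estimates (differentiating the equation and testing against a cut-off of $|\deri^{1}u^{\varepsilon}_{\ell}|^{2}$), exploiting that $g_{\ell}$ is bounded and that the penalization term controls $(|\deri^{1}u^{\varepsilon}_{\ell}|-g_{\ell})_{+}$. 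Interior $\sob^{2,p}_{\loc}$ estimates then follow from the Calderón-Zygmund theory applied to $\dif_{\ell}u^{\varepsilon}_{\ell}=F_{\ell}^{\varepsilon}$, where $F_{\ell}^{\varepsilon}$ is bounded uniformly in $\varepsilon$ in $\Lp^{\infty}_{\loc}(\set)$.

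Third, I would pass to the limit $\varepsilon\downarrow0$. The uniform Lipschitz bound together with the boundary values $u^{\varepsilon}_{\ell}=0$ on $\partial\set$ and Arzel\`a-Ascoli yield a subsequence converging uniformly on $\overline{\set}$ to some $u_{\ell}\in\hol^{0,1}(\overline{\set})$. The $\sob^{2,p}_{\loc}$ bound and the reflexivity of $\Lp^{p}_{\loc}(\set)$ (for each $p<\infty$) give weak convergence of $\deri^{2}u^{\varepsilon}_{\ell}$, so $u_{\ell}\in\sob^{2,p}_{\loc}(\set)$ for every $p$, hence in $\sob^{2,\infty}_{\loc}(\set)$ by the uniform $\Lp^{\infty}_{\loc}$ bound on $\dif_{\ell}u^{\varepsilon}_{\ell}$ and Morrey-type considerations. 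Identifying the limit as a strong (a.e.) solution of \eqref{esd5} is the most delicate step: the penalty terms must be shown to force $|\deri^{1}u_{\ell}|\leq g_{\ell}$ and $u_{\ell}\leq\mathcal{M}_{\ell}u$ in the limit, while the PDE $[c_{\ell}-\dif_{\ell}]u_{\ell}=h_{\ell}$ holds on the non-contact set. The standard argument is to test the penalized equations with $(|\deri^{1}u^{\varepsilon}_{\ell}|-g_{\ell})_{+}$ and $(u^{\varepsilon}_{\ell}-\mathcal{M}_{\ell}u^{\varepsilon})_{+}$ respectively, and to verify that each of the three terms in the $\max$ of \eqref{esd5} vanishes a.e.\ on its associated coincidence set.

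Finally, uniqueness is obtained by a maximum-principle comparison between two solutions $u$ and $\tilde{u}$ of \eqref{esd5}: at any interior maximum of $u_{\ell}-\tilde{u}_{\ell}$ one uses that if $u_{\ell}(x_{0})>\tilde{u}_{\ell}(x_{0})$ then the PDE branch must hold for $u_{\ell}$ at $x_{0}$, and one cycles through regimes via \eqref{p6.0}; assumption \eqref{l1} (no loop of zero cost) rules out the degenerate case where the switching obstacle is active for all $\ell$ along a cycle. The main obstacle I foresee is precisely this limit-identification step together with the uniform $\sob^{2,\infty}_{\loc}$ estimate, since the gradient constraint is degenerate and the coupling through $\mathcal{M}_{\ell}u$ is non-local in the regime variable; everything else reduces to applying the lemmas and standard elliptic theory already invoked in the paper.
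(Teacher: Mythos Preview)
Your overall strategy coincides with the paper's: double-penalize via $\psi_{\varepsilon}$ and $\psi_{\delta}$, invoke Lemma~\ref{Lb1} and Proposition~\ref{princ1.0} for uniform estimates on $u^{\varepsilon,\delta}$, pass to the limit by Arzel\`a--Ascoli and weak $\Lp^{p}_{\loc}$ compactness (first $\delta\to0$, then $\varepsilon\to0$), and prove uniqueness by a maximum-principle argument combined with \eqref{l1}. Two points, however, are genuine gaps.

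First, the interior second-derivative bound cannot be obtained from Calder\'on--Zygmund applied to $\dif_{\ell}u^{\varepsilon,\delta}_{\ell}=F^{\varepsilon,\delta}_{\ell}$: the right-hand side contains $\psi_{\varepsilon}(|\deri^{1}u_{\ell}|^{2}-g_{\ell}^{2})$ and $\sum_{\kappa}\psi_{\delta}(u_{\ell}-u_{\kappa}-\vartheta_{\ell,\kappa})$, and neither is uniformly bounded in $\varepsilon,\delta$ from the $\hol^{0}$ and $\hol^{1}$ estimates alone (indeed $\psi_{\varepsilon}(s)\sim s/\varepsilon$). The bound \eqref{ap3} is obtained instead by a Bernstein argument for second derivatives, via the auxiliary function $\phi_{\ell}$ in \eqref{D2.1} and Lemma~\ref{D2.0.0}, which exploits that the penalty terms appear with a favorable sign at an interior maximum of $\phi_{\ell}$. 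Since you already cite Lemma~\ref{Lb1}, simply rely on it and drop the Calder\'on--Zygmund step. (Relatedly, note that $u^{\varepsilon}$ itself is only in $\hol^{0,1}(\overline{\set})\cap\sob^{2,\infty}_{\loc}(\set)$, not $\hol^{2,\alpha'}(\overline{\set})$; the switching obstacle already makes \eqref{pc1} a variational inequality, and global classical regularity is available only for the doubly penalized $u^{\varepsilon,\delta}$.)

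Second, the uniqueness sketch does not close as stated. Solutions are only in $\sob^{2,\infty}_{\loc}$, so pointwise second-order information at an interior maximum of $u_{\ell_{0}}-v_{\ell_{0}}$ must come from Bony's maximum principle, not the classical one. More importantly, at such a maximum $x_{0}$ one has $\deri^{1}u_{\ell_{0}}(x_{0})=\deri^{1}v_{\ell_{0}}(x_{0})$, so nothing forces the gradient constraint to be strict for $v_{\ell_{0}}$, and you cannot conclude that the elliptic equality $[c_{\ell_{0}}-\dif_{\ell_{0}}]v_{\ell_{0}}=h_{\ell_{0}}$ holds near $x_{0}$. The paper resolves this with the perturbation $f_{\varrho}=u_{\ell_{0}}-v_{\ell_{0}}-\varrho\, u_{\ell_{0}}$: at its interior maximum $x_{\hat\varrho}$ one gets $|\deri^{1}v_{\ell_{0}}(x_{\hat\varrho})|=(1-\hat\varrho)|\deri^{1}u_{\ell_{0}}(x_{\hat\varrho})|<g_{\ell_{0}}(x_{\hat\varrho})$, which forces the PDE branch for $v_{\ell_{0}}$ in a neighborhood and allows Bony's principle to produce the contradiction $0\geq c_{\ell_{0}}f_{\hat\varrho}+\hat\varrho\, h_{\ell_{0}}>0$. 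Only after this step does the regime-cycling argument using \eqref{l1} kick in. Without the $f_{\varrho}$ device and Bony's principle, the comparison is incomplete.
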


\begin{rem}
	Previous to this work, the HJB equation \eqref{esd5} was studied by Lenhart and Belbas \cite{lenh1983} in 1983, in absence of  $|\deri^{1}u_{\ell}|-g_{\ell}$,  proving that the unique solution to their HJB equation belongs to $\sob^{2,\infty}(\set)$. This HJB equation is related to optimal switching stochastic control problems.  Afterwards, Yamada \cite{Y1988} analyzed the HJB equation for \eqref{vf1}  when  $\vartheta_{\ell,\kappa}=0$ and $g_{\ell}=g$ for $\ell,\kappa\in\mathbb{I}$. This case is an example of a system with loop zero costs  whose  HJB equation  has the form
	\begin{equation}\label{HJB1}
	\max\Big\{\max_{\ell\in\mathbb{I}}\{[c_{\ell}-\dif_{\ell}]\tilde{u}-h_{\ell}\},|\deri^{1}\tilde{u}|-g\Big\}=0 \ \text{in}\ \set,\quad\text{s.t.}\ \tilde{u}=0\ \text{on}\ \partial\set.
	\end{equation}
	Yamada showed that there exists a solution $\tilde{u}$ to \eqref{HJB1} in $\hol^{0,1}(\overline{\set})\cap\sob^{2,\infty}_{\loc}(\set)$ that does not  dependent on the elements of $\mathbb{I}$,  and assuming $g>0$, he guaranteed that $\tilde{u}$ belongs to $\hol^{1}(\set)\cap\hol(\overline{\set})$ and   is the unique viscosity solution to \eqref{HJB1}. Comparing those papers with ours, we see that the results presented here are more general than in \cite{lenh1983} and, under the assumption of  absence of loop zero cost  in the system, we guarantee the unique solution $u$ to \eqref{esd5} in the almost everywhere sense. 
\end{rem} 

 In addition to the statement in \eqref{h0}, we need to assume that the domain set is {convex, }  which will permit to verify that the value function $V$ and the solution $u$ to  \eqref{esd5} agree on $\set$.
{\it\begin{enumerate}[label=(H\arabic*),ref=H\arabic*]
		\setcounter{enumi}{4}
		\item\label{h5} The domain set $\set$ is an open, convex and bounded set such that its boundary $\partial\set$ is of class $\hol^{4,\alpha'}$, with $\alpha'\in(0,1)$ fixed.
\end{enumerate} }
Under assumptions \eqref{h3} and \eqref{a4}--\eqref{h5},  the second main {goal}  obtained in this document is as follows.
\begin{teor}\label{verf2}
	Let $V$ be the value function given by \eqref{vf1}. Then $V_{\tilde{\ell}}(\tilde{x})=u_{\tilde{\ell}}(\tilde{x})$ for $(\tilde{x},\tilde{\ell})\in\overline{\set}\times\mathbb{I}$. 
\end{teor}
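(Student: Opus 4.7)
The plan is to couple the original problem with its $\varepsilon$-penalized absolutely continuous/switching approximation and pass to the limit. By Lemmas \ref{lv1} and \ref{optim1}, the value function $V^{\varepsilon}_{\tilde\ell}$ of the $\varepsilon$-penalized problem coincides with the solution $u^{\varepsilon}_{\tilde\ell}$ of the associated system \eqref{p13.0.1.0}; by Theorem \ref{M1} and the construction carried out in Subsection \ref{pro2}, $u^{\varepsilon}_{\tilde\ell}\to u_{\tilde\ell}$ in a sense strong enough for pointwise convergence in $\overline{\set}$. The theorem will follow from two one-sided estimates.

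First I would establish $u_{\tilde\ell}(\tilde x)\leq V_{\tilde\ell}(\tilde x)$ by a verification/It\^o argument valid for every admissible $(\xi,\varsigma)\in\mathcal{U}\times\mathcal{S}$. Because $u_\ell\in\hol^{0,1}(\overline{\set})\cap\sob^{2,\infty}_{\loc}(\set)$, I would first mollify $u_\ell$ on a slightly shrunk subdomain and apply the generalized It\^o formula to $\expo^{-r(\tmt)}u_{I^{\varsigma}_{\tmt}}(X^{\xi,\varsigma}_{\tmt})$ on each interval $[\tilde\tau_i,\tilde\tau_{i+1})$, then pass to the limit in the mollification. The three HJB inequalities give, respectively: $(c_\ell-\dif_\ell)u_\ell\leq h_\ell$ handles the absolutely continuous part of $X^{\xi,\varsigma}$; $|\deri^{1} u_\ell|\leq g_\ell$ together with the fundamental theorem of calculus along the chord $[X^{\xi,\varsigma}_{\tms-},X^{\xi,\varsigma}_{\tms-}-\mathbb{n}_{\tms}\Delta\zeta_{\tms}]$, which lies in $\set$ by the convexity assumption \eqref{h5}, bounds each jump contribution by $\int_{0}^{1} g_\ell(X^{\xi,\varsigma}_{\tms-}-\lambda\mathbb{n}_{\tms}\Delta\zeta_{\tms})\,\der\lambda\;\Delta\zeta_{\tms}$, matching \eqref{eq0.1.1}; and $u_{\ell_i}\leq u_{\ell_{i+1}}+\vartheta_{\ell_i,\ell_{i+1}}$ absorbs the switching cost at $\tau_{i+1}<\tau$. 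Taking expectation, summing over $i\geq 0$, using $u_\ell=0$ on $\partial\set$ so the exit contribution at $\tau$ vanishes, and letting $i\to\infty$ via $\tau_i\uparrow\infty$ from \eqref{cont.2}, delivers $u_{\tilde\ell}(\tilde x)\leq V_{\xi,\varsigma}(\tilde x,\tilde\ell)$; taking the infimum yields $u_{\tilde\ell}\leq V_{\tilde\ell}$.

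For the converse inequality $u_{\tilde\ell}(\tilde x)\geq V_{\tilde\ell}(\tilde x)$, I would observe that any absolutely continuous control $\zeta^{\varepsilon}$ used in the $\varepsilon$-penalized problem (with rate bounded by $1/\varepsilon$) defines an admissible singular control in $\mathcal{U}$ via $\zeta_{\tmt}=\int_{0}^{\tmt}\dot\zeta^{\varepsilon}_{\tms}\der\tms$ with unit direction $\mathbb{n}$, so $V_{\tilde\ell}(\tilde x)\leq V^{\varepsilon}_{\tilde\ell}(\tilde x)=u^{\varepsilon}_{\tilde\ell}(\tilde x)$. Sending $\varepsilon\downarrow 0$ and invoking the convergence $u^{\varepsilon}\to u$ from Theorem \ref{M1} closes the gap.

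The main obstacle is the rigorous It\^o step in the lower-bound direction: one has to combine $\sob^{2,\infty}_{\loc}$ regularity (so classical It\^o does not apply directly) with the switching jumps of $I^{\varsigma}$ and the possibly non-zero jumps $\Delta\zeta$ of the singular control. My plan is to mollify $u$, restrict to a compactly supported subdomain and localize by stopping times $\rho_{n}\uparrow\tau$, handle the switching discontinuities through the pointwise bound $u_{\ell_i}(X^{\xi,\varsigma}_{\tau_{i+1}})\leq u_{\ell_{i+1}}(X^{\xi,\varsigma}_{\tau_{i+1}})+\vartheta_{\ell_i,\ell_{i+1}}$, and then pass to the limit in the mollification, the localization, and the sum using the \emph{a priori} gradient bound $|\deri^{1}u_\ell|\leq g_\ell\leq\Lambda$ from \eqref{a4} together with dominated convergence; convexity of $\set$ is used precisely so that the chord representation in \eqref{eq0.1.1} combines correctly with the gradient inequality on $\set$.
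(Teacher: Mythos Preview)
Your strategy matches the paper's: prove $V\leq u$ by comparing the $\varepsilon$-penalized optimal control with the original problem (using $l^{\varepsilon}_{\ell}(y,x)\geq g_{\ell}(x)|y|$ so that $V_{\tilde\ell}\leq V_{\xi^{\varepsilon,*},\varsigma^{\varepsilon,*}}\leq \mathcal V_{\xi^{\varepsilon,*},\varsigma^{\varepsilon,*}}=u^{\varepsilon}_{\tilde\ell}$, then $\varepsilon\to 0$), and prove $u\leq V$ by an It\^o/verification argument for an arbitrary admissible $(\xi,\varsigma)$ that uses the three HJB inequalities, convexity of $\set$ for the jump term, the boundary condition, and summation over switching intervals. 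One minor citation: the identification $V^{\varepsilon}=u^{\varepsilon}$ comes from Lemmas \ref{lv1} and \ref{convexu1.0}; Lemma \ref{optim1} only describes the structure of $\mathcal S^{\varepsilon}_{\ell}$.

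The one genuine difference is the technical device in the $u\leq V$ direction. You propose to mollify $u$ and apply It\^o to the mollified function. The paper instead applies It\^o directly to the doubly penalized approximants $u^{\varepsilon_n,\delta_{\hat n}}$, which by Proposition \ref{princ1.0} are $\hol^{4,\alpha'}(\overline{\set})$ and satisfy $[c_\ell-\dif_\ell]u^{\varepsilon_n,\delta_{\hat n}}_\ell\leq h_\ell$ exactly; after taking expectation, the paper passes $\varepsilon_n,\delta_{\hat n}\to 0$ using the uniform-on-compacta convergence \eqref{conv1}--\eqref{econv1}, and only then invokes $|\deri^1 u_\ell|\leq g_\ell$ to turn the gradient bound into the cost term $g_\ell(X_{\tms-})\circ\der\zeta_\tms$. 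This sidesteps the two delicate points in your mollification route: a mollified $u_\ell^\eta$ need not satisfy $(c_\ell-\dif_\ell)u_\ell^\eta\leq h_\ell$ pointwise (the operator has variable coefficients, so mollification does not commute with $\dif_\ell$), and $|\deri^1 u_\ell^\eta|$ is only bounded by the mollified $g_\ell$, not $g_\ell$ itself. Both can be fixed with an extra layer of limits and error control, so your plan is viable, but the paper's choice of working with the already smooth $u^{\varepsilon,\delta}$ is cleaner and avoids introducing a separate mollification parameter.
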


To finalize, let us comment about the assumptions mentioned  above. Under \eqref{h3}--\eqref{h2} and  using the Schaefer fixed point theorem (see, i.e., \cite[Thm. 4 p. 539]{evans2}), for each $\varepsilon,\delta\in(0,1)$ fixed, we guarantee the existence and uniqueness of the classical non-negative solution $u^{\varepsilon,\delta}$ to the NPDS \eqref{NPD.1}; see Proposition \ref{princ1.0} and the appendix for its proof. Also, those assumptions  are required to show some \`a priori estimates of $u^{\varepsilon,\delta}$, which are independent of $\varepsilon,\delta$; see Lemma  \ref{Lb1}. Since \eqref{h3} holds, $c_{\ell}>0$ on $\overline{\set}$ and using Lemma \ref{Lb1}, we verified that $u^{\varepsilon}$ is the unique non-negative solution to the HJB equation \eqref{pc1}; see Subsection \ref{prop1}. Once again making use of \eqref{h3} and $c_{\ell}>0$ on $\overline{\set}$, it is  {proven} that the solution $u$ to the HJB equation \eqref{esd5} is unique; see Subsection \ref{pro2}. Finally, Assumption \eqref{h5} helps to check that the value function $V$, given in  \eqref{Vfp1}, and the solution $u$ to the HJB equation \eqref{esd5} agree on $\overline{\set}$.  

\subsection{$\varepsilon$-penalized absolutely continuous/switching control problem}\label{penal1}
To prove the statements above; Theorems  \ref{M1}--\ref{verf2}, firstly we study an $\varepsilon$-penalized absolutely continuous/switching control problem that is closely related to the value function problem seen previously.  At the same time, the solution to the HJB equation related to this stochastic control problem (see Proposition \ref{princ1.1})  {helps}  to guarantee the existence and regularity  of the solution to the variational inequalities system \eqref{esd5}. The verification  lemma for this part, which is divided into two lemmas (Lemmas \ref{lv1} and \ref{optim1})   will be presented below to provide  the proofs of Theorem \ref{M1} and Proposition \ref{princ1.1}.

Define the penalized controls set $\mathcal{U}^{\varepsilon}$ in the following way
\begin{equation}\label{p1}
\mathcal{U}^{\varepsilon}\eqdef\{ \xi=(\mathbb{n},\zeta)\in\mathcal{U}: \zeta_{\tmt}\ \text{is absolutely continuous,}\ 0\leq\dot{\zeta}_{\tmt}\leq 2C/\varepsilon\}, 
\end{equation} 
with  $\varepsilon\in(0,1)$ fixed, where $C$ is some  fixed positive constant independent of $\varepsilon$. For each $(\tilde{x},\tilde{\ell})\in\overline{\set}\times\mathbb{I}$ and $(\xi,\varsigma)\in\mathcal{U}^{\varepsilon}\times\mathcal{S}$,  the process $X^{\xi, \varsigma}_{\tmt}=\{X^{\xi, \varsigma}_{\tmt}:\tmt\geq0\}$ evolves as 
\begin{align}
&X^{\xi, \varsigma}_{\tmt}=X^{\xi, \varsigma}_{\tilde{\tau}_{i}} -\displaystyle\int_{\tilde{\tau}_{i}}^{\tmt}[b(X^{\xi, \varsigma}_{\tms},\ell_{i})+\mathbb{n}_{\tmt}\dot{\zeta}_{\tmt}]\der \tms+\displaystyle\int_{\tilde{\tau}_{i}}^{\tmt}\sigma(X^{\xi, \varsigma}_{\tms},\ell_{i})\der W_{\tms},\label{esd3.1.0.0}\\
&I_{\tmt}=\ell_{i}\quad\text{for}\ \tmt\in[\tilde{\tau}_{i},\tilde{\tau}_{i+1})\ \text{ and }\ i\geq0, \label{esd3.1.0.1}
\end{align} 
where $\tilde{\tau}_{i}=\tau_{i}\wedge\tau$ and $\tau=\inf\{ \tmt>0:X^{\xi, \varsigma}_{\tmt}\notin\set\}$. Before introducing the corresponding functional cost $\mathcal{V}_{\xi,\varsigma}$  of $(\xi,\varsigma)\in\mathcal{U}^{\varepsilon}\times\mathcal{S}$, let us define the penalization function $\psi_{\varepsilon}$. Consider $\varphi$ as a function from $\R$ to itself that is in $\hol^{\infty}(\R)$ and 
\begin{equation}\label{p12.1}
\begin{split}
&\varphi(t)=0,\  t\leq0,\quad
\varphi(t)>0,\ t>0,\\
&\varphi(t)=t-1,\  t\geq2,\quad\varphi'(t)\geq0,\ \varphi''(t)\geq0.
\end{split}
\end{equation} 
Then, $\psi_{\varepsilon}$ is taken as $\psi_{\varepsilon}(t)=\varphi(t/\varepsilon)$, for each $\varepsilon\in(0,1)$. Also, for each $(x,\ell)\in\overline{\set}\times\mathbb{I}$ fixed, we  define the Legendre transform of $H_{\ell}^{\varepsilon}(\gamma,x)\eqdef H^{\varepsilon}(\gamma,x,\ell)\eqdef\psi_{\varepsilon}(|\gamma|^{2}-g_{\ell}(x)^{2})$ by 
\begin{equation*}
l_{\ell}^{\varepsilon}(y,x)\eqdef l^{\varepsilon}(y,x,\ell)\eqdef \sup_{\gamma\in\R^{d}}\left\{\langle \gamma,y \rangle-H_{\ell}^{\varepsilon}(\gamma,x)\right\},\quad \text{for}\ y\in\R^{d}.
\end{equation*} 
Notice that, for each $(x,\ell)\in\overline{\set}\times\mathbb{I}$ fixed,  $H_{\ell}^{\varepsilon}(\gamma,x)$ is a $\hol^{2}$ and convex function with respect to the variable $\gamma\in\R^{d}$, since $\psi_{\varepsilon}\in\hol^{\infty}(\R)$ is convex. The penalized cost of $(\xi,\varsigma)\in\mathcal{U}^{\varepsilon}\times\mathcal{S}$ is defined by 
\begin{align}\label{pen1}
\mathcal{V}_{\xi,\varsigma}(\tilde{x},\tilde{\ell})&=\sum_{i\geq0}\E_{\tilde{x},\tilde{\ell}}\bigg[\expo^{-r(\tau_{i+1})}\vartheta_{\ell_{i},\ell_{i+1}}\uno_{\{\tau_{i+1}<\tau\}}\bigg]\notag\\
&\quad+\sum_{i\geq0}\E_{\tilde{x},\tilde{\ell}}\bigg[\int_{{\tilde{\tau}}_{i}}^{{\tilde{\tau}}_{i+1}}\expo^{-r(\tms)}[h_{\ell_{i}}(X^{\xi, \varsigma}_{\tms})+l^{\varepsilon}_{\ell_{i}}(\dot{\zeta}_{\tms}\mathbb{n}_{\tms},X^{\xi, \varsigma}_{\tms})]\der \tms\bigg],
\end{align}
The value function for this problem is given by
\begin{equation}\label{Vfp1}
V_{\tilde{\ell}}^{\varepsilon}(\tilde{x})\eqdef\inf_{(\xi,\varsigma)\in\mathcal{U}^{\varepsilon}}\mathcal{V}_{\xi,\varsigma}(\tilde{x},\tilde{\ell}),
\end{equation}
whose  corresponding  HJB equation is
\begin{equation}\label{p13.0.1.0}
\begin{split}
\max\bigg\{[c_{\ell}-\dif_{\ell}]  u_{\ell}^{\varepsilon}+ \sup_{y\in\R^{d}}\{\langle \deri^{1}u_{\ell}^{\varepsilon},y\rangle-l_{\ell}^{\varepsilon}(y,\cdot)\}-h_{\ell},u_{\ell}^{\varepsilon}-\mathcal{M}_{\ell}u^{\varepsilon}\bigg\}&= 0 ,\ \text{in}\ \set,\\  
\text{s.t.}\ u_{\ell}^{\varepsilon}&=0,\ \text{on}\ \partial\set ,
\end{split}
\end{equation}
where $\dif_{\ell}, \mathcal{M}_{\ell}$ are as in \eqref{eq2}, \eqref{p6.0}, respectively. Observe that \eqref{p13.0.1.0} can be rewritten as
\begin{equation}\label{pc1}
\begin{split}
\max\left\{[c_{\ell}-\dif_{\ell}]  u_{\ell}^{\varepsilon}+\psi_{\varepsilon}(|\deri^{1}u_{\ell}^{\varepsilon}|^{2}- g_{\ell}^{2})-h_{\ell},u_{\ell}^{\varepsilon}-\mathcal{M}_{\ell}u^{\varepsilon}\right\}&= 0 ,\ \text{in}\ \set,\\ 
\text{s.t.}\ u_{\ell}^{\varepsilon}&=0,\ \text{on}\ \partial\set,
\end{split}
\end{equation} 
because of 
$H^{\varepsilon}_{\ell}(\gamma,x)=\sup_{y\in\R^{d}}\{\langle \gamma,y\rangle-l^{\varepsilon}_{\ell}(y,x)\}$.
Under assumptions \eqref{h3}--\eqref{h2},   the following result  is obtained,  {whose proof is given in the next section; see Subsection \ref{prop1}.}
\begin{prop}\label{princ1.1}
	For each $\varepsilon\in(0,1)$ fixed, there exists a unique non-negative strong solution $u^{\varepsilon}=(u^{\varepsilon}_{1},\dots,u^{\varepsilon}_{m})$ to the HJB equation  \eqref{pc1} where  $u^{\varepsilon}_{\ell}\in\hol^{0,1}(\overline{\set})\cap\sob^{2,\infty}_{\loc}(\set)$ for each $\ell\in\mathbb{I}$.
\end{prop}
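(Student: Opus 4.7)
The strategy is to construct $u^{\varepsilon}$ as the limit, as a second penalization parameter $\delta\downarrow 0$, of classical solutions to the NPDS \eqref{NPD.1} alluded to in the excerpt, in which the unilateral obstacle term $u^{\varepsilon}_{\ell}-\mathcal{M}_{\ell}u^{\varepsilon}$ appearing in \eqref{pc1} is replaced by a smooth monotone penalty of the form $\delta^{-1}\varphi(\delta^{-1}(u^{\varepsilon,\delta}_{\ell}-\mathcal{M}_{\ell}u^{\varepsilon,\delta}))$ using the same $\varphi$ as in \eqref{p12.1}. For each $(\varepsilon,\delta)\in(0,1)^{2}$ fixed, Proposition \ref{princ1.0}, proved in the appendix via Schaefer's fixed point theorem, supplies a unique classical non-negative solution $u^{\varepsilon,\delta}=(u^{\varepsilon,\delta}_{1},\dots,u^{\varepsilon,\delta}_{m})$ with $u^{\varepsilon,\delta}_{\ell}\in\hol^{2,\alpha'}(\overline{\set})$.

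Next, fix $\varepsilon\in(0,1)$ and pass to the limit $\delta\to 0$. Lemma \ref{Lb1} furnishes \`a priori bounds on $\|u^{\varepsilon,\delta}_{\ell}\|_{\hol^{0,1}(\overline{\set})}$ and on $\|u^{\varepsilon,\delta}_{\ell}\|_{\sob^{2,p}_{\loc}(\set)}$, for every $p<\infty$, that are independent of $\delta$. Combining the Arzel\`a-Ascoli criterion on the Lipschitz-equicontinuous family $\{u^{\varepsilon,\delta}\}_{\delta\in(0,1)}$ with the reflexivity of $\Lp^{p}_{\loc}(\set)$, one extracts a subsequence $\delta_{n}\to 0$ and a limit $u^{\varepsilon}=(u^{\varepsilon}_{1},\dots,u^{\varepsilon}_{m})$, with $u^{\varepsilon}_{\ell}\in\hol^{0,1}(\overline{\set})$, such that $u^{\varepsilon,\delta_{n}}_{\ell}\to u^{\varepsilon}_{\ell}$ uniformly on $\overline{\set}$, $\deri^{1}u^{\varepsilon,\delta_{n}}_{\ell}\to \deri^{1}u^{\varepsilon}_{\ell}$ uniformly on compact subsets of $\set$ (after using a Morrey embedding based on the $\sob^{2,p}_{\loc}$ bound for $p>d$), and $\deri^{2}u^{\varepsilon,\delta_{n}}_{\ell}\rightharpoonup \deri^{2}u^{\varepsilon}_{\ell}$ weakly in $\Lp^{p}_{\loc}(\set)$ for every $p\in(1,\infty)$. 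Uniformity of the bound in $p$ upgrades the regularity to $u^{\varepsilon}_{\ell}\in\sob^{2,\infty}_{\loc}(\set)$. Uniform convergence of the gradient enables passage to the limit in the nonlinear term $\psi_{\varepsilon}(|\deri^{1}u^{\varepsilon,\delta_{n}}_{\ell}|^{2}-g_{\ell}^{2})$ pointwise a.e., while the uniform $\Lp^{p}_{\loc}$ bound on the penalty term forces $u^{\varepsilon}_{\ell}\leq\mathcal{M}_{\ell}u^{\varepsilon}$ a.e. and, jointly with the complementarity that the penalty is asymptotically negligible whenever the PDE part is strictly negative, delivers the full $\max$-equation \eqref{pc1} in the almost-everywhere sense.

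Uniqueness is obtained by a comparison argument using $c_{\ell}>0$ and the no-loop condition \eqref{l1}. Let $u^{\varepsilon},\tilde{u}^{\varepsilon}$ be two non-negative strong solutions of \eqref{pc1} and put $M\eqdef\max_{\ell\in\mathbb{I}}\max_{\overline{\set}}(u^{\varepsilon}_{\ell}-\tilde{u}^{\varepsilon}_{\ell})$, attained at some $(x^{*},\ell^{*})$. If $x^{*}\in\set$ and $u^{\varepsilon}_{\ell^{*}}(x^{*})<\mathcal{M}_{\ell^{*}}u^{\varepsilon}(x^{*})$, the PDE part of \eqref{pc1} is active for $u^{\varepsilon}_{\ell^{*}}$ near $x^{*}$; the Aleksandrov-Bakelman-Pucci estimate for strong solutions combined with $c_{\ell^{*}}>0$ rules out $M>0$. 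Otherwise $u^{\varepsilon}_{\ell^{*}}(x^{*})=u^{\varepsilon}_{\kappa}(x^{*})+\vartheta_{\ell^{*},\kappa}$ for some $\kappa\neq\ell^{*}$, while $\tilde{u}^{\varepsilon}_{\ell^{*}}(x^{*})\leq\tilde{u}^{\varepsilon}_{\kappa}(x^{*})+\vartheta_{\ell^{*},\kappa}$, so the maximum is also attained at $(x^{*},\kappa)$; conditions \eqref{eq1}--\eqref{l1} together with the finiteness of $\mathbb{I}$ preclude an infinite chain of zero-cost switches, so after finitely many iterations one reaches a regime where the PDE part is active, and the previous argument applies. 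Interchanging the roles of $u^{\varepsilon}$ and $\tilde{u}^{\varepsilon}$ yields $u^{\varepsilon}=\tilde{u}^{\varepsilon}$.

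The delicate step, and the main obstacle, is the identification of the limit: deducing not merely the two separate inequalities $[c_{\ell}-\dif_{\ell}]u^{\varepsilon}_{\ell}+\psi_{\varepsilon}(|\deri^{1}u^{\varepsilon}_{\ell}|^{2}-g_{\ell}^{2})-h_{\ell}\leq 0$ and $u^{\varepsilon}_{\ell}-\mathcal{M}_{\ell}u^{\varepsilon}\leq 0$ a.e., but also the complementarity that at almost every point at least one of them is an equality, so that the $\max$-equation actually holds. This requires simultaneously exploiting the a.e. pointwise convergence of $\deri^{1}u^{\varepsilon,\delta_{n}}_{\ell}$ and a careful inspection of the penalty on the set $\{u^{\varepsilon}_{\ell}<\mathcal{M}_{\ell}u^{\varepsilon}\}$, where the monotonicity of the penalty and its uniform $\Lp^{p}_{\loc}$ control force it to vanish in the limit, leaving an a.e. equality in the PDE part.
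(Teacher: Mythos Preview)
Your existence argument is essentially the paper's: pass $\delta\to 0$ in the NPDS \eqref{NPD.1} using the $\delta$-uniform bounds of Lemma~\ref{Lb1}, extract via Arzel\`a--Ascoli and weak $\Lp^{p}$-compactness the convergences \eqref{conv1}, and identify the limit by (i) local boundedness of the $\delta$-penalty forcing $u^{\varepsilon}_{\ell}\leq\mathcal{M}_{\ell}u^{\varepsilon}$, and (ii) vanishing of the penalty on $\{u^{\varepsilon}_{\ell}<\mathcal{M}_{\ell}u^{\varepsilon}\}$ for small $\delta$, yielding the PDE equality there. That part is fine.

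The uniqueness argument, however, has a genuine gap: your case analysis is on the wrong solution. You branch on whether $u^{\varepsilon}_{\ell^{*}}(x^{*})<\mathcal{M}_{\ell^{*}}u^{\varepsilon}(x^{*})$. In that case the PDE equality holds for $u^{\varepsilon}_{\ell^{*}}$, but for $\tilde u^{\varepsilon}_{\ell^{*}}$ you only have the inequality $[c_{\ell^{*}}-\dif_{\ell^{*}}]\tilde u^{\varepsilon}_{\ell^{*}}+\psi_{\varepsilon}(|\deri^{1}\tilde u^{\varepsilon}_{\ell^{*}}|^{2}-g_{\ell^{*}}^{2})\leq h_{\ell^{*}}$; after subtracting and linearizing the $\psi_{\varepsilon}$-term into the drift, $w\eqdef u^{\varepsilon}_{\ell^{*}}-\tilde u^{\varepsilon}_{\ell^{*}}$ is a \emph{super}solution of a linear uniformly elliptic operator with zeroth-order term $c_{\ell^{*}}>0$, and a supersolution may perfectly well have a positive interior maximum---neither ABP nor Bony yields a contradiction. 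Your ``otherwise'' branch also fails: from $u^{\varepsilon}_{\ell^{*}}(x^{*})=u^{\varepsilon}_{\kappa}(x^{*})+\vartheta_{\ell^{*},\kappa}$ and $\tilde u^{\varepsilon}_{\ell^{*}}(x^{*})\leq \tilde u^{\varepsilon}_{\kappa}(x^{*})+\vartheta_{\ell^{*},\kappa}$ one gets only $u^{\varepsilon}_{\ell^{*}}(x^{*})-\tilde u^{\varepsilon}_{\ell^{*}}(x^{*})\geq u^{\varepsilon}_{\kappa}(x^{*})-\tilde u^{\varepsilon}_{\kappa}(x^{*})$, the trivial direction, which does \emph{not} show the maximum is re-attained at $(x^{*},\kappa)$.

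The repair, and the paper's route, is to branch on $\tilde u^{\varepsilon}$. At the maximizer $(x_{0},\ell_{0})$, choosing $\ell_{1}$ with $\mathcal{M}_{\ell_{0}}\tilde u^{\varepsilon}(x_{0})=\tilde u^{\varepsilon}_{\ell_{1}}(x_{0})+\vartheta_{\ell_{0},\ell_{1}}$ and using that $(x_{0},\ell_{0})$ is a maximizer gives
\[
\tilde u^{\varepsilon}_{\ell_{0}}(x_{0})-\mathcal{M}_{\ell_{0}}\tilde u^{\varepsilon}(x_{0})\ \leq\ u^{\varepsilon}_{\ell_{0}}(x_{0})-\mathcal{M}_{\ell_{0}}u^{\varepsilon}(x_{0})\ \leq\ 0.
\]
If $\tilde u^{\varepsilon}_{\ell_{0}}(x_{0})<\mathcal{M}_{\ell_{0}}\tilde u^{\varepsilon}(x_{0})$, the PDE equality holds for $\tilde u^{\varepsilon}_{\ell_{0}}$ near $x_{0}$, $w$ is now a \emph{sub}solution, and Bony's maximum principle together with the continuity of $\psi_{\varepsilon}(|\deri^{1}u^{\varepsilon}_{\ell_{0}}|^{2}-g_{\ell_{0}}^{2})-\psi_{\varepsilon}(|\deri^{1}\tilde u^{\varepsilon}_{\ell_{0}}|^{2}-g_{\ell_{0}}^{2})$ at $x_{0}$ produces the contradiction. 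Otherwise all three terms above vanish; in particular $\tilde u^{\varepsilon}_{\ell_{0}}(x_{0})=\tilde u^{\varepsilon}_{\ell_{1}}(x_{0})+\vartheta_{\ell_{0},\ell_{1}}$ while $u^{\varepsilon}_{\ell_{0}}(x_{0})\leq u^{\varepsilon}_{\ell_{1}}(x_{0})+\vartheta_{\ell_{0},\ell_{1}}$, which now gives the correct direction $u^{\varepsilon}_{\ell_{0}}(x_{0})-\tilde u^{\varepsilon}_{\ell_{0}}(x_{0})\leq u^{\varepsilon}_{\ell_{1}}(x_{0})-\tilde u^{\varepsilon}_{\ell_{1}}(x_{0})$, so the maximum is also attained at $(x_{0},\ell_{1})$ and one iterates until \eqref{l1} is violated.
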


\section{Existence and uniqueness to the HJB equations }\label{euHJB1}

This section is devoted to guarantee existence and uniqueness to the  HJB equations \eqref{esd5} and \eqref{pc1}. The solution $u^{\varepsilon}$ to  \eqref{pc1}, with $\varepsilon\in(0,1)$ fixed,  will be constructed  as the limit of a sequence of functions $\{u^{\varepsilon,\delta}\}_{\delta\in(0,1)}$ which are solutions to the following NPDS  
\begin{equation}\label{NPD.1}
\begin{split}
[c_{\ell}-\dif_{\ell}]  u_{\ell}^{\varepsilon,\delta}+ \psi_{\varepsilon}(|\deri^{1} u_{\ell}^{\varepsilon,\delta}|^{2}- g_{\ell}^{2})+\displaystyle\sum_{\kappa\in\mathbb{I}\setminus\{\ell\}}\psi_{\delta}(u_{\ell}^{\varepsilon,\delta}-u_{\kappa}^{\varepsilon,\delta} -\vartheta_{\ell,\kappa})&=  h_{\ell}  ,\ \text{in}\ \set,\\
\text{s.t.}\ 
u^{\varepsilon,\delta}_{\ell}&=0,\ \text{on}\ \partial\set.
\end{split}
\end{equation}
The Schaefer fixed point theorem is employed to guarantee the existence  of the classic solution $u^{\varepsilon,\delta}$ to \eqref{NPD.1}. For that aim we need some  \textit{\`a priory} estimates of $u^{\varepsilon,\delta}$, which will also help to verify the theorems seen in the section above. 
\begin{rem}\label{R1}
	From now on, we  consider cut-off functions $\omega\in\hol^{\infty}_{\comp}( \set)$  which satisfy $0\leq\omega\leq1$, $\omega=1$ on the open ball $B_{\beta r}\subset B_{\beta'r}\subset\set$ and $\omega=0$ on $\set  \setminus B_{\beta' r}$, with $r>0$, $\beta'= \frac{\beta+1}{2}$ and $\beta\in(0,1]$.  It is  {also} assumed that $||\omega||_{\hol^{2}(\overline{B_{\beta r}})}\leq K_{1}$, where $K_{1}>0$ is a constant independent of $\varepsilon$ and $\delta$.
\end{rem}

 Under assumptions \eqref{h3}--\eqref{h2},   the following results  are obtained. The proofs of them can be seen in the appendix.  
\begin{lema}\label{Lb1}
	Let $ u^{\varepsilon,\delta}=(u_{1}^{\varepsilon,\delta},\dots,u_{m}^{\varepsilon,\delta})$ be a vector solution to the NPDS \eqref{NPD.1}, whose components are in $\hol^{4}(\overline{\set}) $. Then, for each $\ell\in\mathbb{I}$, there exist  positive constants $C_{1},C_{2},C_{3}$ independent of $\varepsilon,\delta$ such that  if $x\in\overline{\set}$, then
	\begin{align}
	&0\leq u_{\ell}^{\varepsilon,\delta}(x)\leq C_{1},\label{ap1}\\
	&|\deri^{1}u_{\ell}^{\varepsilon,\delta}(x)|\leq  C_{2},\label{ap2}\\
	&\omega(x)|\deri^{2}u_{\ell}^{\varepsilon,\delta}(x)|\leq C_{3}.\label{ap3}
	\end{align}
\end{lema}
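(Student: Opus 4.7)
My plan is to establish \eqref{ap1}, \eqref{ap2} and \eqref{ap3} in this order, using the minimum/maximum principle for the first bound and Bernstein-type arguments (with a cut-off in the last step) for the derivative bounds. Throughout I will exploit the structural properties $\psi_\varepsilon,\psi_\delta\geq 0$, $\psi_\varepsilon',\psi_\delta'\geq 0$ and $\psi_\varepsilon'',\psi_\delta''\geq 0$ inherited from \eqref{p12.1}, the uniform ellipticity constant $\theta$ from \eqref{H2}, and the $\hol^{2,\alpha'}$-bounds on $a_\ell,b_\ell,c_\ell,h_\ell,g_\ell$ from \eqref{a4}--\eqref{h2}. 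All constants I produce will depend on $\Lambda,\theta,\set$ and the switching cost table, but not on $\varepsilon,\delta\in(0,1)$.

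For \eqref{ap1} the non-negativity follows from a joint minimum principle applied to the vector $u^{\varepsilon,\delta}$: if the global minimum of $u^{\varepsilon,\delta}_\ell$ over $(\ell,x)$ were negative and attained at an interior pair $(\ell_0,x_0)$, then $\deri^1u^{\varepsilon,\delta}_{\ell_0}(x_0)=0$, $\dif_{\ell_0}u^{\varepsilon,\delta}_{\ell_0}(x_0)\geq 0$, the argument $-g_{\ell_0}(x_0)^2$ of $\psi_\varepsilon$ is non-positive, and for every $\kappa$ the argument $u^{\varepsilon,\delta}_{\ell_0}(x_0)-u^{\varepsilon,\delta}_\kappa(x_0)-\vartheta_{\ell_0,\kappa}$ of $\psi_\delta$ is non-positive (since $(\ell_0,x_0)$ realises the global minimum); every penalty contribution vanishes and the equation yields $c_{\ell_0}(x_0)u^{\varepsilon,\delta}_{\ell_0}(x_0)\geq h_{\ell_0}(x_0)\geq 0$, a contradiction. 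For the uniform upper bound I would compare $u^{\varepsilon,\delta}_\ell$ with the unique classical solution $\bar w_\ell\in \hol^{2,\alpha'}(\overline{\set})$ of the linear Dirichlet problem $[c_\ell-\dif_\ell]\bar w_\ell=h_\ell$ in $\set$, $\bar w_\ell=0$ on $\partial\set$; since $\psi_\varepsilon,\psi_\delta\geq 0$, each component $u^{\varepsilon,\delta}_\ell$ is a subsolution of this linear equation, and the maximum principle gives $u^{\varepsilon,\delta}_\ell\leq \bar w_\ell$, whose norm depends only on $\Lambda,\theta,\set$.

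For the gradient estimate \eqref{ap2} I would split boundary and interior parts. On $\partial\set$, the squeeze $0\leq u^{\varepsilon,\delta}_\ell\leq \bar w_\ell$, together with both functions vanishing there and $\partial\set\in \hol^{4,\alpha'}$, controls the inward normal derivative of $u^{\varepsilon,\delta}_\ell$ by that of $\bar w_\ell$, while tangential derivatives are identically zero. In the interior I would apply Bernstein's method to $\Phi_\ell\eqdef|\deri^1u^{\varepsilon,\delta}_\ell|^2$: differentiate \eqref{NPD.1} in $x_m$, multiply by $\partial_m u^{\varepsilon,\delta}_\ell$, sum over $m$, and invoke the identity $\sum_m \partial_m u\,\dif_\ell\partial_m u=\tfrac12\dif_\ell\Phi_\ell-a_{\ell\,ij}\partial_{im}u\,\partial_{jm}u$. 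Let $(\ell_0,x_0)$ realise $\max_{\ell,x}\Phi_\ell$. Since $|\deri^1u^{\varepsilon,\delta}_\kappa(x_0)|\leq\sqrt{\Phi_{\ell_0}(x_0)}$ for every $\kappa$, the switching contribution $\psi_\delta'(\cdot)(\Phi_{\ell_0}-\langle \deri^1u^{\varepsilon,\delta}_\kappa,\deri^1u^{\varepsilon,\delta}_{\ell_0}\rangle)$ is non-negative and can be dropped; at the maximum $\deri^1\Phi_{\ell_0}(x_0)=0$ and $\dif_{\ell_0}\Phi_{\ell_0}(x_0)\leq 0$. The delicate term $-2\psi_\varepsilon'(\cdot)g_{\ell_0}\langle \deri^1u^{\varepsilon,\delta}_{\ell_0},\deri^1 g_{\ell_0}\rangle$ has an $\varepsilon^{-1}$-singular prefactor; I would handle it by a case split on the argument of $\psi_\varepsilon$. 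If $|\deri^1u^{\varepsilon,\delta}_{\ell_0}(x_0)|^2-g_{\ell_0}(x_0)^2\leq 2\varepsilon$ the bound $\Phi_{\ell_0}(x_0)\leq\|g\|_\infty^2+2$ is automatic; in the complementary region the explicit form $\psi_\varepsilon(t)=t/\varepsilon-1$ gives $\psi_\varepsilon'(\cdot)=(\psi_\varepsilon(\cdot)+1)/(\Phi_{\ell_0}(x_0)-g_{\ell_0}(x_0)^2)$, and $\psi_\varepsilon(\cdot)$ is itself controlled by reading off the undifferentiated equation \eqref{NPD.1}. Combined with the coercivity $a_{\ell\,ij}\partial_{im}u\,\partial_{jm}u\geq\theta|\deri^2u|^2$, the strict positivity of $c_{\ell_0}$ and Young's inequality, this yields $\Phi_{\ell_0}(x_0)\leq C_2^2$.

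For \eqref{ap3} I would run Bernstein a second time, now on $\Psi_\ell\eqdef\omega^2|\deri^2u^{\varepsilon,\delta}_\ell|^2$ with the cut-off $\omega$ from Remark \ref{R1}: differentiate \eqref{NPD.1} twice, contract with $\omega^2\partial_{ij}u^{\varepsilon,\delta}_\ell$ and sum. Uniform ellipticity delivers the coercive $\theta^2\omega^2|\deri^3u|^2$ term; the convexity $\psi_\varepsilon'',\psi_\delta''\geq 0$ provides additional manifestly non-negative quadratic contributions that I can discard; and the previously established \eqref{ap1}--\eqref{ap2}, together with \eqref{a4}--\eqref{h2}, control every lower-order piece involving $\|h_\ell\|_{\hol^2},\|g_\ell\|_{\hol^2}$ and the coefficients of $\dif_\ell$. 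I expect this last step to be the main obstacle, because the factors $\psi_\varepsilon'(\cdot),\psi_\varepsilon''(\cdot)$ blow up at rates $\varepsilon^{-1},\varepsilon^{-2}$ (and similarly for $\psi_\delta$ in $\delta$): the crucial bookkeeping is to verify, through the same sign/cancellation strategy used in the gradient step applied now to the second derivatives of $|\deri^1u|^2-g_\ell^2$ and of $u_\ell-u_\kappa-\vartheta_{\ell,\kappa}$, that each $\varepsilon$- or $\delta$-singular prefactor ends up multiplying a quantity that is either manifestly non-negative (and dismissable) or already controlled via \eqref{ap2} in the analogous case split. The residual non-sign-definite terms involve only $\deri\omega,\deri^2\omega$ and at most two derivatives of $g_\ell^2$, $h_\ell$ and the coefficients of $\dif_\ell$; I absorb them by Young's inequality against $\theta^2\omega^2|\deri^3u|^2$ and $c_\ell\Psi_\ell$ using $\|\omega\|_{\hol^2}\leq K_1$, obtaining the pointwise bound $\omega(x)|\deri^2u^{\varepsilon,\delta}_\ell(x)|\leq C_3$.
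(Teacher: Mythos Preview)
Your plan for \eqref{ap1} matches the paper. For \eqref{ap2} the paper uses a different auxiliary function, $w_\ell=|\deri^1 u_\ell|^2-\lambda A_{\varepsilon,\delta}u_\ell$ with $A_{\varepsilon,\delta}=\max_{\ell,x}|\deri^1 u_\ell|$: the extra $-\lambda A u_\ell$ brings in, via \eqref{NPD.1}, a term $\lambda A\psi_\varepsilon$ which by convexity ($\psi_\varepsilon(r)\leq r\psi_\varepsilon'(r)$) converts into $\lambda A\psi_\varepsilon'\cdot(|\deri^1 u_\ell|^2-g_\ell^2)$; combined with the identity $\langle\deri^1 u_\ell,\deri^1|\deri^1 u_\ell|^2\rangle=\lambda A|\deri^1 u_\ell|^2$ at the maximum, this yields the absorbing term $-\psi_\varepsilon'\lambda A[|\deri^1 u_\ell|^2+g_\ell^2]$. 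Your bootstrap alternative (bound $\psi_\varepsilon$ via the undifferentiated equation, then feed back) can in principle be made to work, but the final step is missing from your sketch: you must insert the Bernstein control $|\deri^2 u(x_0)|\lesssim M$ back into \eqref{NPD.1} to obtain $(M^2-g_{\ell_0}^2)/\varepsilon\lesssim M$, and only then conclude $M$ bounded. The appeal to ``strict positivity of $c_{\ell_0}$'' plays no role with $\Phi_\ell=|\deri^1 u_\ell|^2$ alone.

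Your plan for \eqref{ap3} contains a genuine error. After differentiating \eqref{NPD.1} twice and contracting with $\omega^2\partial_{ij}u_\ell$, the $\psi_\varepsilon''$ contribution is $\psi_\varepsilon''\,\omega^2\langle(\deri^2 u_\ell)\bar\eta_\ell,\bar\eta_\ell\rangle$ with $\bar\eta_\ell=\deri^1(|\deri^1 u_\ell|^2-g_\ell^2)$, and analogously for $\psi_\delta''$. These are \emph{not} ``manifestly non-negative quadratic contributions'': the Hessian $\deri^2 u_\ell$ has no sign, so neither does this quadratic form. Since $\psi_\varepsilon''$ can be of order $\varepsilon^{-2}$ and $|\bar\eta_\ell|$ is of order $|\deri^2 u_\ell|$ after \eqref{ap2}, you are left with an uncontrolled term of size $\varepsilon^{-2}\omega^2|\deri^2 u_\ell|^3$ that no Young inequality against $\theta\omega^4|\deri^3 u_\ell|^2$ will absorb. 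The $\psi_\varepsilon'$ piece has the same defect: it contains $2\psi_\varepsilon'\omega^2\sum_k\langle(\deri^2 u_\ell)\deri^1\partial_k u_\ell,\deri^1\partial_k u_\ell\rangle$, again sign-indefinite with an $\varepsilon^{-1}$ prefactor. The paper's repair (Lemma~\ref{D2.0.0}) is to enlarge the auxiliary function to
\[
\phi_\ell=\omega^2|\deri^2 u_\ell|^2+\lambda A^1_{\varepsilon,\delta}\,\omega\tr[\alpha_{\ell_0}\deri^2 u_\ell]+\mu|\deri^1 u_\ell|^2,
\]
with $A^1_{\varepsilon,\delta}=\max_{\ell,x}\omega|\deri^2 u_\ell|$, $\alpha_{\ell_0}=a_{\ell_0}(x_0)$ frozen at the maximiser of $\omega|\deri^2 u|$, and $\lambda\geq 2/\theta$. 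After contracting, every offending quadratic form has its matrix replaced by $2\omega^2\deri^2 u_\ell+\lambda A^1\omega\alpha_{\ell_0}$, which is positive semi-definite by \eqref{H2} because $\lambda\theta A^1\omega\geq 2\omega^2|\deri^2 u_\ell|$; see \eqref{d4}--\eqref{d5}. Only then are the $\psi''$ terms dismissible, and the $\psi_\varepsilon'$ term delivers the coercive piece $2\psi_\varepsilon'\omega A^1[\lambda\theta-2]|\deri^2 u_\ell|^2$ that closes the estimate uniformly in $\varepsilon,\delta$.
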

\begin{prop}\label{princ1.0}
	Let $\varepsilon,\delta\in(0,1)$ be fixed. There exists a unique non-negative solution $u^{\varepsilon,\delta}=(u^{\varepsilon,\delta}_{1},\dots,u^{\varepsilon,\delta}_{m})$ to the NPDS  \eqref{NPD.1} where  $u^{\varepsilon,\delta}_{\ell}\in\hol^{4,\alpha'}(\overline{\set})$ for each $\ell\in\mathbb{I}$.
\end{prop}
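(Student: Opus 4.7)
The plan is to obtain existence through Schaefer's fixed point theorem applied to a decoupled linearization of \eqref{NPD.1}, bootstrap regularity up to $\hol^{4,\alpha'}(\overline{\set})$ by iterated Schauder estimates, and settle non-negativity and uniqueness with maximum-principle arguments that exploit the signs and monotonicities of $\psi_{\varepsilon}$, $\psi_{\delta}$ together with $c_{\ell}>0$.

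Fix $\beta\in(0,\alpha')$ and, for $v=(v_{1},\ldots,v_{m})\in\hol^{1,\beta}(\overline{\set})^{m}$, define $T(v)=(u_{1},\ldots,u_{m})$ by solving the uncoupled linear Dirichlet problems
\begin{equation*}
[c_{\ell}-\dif_{\ell}]u_{\ell}=h_{\ell}-\psi_{\varepsilon}(|\deri^{1}v_{\ell}|^{2}-g_{\ell}^{2})-\sum_{\kappa\neq\ell}\psi_{\delta}(v_{\ell}-v_{\kappa}-\vartheta_{\ell,\kappa})\ \text{in}\ \set,\ u_{\ell}=0\ \text{on}\ \partial\set.
\end{equation*}
Assumptions \eqref{h0}, \eqref{a4} and \eqref{h2} give uniform ellipticity, $\hol^{2,\alpha'}$-coefficients and a right-hand side in $\hol^{0,\beta}(\overline{\set})$, so classical Schauder theory (\cite{gilb}) produces a unique $u_{\ell}\in\hol^{2,\beta}(\overline{\set})$ depending continuously on $v$. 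Since $\hol^{2,\beta}\hookrightarrow\hol^{1,\beta}$ is compact, $T$ is a compact continuous self-map of $\hol^{1,\beta}(\overline{\set})^{m}$. If $u=\lambda T(u)$ for some $\lambda\in[0,1]$, then $u$ satisfies the system \eqref{NPD.1} with its $\psi_{\varepsilon}$, $\psi_{\delta}$ and $h_{\ell}$ terms all multiplied by $\lambda$; because Lemma \ref{Lb1} relies only on the monotonicity and sign of $\psi_{\varepsilon}$, $\psi_{\delta}$, the positivity of $c_{\ell}$, and a Bernstein-type computation, its proof applies unchanged to this family and yields bounds independent of $\lambda$, $\varepsilon$, $\delta$. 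A routine $\Lp^{p}$-bootstrap together with Morrey's embedding upgrades these to a uniform $\hol^{1,\beta}$-bound, so \cite[Thm.~4, p.~539]{evans2} delivers a fixed point $u^{\varepsilon,\delta}\in\hol^{2,\beta}(\overline{\set})^{m}$, which is a solution of \eqref{NPD.1}. Since $h_{\ell},g_{\ell}\in\hol^{2,\alpha'}$ and $\psi_{\varepsilon},\psi_{\delta}\in\hol^{\infty}$, one further Schauder iteration bootstraps the regularity first to $\hol^{3,\alpha'}$ and then to $\hol^{4,\alpha'}(\overline{\set})$.

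For non-negativity, suppose $M_{-}:=\min_{\ell,x}u^{\varepsilon,\delta}_{\ell}(x)<0$. The zero Dirichlet data force an interior minimizer $(x_{0},\ell_{0})$, at which $\deri^{1}u^{\varepsilon,\delta}_{\ell_{0}}(x_{0})=0$, $\dif_{\ell_{0}}u^{\varepsilon,\delta}_{\ell_{0}}(x_{0})\geq 0$, $|\deri^{1}u^{\varepsilon,\delta}_{\ell_{0}}|^{2}-g_{\ell_{0}}^{2}\leq 0$, and $u^{\varepsilon,\delta}_{\ell_{0}}(x_{0})-u^{\varepsilon,\delta}_{\kappa}(x_{0})-\vartheta_{\ell_{0},\kappa}\leq 0$ for every $\kappa$; by \eqref{p12.1} both penalisation contributions vanish, so the equation forces $c_{\ell_{0}}(x_{0})M_{-}\geq h_{\ell_{0}}(x_{0})\geq 0$, contradicting $c_{\ell_{0}}>0$.

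For uniqueness, let $u$, $\tilde{u}$ be two solutions and set $M_{+}=\sup_{\ell,x}(u_{\ell}-\tilde{u}_{\ell})(x)$. If $M_{+}>0$, the boundary data place the maximizer $(x_{0},\ell_{0})$ in the interior. There $\deri^{1}u_{\ell_{0}}(x_{0})=\deri^{1}\tilde{u}_{\ell_{0}}(x_{0})$, so the $\psi_{\varepsilon}$-terms cancel, and $\tr[a_{\ell_{0}}\deri^{2}(u_{\ell_{0}}-\tilde{u}_{\ell_{0}})](x_{0})\leq 0$ by \eqref{H2}; hence $[c_{\ell_{0}}-\dif_{\ell_{0}}](u_{\ell_{0}}-\tilde{u}_{\ell_{0}})(x_{0})\geq c_{\ell_{0}}(x_{0})M_{+}>0$. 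Because $(u_{\ell_{0}}-\tilde{u}_{\ell_{0}})(x_{0})=M_{+}\geq(u_{\kappa}-\tilde{u}_{\kappa})(x_{0})$ for all $\kappa$, the monotonicity of $\psi_{\delta}$ makes the switching sum no smaller for $u$ than for $\tilde{u}$, so subtracting the two equations at $x_{0}$ produces a quantity that is simultaneously equal to zero and bounded below by $c_{\ell_{0}}(x_{0})M_{+}>0$, a contradiction. The symmetric argument on $\min_{\ell,x}(u_{\ell}-\tilde{u}_{\ell})$ then gives $u=\tilde{u}$. The step I expect to be most delicate is verifying that the Bernstein-type second-order estimate of Lemma \ref{Lb1} survives along the homotopy $u=\lambda T(u)$ with $\lambda$-independent constants; the remaining ingredients are a coordinated but standard application of Schauder theory, the strong maximum principle, and Schaefer's theorem.
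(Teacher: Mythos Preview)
Your proposal is correct and follows essentially the same route as the paper: Schaefer's fixed point theorem on a H\"older space via the decoupled linear map $T$, the a priori bounds of Lemma~\ref{Lb1} along the homotopy $u=\lambda T(u)$, an $\Lp^{p}$/Schauder bootstrap to $\hol^{4,\alpha'}$, and identical maximum-principle arguments for non-negativity and uniqueness. Regarding your flagged concern, the paper handles it exactly as you surmise---one first bootstraps any $u=\lambda T(u)$ to $\hol^{3,\alpha'}$ (so the Bernstein differentiation is legitimate) and then observes that the proofs of \eqref{ap1}--\eqref{ap2} go through verbatim for the $\lambda$-scaled right-hand side, since the only structural features used are $\psi_{\varepsilon},\psi_{\delta}\geq 0$, their monotonicity/convexity, $h_{\ell}\geq 0$, and $c_{\ell}>0$, all of which are unaffected by the factor $\lambda\in[0,1]$.
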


\begin{rem}
The NPDS \eqref{NPD.1} has been studied in a number of similar problems.  {One of them is when the second term on the  left-hand {side } of \eqref{NPD.1}  does not appear. This equation}  was considered by Lenhart and Belbas in 1983 \cite{lenh1983} to study a HJB equation related to a switching stochastic  control problem when there is no loop of  zero cost. Later, Yamada \cite{Y1988} used {a NPDS similar to} \eqref{NPD.1}, to study the HJB equation \eqref{HJB1}.  {This type of equations  {appears also} in some stochastic singular control problems; see \cite{KM2019} and the references therein.}  
\end{rem}

\subsection{Proof of proposition \ref{princ1.1}}\label{prop1}
Let  $\varepsilon\in(0,1)$ and $\ell\in\mathbb{I}$ be  fixed. Since $u^{\varepsilon,\delta}_{\ell}, \ \partial_{ij}u^{\varepsilon,\delta}_{\ell}$ are bounded, uniformly in $\delta$, on the spaces $(\hol^{1}(\overline{\set}),||\cdot||_{\hol^{1}(\set)})$ and $(\hol(\overline{B_{ r}}),||\cdot||_{\hol(B_{  r})})$, respectively, where $B_{r}\subset\set$; see Lemma \ref{Lb1}, and using Arzel\`a-Ascoli compactness criterion (see \cite[p. 718]{evans2}) and that for each $p\in(1,\infty)$, $(\Lp^{p}(B_{\beta r}),||\cdot||_{\Lp^{p}(B_{r})})$, with $B_{r}\subset\set$, is a reflexive space (see \cite[Thm. 2.46, p. 49]{adams}), it can be proven that there exists a sub-sequence $\{u^{\varepsilon,\delta_{\hat{n}}}_{\ell}\}_{\hat{n}\geq1}$ of $\{u^{\varepsilon,\delta}_{\ell}\}_{\delta\in(0,1)}$  and $u^{\varepsilon}_{\ell}\in\hol^{0,1}(\overline{\set})\cap\sob^{2,\infty}_{\loc}(\set)$ such that
\begin{equation}
\begin{split} \label{conv1}
&\text{$u^{\varepsilon,\delta_{\hat{n}}}_{\ell}\underset{\delta_{\hat{n}}\rightarrow0}{\longrightarrow}u^{\varepsilon}_{\ell}$ in $\hol(\overline{\set})$, $\partial_{i}u^{\varepsilon,\delta_{\hat{n}}}_{\ell}\underset{\delta_{\hat{n}}\rightarrow0}{\longrightarrow}\partial_{i} u^{\varepsilon}_{\ell}$ $\text{in}\ \hol_{\loc}(\set)$,}\\ &\text{$\partial_{ij}u^{\varepsilon,\delta_{\hat{n}}}_{\ell}\underset{\delta_{\hat{n}}\rightarrow0}{\longrightarrow}\partial_{ij}u^{\varepsilon}_{\ell}$,  weakly $ \Lp^{p}_{\loc}(\set)$, for each $p\in(1,\infty)$.}
\end{split}
\end{equation}

We proceed to prove Proposition \ref{princ1.1}.

\begin{proof}[Proof of Proposition \ref{princ1.1}. Existence.]  Taking $\kappa\in\mathbb{I}\setminus\{\ell\}$,  Using \eqref{eq2}, \eqref{NPD.1} and Lemma \ref{Lb1}, we have that $\psi_{\delta}(u^{\varepsilon,\delta}_{\ell}-u^{\varepsilon,\delta}_{\kappa} -\vartheta_{\ell,\kappa})$ is locally bounded, uniformly in $\delta$. From here and \eqref{conv1}, it yields that  $u^{\varepsilon}_{\ell}-u^{\varepsilon}_{\kappa} -\vartheta_{\ell,\kappa}\leq0$ in $\set$. Then, 
\begin{equation}\label{ineq2}
u^{\varepsilon}_{\ell}-\mathcal{M}_{\ell}u^{\varepsilon}\leq0,\quad \text{in}\ \set. 
\end{equation}
Note that the previous inequality is true on the boundary set $\partial\set$, since $u^{\varepsilon,\delta}_{\ell}=u^{\varepsilon,\delta}_{\kappa}=0$ on $\partial\set$ and $\vartheta_{\ell,\kappa}\geq0$. Recall that the operator $\mathcal{M}_{\ell}$ is defined in \eqref{p6.0}. On the other hand, since $u^{\varepsilon,\delta_{\hat{n}}}_{\ell}$ is the unique solution to \eqref{NPD.1},  when $\delta=\delta_{\hat{n}}$, it follows that 
\begin{equation}\label{ineq1}
\int_{B_{r}}\Big\{[c_{\ell}-\dif_{\ell}]  u_{\ell}^{\varepsilon,\delta_{\hat{n}}}+ \psi_{\varepsilon}(|\deri^{1} u_{\ell}^{\varepsilon,\delta_{\hat{n}}}|^{2}- g_{\ell}^{2})\Big\}\varpi\der x\leq  \int_{B_{r}}h_{\ell}\varpi\der x  ,\quad\text{for}\ \varpi\in\mathcal{B}(B_{r}), 
\end{equation}
where
\begin{equation}\label{c1}
\mathcal{B}(A)\eqdef\{\varpi\in \hol^{\infty}_{\comp}(A): \varpi\geq0\ \text{and}\ \sop[\varpi]\subset A\subset\set \}.
\end{equation}
By \eqref{conv1} and  letting $\delta_{\hat{n}}\rightarrow0$ in \eqref{ineq1}, we obtain that 
\begin{equation}\label{ineq3}
[c_{\ell}-\dif_{\ell}]  u_{\ell}^{\varepsilon}+ \psi_{\varepsilon}(|\deri^{1} u_{\ell}^{\varepsilon}|^{2}- g_{\ell}^{2})\leq h_{\ell}\quad  \text{a.e. in $\set$}. 
\end{equation}
From \eqref{ineq2} and \eqref{ineq3},  $\max\left\{[c_{\ell}-\dif_{\ell}]  u_{\ell}^{\varepsilon}+\psi_{\varepsilon}(|\deri^{1} u_{\ell}^{\varepsilon}|^{2}- g_{\ell}^{2})-h_{\ell},u^{\varepsilon}_{\ell}-\mathcal{M}_{\ell}u^{\varepsilon}\right\}\leq 0$  a.e. in $\set$. We shall prove that if 
\begin{equation}\label{ineq5}
u^{\varepsilon}_{\ell}(x^{*})-\mathcal{M}_{\ell}u^{\varepsilon}(x^{*})<0,\quad \text{for some}\ x^{*}\in\set,
\end{equation}
then, there exists a  neighborhood $\mathcal{N}_{x^{*}}\subset\set$  of  $x^{*}$ such that
\begin{equation}\label{ineq6}
[c_{\ell}-\dif_{\ell}]  u_{\ell}^{\varepsilon}+\psi_{\varepsilon}(|\deri^{1} u_{\ell}^{\varepsilon}|^{2}- g_{\ell}^{2})= h_{\ell},\quad  \text{a.e. in $\mathcal{N}_{x^{*}}$.}
\end{equation}
{Assume} \eqref{ineq5} holds. Then, taking  $\kappa\in\mathbb{I}\setminus\{\ell\}$, we see that $u^{\varepsilon}_{\ell}-u^{\varepsilon}_{\kappa} -\vartheta_{\ell,\kappa}\leq u^{\varepsilon}_{\ell}-\mathcal{M}_{\ell}u^{\varepsilon}<0$ at $x^{*}$. Since  $u^{\varepsilon}_{\ell}-u^{\varepsilon}_{\kappa} $ is a continuous function, there exists a ball $B_{r_{\kappa}}\subset\set$ such that  $x^{*}\in B_{r_{\kappa}}$ and $u^{\varepsilon}_{\ell}-u^{\varepsilon}_{\kappa} -\vartheta_{\ell,\kappa}<0$ in $B_{r_{\kappa}}$. From here and defining $\mathcal{N}_{x^{*}}$ as $\bigcap_{\kappa\in\mathbb{I}\setminus\{\ell\}}B_{r_{\kappa}}$, 
we have that $\mathcal{N}_{x^{*}}\subset\set$ is a neighborhood of $x^{*}$ and 
\begin{equation}\label{ineq7}
u^{\varepsilon}_{\ell}-u^{\varepsilon}_{\kappa} -\vartheta_{\ell,\kappa}<0,\quad \text{ in}\ \mathcal{N}_{x^{*}},\ \text{for}\ \kappa\in\mathbb{I}\setminus\{\ell\}.
\end{equation}
Meanwhile, observe that
\begin{equation}\label{ineq8}
||u^{\varepsilon,\delta_{\hat{n}}}_{\ell}-u^{\varepsilon,\delta_{\hat{n}}}_{\kappa} -(u^{\varepsilon}_{\ell}-u^{\varepsilon}_{\kappa} )||_{\hol(\set)}\underset{\delta_{\hat{n}}\rightarrow0}{\longrightarrow}0,\quad\text{for}\ \kappa\in\mathbb{I}\setminus\{\ell\},
\end{equation}
since \eqref{conv1} holds. Then, by \eqref{ineq7}--\eqref{ineq8}, it yields that for each $\kappa\in\mathbb{I}\setminus\{\ell\}$, there exists a $\delta^{(\kappa)}\in(0,1)$ such that if $\delta_{\hat{n}}\leq\delta^{(\kappa)}$, 
$u^{\varepsilon,\delta_{\hat{n}}}_{\ell}-u^{\varepsilon,\delta_{\hat{n}}}_{\kappa} -\vartheta_{\ell,\kappa}<0$ in $\mathcal{N}_{x^{*}}$. Taking $\delta'\eqdef\min_{\kappa\in\mathbb{I}\setminus\{\ell\}}\{\delta^{(\kappa)}\}$, it follows that $u^{\varepsilon,\delta_{\hat{n}}}_{\ell}-u^{\varepsilon,\delta_{\hat{n}}}_{\kappa} -\vartheta_{\ell,\kappa}<0$ in $\mathcal{N}_{x^{*}}$, for all $\delta_{\hat{n}}\leq\delta'$ and $\kappa\in\mathbb{I}\setminus\{\ell\}$. From here and since for each $\delta_{\hat{n}}\leq\delta'$, $u^{\varepsilon,\delta_{\hat{n}}}_{\ell}$ is the unique solution  to \eqref{NPD.1}, when $\delta=\delta_{\hat{n}}$, it implies that 
\begin{equation*}
\int_{\mathcal{N}_{x^{*}}}\Big\{[c_{\ell}-\dif_{\ell}]  u_{\ell}^{\varepsilon,\delta_{\hat{n}}}+ \psi_{\varepsilon}(|\deri^{1} u_{\ell}^{\varepsilon,\delta_{\hat{n}}}|^{2}- g_{\ell}^{2})\Big\}\varpi\der x=  \int_{\mathcal{N}_{x^{*}}}h_{\ell}\varpi\der x  , ,\quad\text{for}\ \varpi\in\mathcal{B}(\mathcal{N}_{x^{*}}).
\end{equation*}
Therefore,  \eqref{ineq6} holds. Hence, we get that for each $\varepsilon\in(0,1)$, $u^{\varepsilon}=(u^{\varepsilon}_{1},\dots,u^{\varepsilon}_{m})$ is a solution the HJB equation \eqref{p13.0.1.0}.
\end{proof}

\begin{proof}[Proof of Proposition \ref{princ1.1}. Uniqueness.] Let $\varepsilon\in(0,1)$ be fixed. Suppose that  $u^{\varepsilon}=(u^{\varepsilon}_{1},\dots,u^{\varepsilon}_{m})$ and $v^{\varepsilon}=(v^{\varepsilon}_{1},\dots,v^{\varepsilon}_{m})$ are two solutions to the HJB equation \eqref{pc1} whose components belong to  $\hol^{0,1}(\overline{\set})\cap\sob^{2,\infty}_{\loc}(\set)$. Take $(x_{0},\ell_{0})\in\overline{\set}\times\mathbb{I}$ such that
\begin{equation}\label{ineq9}
u^{\varepsilon}_{\ell_{0}}(x_{0})-v^{\varepsilon}_{\ell_{0}}(x_{0})=\sup_{(x,\ell)\in\overline{\set}\times\mathbb{I}}\{u^{\varepsilon}_{\ell}(x)-v_{\ell}^{\varepsilon}(x)\}.
\end{equation}
Notice that by \eqref{ineq9}, we only need to verify that 
\begin{equation}\label{ineq10}
u^{\varepsilon}_{\ell_{0}}(x_{0})-v^{\varepsilon}_{\ell_{0}}(x_{0})\leq0,
\end{equation}
which is trivially true,  if $x_{0}\in\partial\set$, since $u^{\varepsilon}_{\ell_{0}}-v^{\varepsilon}_{\ell_{0}}=0$ on $\partial\set$. Let us assume $x_{0}\in\set$. We shall verify \eqref{ineq10} by contradiction. Suppose that $u^{\varepsilon}_{\ell_{0}}-v^{\varepsilon}_{\ell_{0}}>0$ at $x_{0}$. Then, by continuity of $u^{\varepsilon}_{\ell_{0}}-v^{\varepsilon}_{\ell_{0}}$, there exists a  ball $B_{r_{1}}(x_{0})\subset\set$ such that 
\begin{equation}\label{ineq10.1}
c_{\ell_{0}}[u^{\varepsilon}_{\ell_{0}}-v^{\varepsilon}_{\ell_{0}}]\geq\min_{x\in B_{r_{1}}(x_{0})}\{c_{\ell_{0}}(x)[u^{\varepsilon}_{\ell_{0}}(x)-v^{\varepsilon}_{\ell_{0}}(x)]\}>0,\quad  \text{in}\ B_{r_{1}}(x_{0}).
\end{equation}
The last inequality is true because of $c_{\ell_{0}}>0$ in $\set$. Taking $\ell_{1}\in\mathbb{I}$ such that 
\begin{equation}\label{ineq10.0}
\mathcal{M}_{\ell_{0}}v^{\varepsilon}(x_{0})=v^{\varepsilon}_{\ell_{1}}(x_{0} )+\vartheta_{\ell_{0},\ell_{1}},
\end{equation}
by \eqref{pc1} and \eqref{ineq9}, we get that
$v^{\varepsilon}_{\ell_{0}}-(v^{\varepsilon}_{\ell_{1}}+\vartheta_{\ell_{0},\ell_{1}})
=v^{\varepsilon}_{\ell_{0}}-\mathcal{M}_{\ell_{0}}v^{\varepsilon}\leq u^{\varepsilon}_{\ell_{0}}-\mathcal{M}_{\ell_{0}}u^{\varepsilon}\leq 0$ at $x_{0}$.
If $v^{\varepsilon}_{\ell_{0}}(x_{0})-\mathcal{M}_{\ell_{0}}v^{\varepsilon}(x_{0})<0$, there exists a ball $B_{r_{2}}(x_{0})\subset\set$ such that $v^{\varepsilon}_{\ell_{0}}-\mathcal{M}_{\ell_{0}}v^{\varepsilon}<0$ in $B_{r_{2}}(x_{0})$. Moreover, from \eqref{pc1},
\begin{equation}\label{ineq11}
\begin{split}
[c_{\ell_{0}}-\dif_{\ell_{0}}]  v_{\ell_{0}}^{\varepsilon}+\psi_{\varepsilon}(|\deri^{1} v_{\ell_{0}}^{\varepsilon}|^{2}- g_{\ell_{0}}^{2})-h_{\ell_{0}}&=0,\\
[c_{\ell_{0}}-\dif_{\ell_{0}}]  u_{\ell_{0}}^{\varepsilon}+\psi_{\varepsilon}(|\deri^{1} u_{\ell_{0}}^{\varepsilon}|^{2}- g_{\ell_{0}}^{2})-h_{\ell_{0}}&\leq0,
\end{split}
\quad\text{in}\ B_{r_{2}}(x_{0}).
\end{equation}
Notice that $\psi_{\varepsilon}(|\deri^{1} u_{\ell_{0}}^{\varepsilon}|^{2}- g_{\ell_{0}}^{2})-\psi_{\varepsilon}(|\deri^{1} v_{\ell_{0}}^{\varepsilon}|^{2}- g_{\ell_{0}}^{2})$ is a continuous function in $\set$  due to $\partial_{i}u^{\varepsilon}_{\ell_{0}},\partial_{i}v^{\varepsilon}_{\ell_{0}}\in\hol^{0}(\set)$, which satisfies 
$\psi_{\varepsilon}(|\deri^{1} u_{\ell_{0}}^{\varepsilon}|^{2}- g_{\ell_{0}}^{2})-\psi_{\varepsilon}(|\deri^{1} v_{\ell_{0}}^{\varepsilon}|^{2}- g_{\ell_{0}}^{2})=0$ at $x_{0}$, since $x_{0}$ is the point where $u^{\varepsilon}_{\ell_{0}}-v^{\varepsilon}_{\ell_{0}}$ attains its maximum. Meanwhile, by Bony's maximum principle (see \cite{lions}), it is known that for every $r\leq r_{3}$, with $r_{3}>0$ small enough,
\begin{equation}\label{ineq11.0}
\tr[a_{\ell_{0}}\deri^{2}[u^{\varepsilon}_{\ell_{0}}-v^{\varepsilon}_{\ell_{0}}]]\leq0, \quad\text{a.e. in}\ B_{r}(x_{0}). 
\end{equation}
So, from \eqref{ineq10.1}, \eqref{ineq11} and \eqref{ineq11.0}, it yields that for every $r\leq \hat{r}\eqdef\min\{r_{1},r_{2},r_{3}\}$,
\begin{align*}
0&\geq \tr[a_{\ell_{0}}\deri^{2}[u^{\varepsilon}_{\ell_{0}}-v^{\varepsilon}_{\ell_{0}}]]\notag\\
&\geq  c_{\ell_{0}}[u^{\varepsilon}_{\ell_{0}}-v^{\varepsilon}_{\ell_{0}}]+\langle b_{\ell_{0}},\deri^{1}[u^{\varepsilon}_{\ell_{0}}-v^{\varepsilon}_{\ell_{0}}]\rangle+\psi_{\varepsilon}(|\deri^{1} u_{\ell_{0}}^{\varepsilon}|^{2}- g_{\ell_{0}}^{2})-\psi_{\varepsilon}(|\deri^{1} v_{\ell_{0}}^{\varepsilon}|^{2}- g_{\ell_{0}}^{2})\notag\\
&\geq \min_{x\in B_{r_{1}}(x_{0})}\{c_{\ell_{0}}(x)[u^{\varepsilon}_{\ell_{0}}(x)-v^{\varepsilon}_{\ell_{0}}(x)]\} +\langle b_{\ell_{0}},\deri^{1}[u^{\varepsilon}_{\ell_{0}}-v^{\varepsilon}_{\ell_{0}}]\rangle\notag\\
&\quad+\psi_{\varepsilon}(|\deri^{1} u_{\ell_{0}}^{\varepsilon}|^{2}- g_{\ell_{0}}^{2})-\psi_{\varepsilon}(|\deri^{1} v_{\ell_{0}}^{\varepsilon}|^{2}- g_{\ell_{0}}^{2}),\quad\text{a.e. in $B_{r}(x_{0})$.}
\end{align*}
 Then, 
\begin{multline}
\lim_{r\rightarrow0}\bigg\{\infess_{B_{r}(x_{0})}[\psi_{\varepsilon}(|\deri^{1} u_{\ell_{0}}^{\varepsilon}|^{2}- g_{\ell_{0}}^{2})-\psi_{\varepsilon}(|\deri^{1} v_{\ell_{0}}^{\varepsilon}|^{2}- g_{\ell_{0}}^{2})]\bigg\}\\
<-\min_{x\in B_{r_{1}}(x_{0})}\{c_{\ell_{0}}(x)[u^{\varepsilon}_{\ell_{0}}(x)-v^{\varepsilon}_{\ell_{0}}(x)]\}<0.
\end{multline}
That means $\psi_{\varepsilon}(|\deri^{1} u_{\ell_{0}}^{\varepsilon}|^{2}- g_{\ell_{0}}^{2})-\psi_{\varepsilon}(|\deri^{1} v_{\ell_{0}}^{\varepsilon}|^{2}- g_{\ell_{0}}^{2})$ is not continuous at $x_{0}$ which is a contradiction. Thus, 
\begin{equation}\label{ineq12.01}
0=v^{\varepsilon}_{\ell_{0}}-(v^{\varepsilon}_{\ell_{1}}+\vartheta_{\ell_{0},\ell_{1}})
=v^{\varepsilon}_{\ell_{0}}-\mathcal{M}_{\ell_{0}}v^{\varepsilon}\leq u^{\varepsilon}_{\ell_{0}}-\mathcal{M}_{\ell_{0}}u^{\varepsilon}\leq 0\quad\text{at}\ x_{0}.
\end{equation}
It implies that 
\begin{align}
&u^{\varepsilon}_{\ell_{1}}({x_{0} })-v^{\varepsilon}_{\ell_{1}}({x_{0} })\geq u^{\varepsilon}_{\ell_{0}}(x_{0})-v^{\varepsilon}_{\ell_{0}}(x_{0})>0,\label{ineq13}\\
&v^{\varepsilon}_{\ell_{0}}(x_{0})=v^{\varepsilon}_{\ell_{1}}({x_{0} })+\vartheta_{\ell_{0},\ell_{1}}.\notag
\end{align}
By \eqref{ineq9} and \eqref{ineq13}, we have that $u^{\varepsilon}_{\ell_{1}}-v^{\varepsilon}_{\ell_{1}}$ attains its maximum  at $ x_{0} \in\set$, whose value agrees with $u^{\varepsilon}_{\ell_{0}}(x_{0})-v^{\varepsilon}_{\ell_{0}}(x_{0})$.  Then, {replacing} $u^{\varepsilon}_{\ell_{0}}-v^{\varepsilon}_{\ell_{0}}$ by $u^{\varepsilon}_{\ell_{1}}-v^{\varepsilon}_{\ell_{1}}$ above and   repeating the same arguments seen in  \eqref{ineq10.0}--\eqref{ineq12.01}, we get that there is a regime $\ell_{2}\in\mathbb{I}$ such that 
\begin{align*}
&u^{\varepsilon}_{\ell_{2}}(x_{0})-v^{\varepsilon}_{\ell_{2}}(x_{0})=u^{\varepsilon}_{\ell_{1}}(x_{0})-v^{\varepsilon}_{\ell_{1}}(x_{0})=u^{\varepsilon}_{\ell_{0}}(x_{0})-v^{\varepsilon}_{\ell_{0}}(x_{0})>0,\notag\\
&v^{\varepsilon}_{\ell_{1}}(x_{0})=v^{\varepsilon}_{\ell_{2}}(x_{0})+\vartheta_{\ell_{1},\ell_{2}}.
\end{align*}
Recursively, we obtain a sequence of regimes $\{\ell_{i}\}_{i\geq0}$ such that
\begin{align}
&u^{\varepsilon}_{\ell_{i}}(x_{0})-v^{\varepsilon}_{\ell_{i}}(x_{0})=u^{\varepsilon}_{\ell_{i-1}}(x_{0})-v^{\varepsilon}_{\ell_{i-1}}(x_{0})=\cdots=u^{\varepsilon}_{\ell_{0}}(x_{0})-v^{\varepsilon}_{\ell_{0}}(x_{0})>0,\notag\\
&v^{\varepsilon}_{\ell_{i}}(x_{0})=v^{\varepsilon}_{\ell_{i+1}}(x_{0})+\vartheta_{\ell_{i},\ell_{i+1}}.\label{ineq16}
\end{align}
Since  $\mathbb{I}$ is finite,  {there is} a regime $\ell'$  that will appear infinitely often in  $\{\ell_{i}\}_{i\geq0}$. Let  $\ell_{n}=\ell'$, for some $n>1$. After $\hat{n}$ steps, the regime $\ell'$ reappears, i.e. $\ell_{n+\hat{n}}=\ell'$. Then, by \eqref{ineq16}, we get
\begin{equation}\label{ineq17}
v^{\varepsilon}_{\ell'}(x_{0})=v^{\varepsilon}_{\ell'}(x_{0})+\vartheta_{\ell',\ell_{n+1}}+\vartheta_{\ell_{n+1},\ell_{n+2}}+\cdots+\vartheta_{\ell_{n+\hat{n}-1},\ell'}.
\end{equation} 
Notice that \eqref{ineq17} contradicts the assumption that there is no loop of zero cost (see Eq. \eqref{l1}). From here we conclude that \eqref{ineq10} must occur. Taking $v^{\varepsilon}-u^{\varepsilon}$ and proceeding in the same way as before, it follows that for each $\ell\in\mathbb{I}$, $v^{\varepsilon}_{\ell}-u^{\varepsilon}_{\ell}\leq 0$  in $ \set $, and hence  we conclude that the solution $u^{\varepsilon}$ to the HJB equation \eqref{pc1} is unique.
\end{proof}

\subsection{Proof of Theorem \ref{M1}}\label{pro2}
In view of  Lemma \ref{Lb1} and by \eqref{conv1},  the following inequalities hold  for each $\ell\in\mathbb{I}$,
\begin{equation}\label{ineqe1}
0\leq u^{\varepsilon}_{\ell}\leq C_{1}\quad\text{and}\quad |\deri^{1}u^{\varepsilon}_{\ell}|\leq C_{4},\quad\text{in}\ \overline{\set},
\end{equation}	
for some positive constant $C_{4}=C_{4}(d,\Lambda,\alpha')$. The constant $C_{1}$ is as in \eqref{ap1}. Besides, for each $B_{\beta r}\subset\set$, there exists a positive constant $C_{5}=C_{5}(d,\Lambda,\alpha')$ such that
\begin{equation}\label{ineqe2}
||\deri^{2}u^{\varepsilon}_{\ell}||_{\Lp^{p}(B_{\beta r})}\leq C_{5},\quad \text{for each}\ p\in(1,\infty). 
\end{equation}
Then, from \eqref{ineqe1}--\eqref{ineqe2} and using again Arzel\`a-Ascoli compactness criterion and that $(\Lp^{p}(B_{\beta r}),||\cdot||_{\Lp^{p}(B_{\beta r})})$ is a reflexive space, we have that there exists a sub-sequence $\{u^{\varepsilon_{n}}_{\ell}\}_{n\geq1}$ of $\{u^{\varepsilon}_{\ell}\}_{\varepsilon\in(0,1)}$  and $u_{\ell}\in\hol^{0,1}(\overline{\set})\cap\sob^{2,\infty}_{\loc}(\set)$ such that
\begin{equation}
\begin{split} \label{econv1}
&\text{$u^{\varepsilon_{n}}_{\ell}\underset{\varepsilon_{n}\rightarrow0}{\longrightarrow}u_{\ell}$ in $\hol(\overline{\set})$, $\partial_{i}u^{\varepsilon_{n}}_{\ell}\underset{\varepsilon_{n}\rightarrow0}{\longrightarrow}\partial_{i} u_{\ell}$ $\text{in}\ \hol_{\loc}(\set)$,}\\ &\text{$\partial_{ij}u^{\varepsilon_{n}}_{\ell}\underset{\varepsilon_{n}\rightarrow0}{\longrightarrow}\partial_{ij}u_{\ell}$,  weakly $ \Lp^{p}_{\loc}(\set)$, for each $p\in(1,\infty)$.}
\end{split}
\end{equation}
\begin{rem}
The {notations} $C=C(*,\dots,*)$ and $K=K(*,\dots,*)$ represent positive constants that depend only on the quantities appearing in parenthesis.
\end{rem}
 We proceed to prove Theorem \ref{M1}.

\begin{proof}[Proof of Theorem \ref{M1}. Existence.]

	Now, let $\ell\in\mathbb{I}$ be fixed. Since $u^{\varepsilon_{n}}_{\ell}$ is the unique  {strong} solution to the HJB equation \eqref{pc1} when $\varepsilon=\varepsilon_{n}$,  {which belongs to $\hol^{0,1}(\overline{\set})$},  it follows that  for each $\kappa\in\mathbb{I}\setminus\{\ell\}$,  $u^{\varepsilon_{n}}_{\ell}-(u^{\varepsilon_{n}}_{\kappa} +\vartheta_{\ell,\kappa})\leq u^{\varepsilon_{n}}_{\ell}-\mathcal{M}_{\ell}u^{\varepsilon_{n}}\leq0$ in $\set$. From here and \eqref{econv1}, it yields that  $u_{\ell}-u_{\kappa} -\vartheta_{\ell,\kappa}\leq0$ in $\set$. Then, $u_{\ell}-\mathcal{M}_{\ell}u\leq0$, in $\set$. Also, we know that $[c_{\ell}-\dif_{\ell}]  u_{\ell}^{\varepsilon_{n}}+ \psi_{\varepsilon_{n}}(|\deri^{1} u_{\ell}^{\varepsilon_{n}}|^{2}- g_{\ell}^{2})\leq h_{\ell}$ a.e. in $\set$. Then,
 \begin{align}
 &0\leq\psi_{\varepsilon_{n}}(|\deri^{1} u_{\ell}^{\varepsilon_{n}}|^{2}- g_{\ell}^{2})\leq h_{\ell}-[c_{\ell}-\dif_{\ell}]  u_{\ell}^{\varepsilon_{n}},\quad \text{a.e.}\ \text{in}\  \set,\label{eneq2.1}
 \end{align}
 Consequently, by \eqref{a4}, \eqref{ineqe1}, \eqref{ineqe2} and \eqref{eneq2.1}, there exists a positive constant $C_{6}=C_{6}(d,\Lambda,\alpha')$ such that $0\leq\int_{B_{r}}\psi_{\varepsilon_{n}}(|\deri^{1} u_{\ell}^{\varepsilon_{n}}|^{2}- g_{\ell}^{2})\varpi\der x\leq\int_{B_{r}} \{h_{\ell}-[c_{\ell}-\dif_{\ell}]  u_{\ell}^{\varepsilon_{n}}\}\varpi\der x\leq C_{6}$ for each  $\varpi\in\mathcal{B}(B_{r})$, with $\mathcal{B}(\cdot)$ as in \eqref{c1}. Thus, using definition of $\psi_{\varepsilon}$ (see  \eqref{p12.1})  {and since $|\deri^{1}u_{\ell}^{\varepsilon_{n}}|^{2}-g^{2}_{\ell}$ is continuous in $\set$}, we have that for each $B_{r}\subset \set$, there exists $\varepsilon'\in(0,1)$ small enough, such that for all $\varepsilon_{n}\leq \varepsilon'$,  $|\deri^{1}u^{\varepsilon_{n}}_{\ell}|-g_{\ell}\leq0$ in $B_{r}$. Then, since \eqref{econv1} holds, it follows that $|\deri^{1} u_{\ell}|\leq g_{\ell}$ in $\set$. From \eqref{eneq2.1}, we get $\int_{B_{r}}\big\{[c_{\ell}-\dif_{\ell}]  u_{\ell}^{\varepsilon_{n}}-h_{\ell}\big\}\varpi\der x\leq 0$, for each $\varpi\in \mathcal{B}(B_{r})$. From here and \eqref{econv1}, we obtain that $[c_{\ell}-\dif_{\ell}]  u_{\ell}-h_{\ell}\leq 0$ a.e. in $\set$. Therefore, by the seen previously, 
	\begin{equation}\label{eineq4}
	\max\left\{[c_{\ell}-\dif_{\ell}]  u_{\ell}-h_{\ell},|\deri^{1} u_{\ell}|- g_{\ell},u_{\ell}-\mathcal{M}_{\ell}u\right\}\leq 0 ,\quad \text{a.e. in}\ \set.
	\end{equation}
	Without loss of generality we assume that $
	u_{\ell}(x^{*})-\mathcal{M}_{\ell}u(x^{*})<0$, for some $x^{*}\in\set$. Otherwise, the equality is satisfied in \eqref{eineq4}. Then, for each $\kappa\in\mathbb{I}$ such that $\kappa\neq\ell$,  $u_{\ell}-(u_{\kappa} +\vartheta_{\ell,\kappa})\leq u_{\ell}-\mathcal{M}_{\ell}u<0$ at $x^{*}$.  There exists a ball $B_{r_{1}}(x^{*})\subset\set$ such that 
	\begin{equation}\label{eineq5}
	u_{\ell}-(u_{\kappa} +\vartheta_{\ell,\kappa})\leq u_{\ell}-\mathcal{M}_{\ell}u<0,\quad\text{in}\ B_{r_{1}}(x^{*})
	\end{equation}
	due to the continuity of $u_{\ell}-u_{\kappa}$ in $\overline{\set}$. Now, consider that $|\deri^{1}u_{\ell}|-g_{\ell}<0$ for some $x^{*}_{1}\in B_{r_{1}}(x^{*})$. Otherwise, the equality is also satisfied in \eqref{eineq4}. By continuity of $|\deri^{1}u_{\ell}|-g_{\ell}$, it yields that for some $B_{r_{2}}(x^{*}_{1})\subset\set$, $|\deri^{1}u_{\ell}|-g_{\ell}<0$ in $B_{r_{2}}(x^{*}_{1})$. From here, using \eqref{econv1}, \eqref{eineq5} and  taking $\mathcal{N}\eqdef B_{r_{1}}(x^{*})\cap B_{r_{2}}(x^{*}_{1})$,  it can be verified that there exists an $\varepsilon'\in(0,1)$ small enough, such that for each $\varepsilon_{n}\leq\varepsilon'$, $|\deri^{1}u^{\varepsilon_{n}}_{\ell}|-g_{\ell}<0$ and $ u^{\varepsilon_{n}}_{\ell}-\mathcal{M}_{\ell}u^{\varepsilon_{n}}<0$ in $\mathcal{N}$. Thus, $[c_{\ell}-\dif_{\ell}]  u_{\ell}^{\varepsilon_{n}}= h_{\ell}$ a.e. in $\mathcal{N}$, since $u^{\varepsilon_{n}}$ is the unique solution to the HJB equation \eqref{pc1}, when $\varepsilon=\varepsilon_{n}$. Then, $\int_{\mathcal{N}}\big\{[c_{\ell}-\dif_{\ell}]  u_{\ell}^{\varepsilon_{n}}-h_{\ell}\big\}\varpi\der x= 0$, for each $\varpi\in\mathcal{B}(\mathcal{N})$.  Hence, letting $\varepsilon_{n}\rightarrow0$ and using again \eqref{econv1}, we get that  $u=(u_{1},\dots,u_{m})$ is a solution to the HJB equation  \eqref{esd5}.
\end{proof}

\begin{proof}[Proof of Theorem \ref{M1}. Uniqueness.] Suppose that  $u=(u_{1},\dots,u_{m})$ and $v=(v_{1},\dots,v_{m})$ are two solutions to the HJB equation  \eqref{esd5} whose components belong to  $\hol^{0,1}(\overline{\set})\cap\sob^{2,\infty}_{\loc}(\set)$. Take $(x_{0},\ell_{0})\in\overline{\set}\times\mathbb{I}$ such that
	\begin{equation}\label{eineq9}
	u_{\ell_{0}}(x_{0})-v_{\ell_{0}}(x_{0})=\sup_{(x,\ell)\in\overline{\set}\times\mathbb{I}}\{u_{\ell}(x)-v_{\ell}(x)\}.
	\end{equation}
	As before (see Subsection \ref{prop1}), we only need to verify that 
	\begin{equation}\label{eineq10}
	u_{\ell_{0}}-v_{\ell_{0}}\leq0,\quad \text{at}\  x_{0}\in\set,
	\end{equation}
  Assume that $u_{\ell_{0}}-v_{\ell_{0}}>0$ at $x_{0}$. Then, there exists a  ball $B_{r_{1}}(x_{0})\subset\set$ such that $c_{\ell_{0}}[u_{\ell_{0}}-v_{\ell_{0}}]\geq\min_{x\in B_{r_{1}}(x_{0})}\{c_{\ell_{0}}(x)[u_{\ell_{0}}(x)-v_{\ell_{0}}(x)]\}>0$ in $B_{r_{1}}(x_{0})$ due to the continuity of  $u_{\ell_{0}}-v_{\ell_{0}}$ in $\overline{\set}$ and that $c_{\ell_{0}}>0$ in $\set$. Meanwhile, from \eqref{eineq9}, $v_{\ell_{0}}-\mathcal{M}_{\ell_{0}}v\leq u_{\ell_{0}}-\mathcal{M}_{\ell_{0}}u\leq 0$ at $x_{0}$. If $v_{\ell_{0}}-\mathcal{M}_{\ell_{0}}v<0$ at $x_{0}$, there exists a ball $B_{r_{2}}(x_{0})\subset\set$ such that $v_{\ell_{0}}-\mathcal{M}_{\ell_{0}}v<0$ in $B_{r_{2}}(x_{0})$. Now, consider the auxiliary function $f_{\varrho}\eqdef u_{\ell_{0}}-v_{\ell_{0}}-\varrho u_{\ell_{0}}$, with $\varrho\in(0,1)$. Notice that $f_{\varrho}=0$ on $\partial\set$, for $\varrho\in(0,1)$, and  
	\begin{equation}\label{eineq11}
	f_{\varrho}\uparrow  u_{\ell_{0}}-v_{\ell_{0}}\ \text{uniformly in}\ \set,\ \text{when}\  \varrho\downarrow 0. 
	\end{equation}
Besides, there is a $\varrho'\in(0,1)$ small enough such that $\sup_{x\in  B_{r_{2}}(x_{0})}\{f_{\varrho}(x)\}>0$ for all $\varrho\in(0,\varrho')$ because of $u_{\ell_{0}}-v_{\ell_{0}}>0$ at $x_{0}$. By \eqref{eineq9} and \eqref{eineq11}, there exists $\hat{\varrho}\in(0,\varrho')$ small enough such that $f_{\hat{\varrho}}$ has a local maximum  at  $x_{\hat{\varrho}}\in B_{r_{1}}(x_{0})\cap B_{r_{2}}(x_{0})$. It follows that $|\deri^{1}v_{\ell_{0}}(x_{\hat{\varrho}})|=[1-\hat{\rho}]|\deri^{1}u_{\ell_{0}}(x_{\hat{\varrho}})|<|\deri^{1}u_{\ell_{0}}(x_{\hat{\varrho}})|\leq g(x_{\hat{\varrho}}).$ Thus, there exists a ball $B_{r_{3}}(x_{\hat{\varrho}})\subset B_{r_{1}}(x_{0})\cap B_{r_{2}}(x_{0})$ such that $[c_{\ell_{0}}-\dif_{\ell_{0}}]  v_{\ell_{0}}-h_{\ell_{0}}=0$ and  $[c_{\ell_{0}}-\dif_{\ell_{0}}]  u_{\ell_{0}}-h_{\ell_{0}}\leq0$ in $B_{r_{3}}(x_{\hat{{\varrho}}})$.  Then, by Bony's maximum principle, we have that $0\geq\lim_{r\rightarrow0}\big\{\infess_{B_{r}(x_{\hat{\varrho}})} \tr[a_{\ell_{0}}\deri^{2}f_{\hat{\varrho}}]\big\}\geq c_{\ell_{0}}f_{\hat{\varrho}}+\hat{\varrho} h_{\ell_{0}}$ at $x_{\hat{\varrho}}$,  which is a contradiction because of $\hat{\varrho} h_{\ell}\geq0$, $f_{\hat{\varrho}}>0$ and $c_{\ell_{0}}>0$ at $x_{\hat{\varrho}}$. We conclude that, $0=v_{\ell_{0}}-\mathcal{M}_{\ell_{0}}v\leq u_{\ell_{0}}-\mathcal{M}_{\ell_{0}}u\leq 0$ at $x_{0}$. Using the same arguments seen in  the proof of uniqueness of the solution to the  HJB equation \eqref{esd5}  (see Subsection \ref{prop1}), it can be verified that there is a contradiction with the assumption that there is no loop of  zero cost  (see Eq. \eqref{l1}). From here we conclude that \eqref{eineq10} must occur. Taking $v-u$ and proceeding in the same way as  before, we see $u$ is the unique solution to the HJB equation \eqref{esd5}.
\end{proof}	  

\section{Probabilistic part}\label{Pp1}
 In this section, under assumptions \eqref{h3} and {\eqref{a4}--\eqref{h5}}, we shall proceed to verify that the value functions $V$  given in \eqref{vf1} agrees with the solution $u$ to the HJB equation \eqref{esd5}  {on $\overline{\set}$}. For that aim, let us start showing that the value function $V^{\varepsilon}$ given in \eqref{Vfp1}, with $\varepsilon\in(0,1)$ fixed, agrees with the solution $u^{\varepsilon}$ to \eqref{pc1}. The proof of this result is presented in two parts; see Lemmas \ref{lv1} and \ref{optim1}.   

Let us consider $ (X^{\xi,\varsigma},I^{\varsigma})$ evolves as \eqref{esd3.1.0.0}--\eqref{esd3.1.0.1}, with $(\xi,\varsigma)\in\mathcal{U}^{\varepsilon}\times\mathcal{S}$ and initial state $(\tilde{x},\tilde{\ell})\in {\set}\times\mathbb{I}$. Recall that $\mathcal{U}^{\varepsilon}$ is defined in \eqref{p1}  and $\mathcal{S}$ is the set of elements $\varsigma=(\tau_{i},\ell_{i})_{i\geq0}$ that satisfy \eqref{cont.2}. The functional cost $\mathcal{V}_{\xi,\varsigma}$  is as in \eqref{pen1}. 

Under assumptions \eqref{h3}--\eqref{h2}, Lemmas \ref{lv1} and \ref{convexu1.0} shall be proven.
\begin{lema}[Verification Lemma for penalized absolutely continuous/switching control problem. First part]\label{lv1} Let $\varepsilon\in(0,1)$ be fixed. Then, $u^{\varepsilon}_{\tilde{\ell}}(\tilde{x})\leq\mathcal{V}_{\zeta,\varsigma}(\tilde{x},\tilde{\ell})$ for each $(\tilde{x},\tilde{\ell})\in\overline{\set}\times\mathbb{I}$  and $(\xi,\varsigma)\in\mathcal{U}^{\varepsilon}\times\mathcal{S}$.
\end{lema}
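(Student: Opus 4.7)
The plan is a verification argument via It\^o's formula applied to $t\mapsto\expo^{-r(t)}u^{\varepsilon}_{I^{\varsigma}_{t}}(X^{\xi,\varsigma}_{t})$. Fix $(\tilde x,\tilde\ell)\in\overline{\set}\times\mathbb{I}$ and $(\xi,\varsigma)\in\mathcal{U}^{\varepsilon}\times\mathcal{S}$; the case $\tilde x\in\partial\set$ is immediate, since $u^{\varepsilon}_{\tilde\ell}(\tilde x)=0$ while $\mathcal{V}_{\xi,\varsigma}\geq 0$, so I assume $\tilde x\in\set$. On each stochastic interval $[\tilde\tau_i,\tilde\tau_{i+1})$ the regime $I^{\varsigma}_{\cdot}\equiv\ell_i$ is constant, and because $\zeta$ is absolutely continuous on $\mathcal{U}^{\varepsilon}$, the process $X^{\xi,\varsigma}$ is a continuous It\^o diffusion with drift $-b_{\ell_i}-\mathbb{n}_{s}\dot\zeta_{s}$ and diffusion $\sigma_{\ell_i}$. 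Since by Proposition \ref{princ1.1} we only have $u^{\varepsilon}_{\ell_i}\in\hol^{0,1}(\overline{\set})\cap\sob^{2,\infty}_{\loc}(\set)$, I would apply the Krylov--It\^o formula (using the uniform ellipticity from \eqref{h2}) or, equivalently, mollify $u^{\varepsilon}_{\ell_i}$, use classical It\^o, and pass to the limit via the a priori bounds in Lemma \ref{Lb1}, localizing with stopping times $\sigma_N\uparrow\tau$ that confine $X^{\xi,\varsigma}$ to compacts of $\set$ so that the $\sigma_{\ell_i}\der W$-integral is a true martingale.

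Taking expectation over $[\tilde\tau_i\wedge\sigma_N,\tilde\tau_{i+1}\wedge\sigma_N]$ reduces the problem to bounding the integrand
\begin{equation*}
\expo^{-r(s)}\big\{[c_{\ell_i}-\dif_{\ell_i}]u^{\varepsilon}_{\ell_i}(X^{\xi,\varsigma}_s)+\langle \deri^1 u^{\varepsilon}_{\ell_i}(X^{\xi,\varsigma}_s),\mathbb{n}_s\rangle\dot\zeta_s\big\}.
\end{equation*}
From the HJB inequality \eqref{pc1} one gets $[c_{\ell_i}-\dif_{\ell_i}]u^{\varepsilon}_{\ell_i}\leq h_{\ell_i}-\psi_{\varepsilon}(|\deri^1 u^{\varepsilon}_{\ell_i}|^{2}-g_{\ell_i}^{2})$ a.e.\ in $\set$, while the Fenchel--Young inequality for the Legendre pair $(H^{\varepsilon}_{\ell_i},l^{\varepsilon}_{\ell_i})$ yields $\langle \deri^1 u^{\varepsilon}_{\ell_i},\mathbb{n}_s\rangle\dot\zeta_s\leq l^{\varepsilon}_{\ell_i}(\mathbb{n}_s\dot\zeta_s,X^{\xi,\varsigma}_s)+\psi_{\varepsilon}(|\deri^1 u^{\varepsilon}_{\ell_i}|^{2}-g_{\ell_i}^{2})$. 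Adding the two, the $\psi_{\varepsilon}$ contributions cancel and the bracket is bounded by the running cost $h_{\ell_i}(X^{\xi,\varsigma}_s)+l^{\varepsilon}_{\ell_i}(\mathbb{n}_s\dot\zeta_s,X^{\xi,\varsigma}_s)$. At the switching instant $\tau_{i+1}$, the obstacle side of \eqref{pc1} supplies $u^{\varepsilon}_{\ell_i}(X^{\xi,\varsigma}_{\tau_{i+1}})\leq u^{\varepsilon}_{\ell_{i+1}}(X^{\xi,\varsigma}_{\tau_{i+1}})+\vartheta_{\ell_i,\ell_{i+1}}$ on $\{\tau_{i+1}<\tau\}$, while the Dirichlet condition yields $u^{\varepsilon}_{\ell_i}(X^{\xi,\varsigma}_{\tau-})=0$ on $\{\tau_{i+1}=\tau\}$ by continuity of $X^{\xi,\varsigma}$.

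Telescoping the per-interval estimates from $i=0$ to $n-1$, discarding the non-negative residual $\E[\expo^{-r(\tilde\tau_n\wedge\sigma_N)}u^{\varepsilon}_{\ell_n}(X^{\xi,\varsigma}_{\tilde\tau_n\wedge\sigma_N})]$, and then letting $N\to\infty$ followed by $n\to\infty$ — using $\tau_n\uparrow\infty$ from \eqref{cont.2}, monotone convergence, and the non-negativity of $h_{\ell}$, $l^{\varepsilon}_{\ell}$ and $\vartheta_{\ell,\kappa}$ — yields exactly $u^{\varepsilon}_{\tilde\ell}(\tilde x)\leq\mathcal{V}_{\xi,\varsigma}(\tilde x,\tilde\ell)$. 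The main technical hurdle is not the algebra, which is driven by the cancellation of the $\psi_{\varepsilon}$ term against the Fenchel--Young dual, but rather the rigorous application of It\^o's formula to the merely $\sob^{2,\infty}_{\loc}$ function $u^{\varepsilon}_{\ell_i}$ and the identification of the $\sigma_{\ell_i}(X^{\xi,\varsigma})\der W$-integral as a true martingale on each localized subinterval; both are handled by the mollification-and-uniform-bound scheme supported by Lemma \ref{Lb1}.
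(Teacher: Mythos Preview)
Your argument is correct and follows the same verification scheme as the paper: It\^o's formula on each regime interval, the HJB inequality $[c_{\ell}-\dif_{\ell}]u^{\varepsilon}_{\ell}\leq h_{\ell}-\psi_{\varepsilon}(|\deri^{1}u^{\varepsilon}_{\ell}|^{2}-g_{\ell}^{2})$ paired with Fenchel--Young to cancel the penalty, the obstacle inequality $u^{\varepsilon}_{\ell_{i}}\leq u^{\varepsilon}_{\ell_{i+1}}+\vartheta_{\ell_{i},\ell_{i+1}}$ at switching times, and a telescoping sum with the non-negative terminal term dropped.

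The one genuine difference is how the regularity deficit of $u^{\varepsilon}$ is handled. You propose Krylov--It\^o or an ad hoc mollification of $u^{\varepsilon}$ combined with localization to compacts. The paper instead exploits the approximants already built in Section~\ref{euHJB1}: it applies classical It\^o to the $\hol^{4,\alpha'}(\overline{\set})$ functions $u^{\varepsilon,\delta_{\hat n}}$ from Proposition~\ref{princ1.0}, which satisfy $[c_{\ell}-\dif_{\ell}]u^{\varepsilon,\delta_{\hat n}}_{\ell}+\psi_{\varepsilon}(\cdot)\leq h_{\ell}$ pointwise on all of $\set$, obtains the inequality \eqref{ex1} for each $\delta_{\hat n}$, and then passes to the limit $\delta_{\hat n}\to 0$ using the uniform convergence \eqref{conv1} and dominated convergence. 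This route avoids any appeal to Krylov's estimate and keeps the martingale term genuinely square-integrable on the whole of $\overline{\set}$ (no localization to $\set_{q}$ is needed here, unlike in the proof of Theorem~\ref{verf2}), at the cost of carrying the extra index $\delta_{\hat n}$ through the computation. Your approach is a legitimate alternative; the paper's is simply the one that recycles the PDE machinery already in place.
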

 From now on, for simplicity of notation, we replace $X^{\xi,\varsigma}$ by $X$ in the  proofs of the results.
\begin{proof}[Proof of Lemma \ref{lv1}]
 Let $\{u^{\varepsilon,\delta_{\hat{n}}}\}_{\hat{n}\geq1}$ be the sequence of unique solutions to the NPDS \eqref{NPD.1}, when $\delta=\delta_{\hat{n}}$, which  satisfy \eqref{conv1}. By Proposition \ref{princ1.0}, it is known that $u^{\varepsilon,\delta_{\hat{n}}}_{\ell}\in\hol^{4,\alpha'}(\overline{\set})$ for $\ell\in\mathbb{I}$. Then, using integration by parts and It\^o's formula  (see \cite[Cor. 2 and Thm. 33, pp. 68 and 81, respectively]{pro}) in $\expo^{-r(\tmt)}u^{\varepsilon,\delta_{\hat{n}}}_{\ell_{i}}(X_{\tmt})$ on $ [\tilde{\tau}_{i},\tilde{\tau}_{i+1})$, $i\geq0$, where $\tilde{\tau}_{i}=\tau_{i}\wedge\tau$, $\tau=\inf\{t\geq0: X_{\tmt}\in\set\}$ and  $r(\tmt)$ is as in \eqref{eq0.1}, and taking expected value on it,  we have that 
\begin{align}\label{expand.1.0}
&\E_{\tilde{x},\tilde{\ell}}\Big[\expo^{-r(\tilde{\tau}_{i})}u^{\varepsilon,\delta_{\hat{n}}}_{\ell_{i}}(X_{\tilde{\tau}_{i}})\Big]=\E_{\tilde{x},\tilde{\ell}}\bigg[[u^{\varepsilon,\delta_{\hat{n}}}_{\ell_{1}}(X_{\tau_{1}})+\vartheta_{\ell_{0},\ell_{1}}]\uno_{\{\tau_{1}=0,\tau_{1}<\tau,i=0\}}\notag\\ 
&\quad+[u^{\varepsilon,\delta_{\hat{n}}}_{\ell_{0}} (X_{\tau_{1}})-u^{\varepsilon,\delta_{\hat{n}}}_{\ell_{1}}(X_{\tau_{1}} )-\vartheta_{\ell_{0},\ell_{1}}]\uno_{\{\tau_{1}=0,\tau_{1}<\tau,i=0\}}\notag\\
&\quad+\bigg[\expo^{-r(\tmt\wedge\tilde{\tau}_{1})}u^{\varepsilon,\delta_{\hat{n}}}_{\ell_{0}}(X_{\tmt\wedge\tilde{\tau}_{1}})-\widetilde{\mathcal{M}}_{\ell_{0}}[0,\tmt\wedge\tilde{\tau}_{1};X,u^{\varepsilon,\delta_{\hat{n}}}]\notag\\
&\quad+\int_{0}^{\tmt\wedge\tilde{\tau}_{1}}\expo^{-r(\tms)}[c_{\ell_{0}}(X_{\tms})u^{\varepsilon,\delta_{\hat{n}}}_{\ell_{0}}(X_{\tms})-\dif_{\ell_{0}}u^{\varepsilon,\delta_{\hat{n}}}_{\ell_{0}}(X_{\tms})+\langle\deri^{1} u^{\varepsilon,\delta_{\hat{n}}}_{\ell_{0}}(X_{\tms}),\dot{\zeta}_{\tms}\mathbb{n}_{\tms}\rangle]\der \tms\bigg]\uno_{\{\tau_{1}\neq0,i=0\}}\notag\\
&\quad+\bigg[\expo^{-r(\tmt\wedge\tilde{\tau}_{i+1})}u^{\varepsilon,\delta_{\hat{n}}}_{\ell_{i}}(X_{\tmt\wedge\tilde{\tau}_{i+1}})-\widetilde{\mathcal{M}}_{\ell_{i}}[\tilde{\tau}_{i},\tmt\wedge\tilde{\tau}_{i+1};X,u^{\varepsilon,\delta_{\hat{n}}}]\notag\\
&\quad+\int_{\tilde{\tau}_{i}}^{\tmt\wedge\tilde{\tau}_{i+1}}\expo^{-r(\tms)}[c_{\ell_{i}}(X_{\tms})u^{\varepsilon,\delta_{\hat{n}}}_{\ell_{i}}(X_{\tms})-\dif_{\ell_{i}}u^{\varepsilon,\delta_{\hat{n}}}_{\ell_{i}}(X_{\tms})+\langle\deri^{1} u^{\varepsilon,\delta_{\hat{n}}}_{\ell_{i}}(X_{\tms}),\dot{\zeta}_{\tms}\mathbb{n}_{\tms}\rangle]\der \tms\bigg]\uno_{\{i\neq0\}}\bigg],
\end{align}
where
\begin{align}\label{mar1}
\widetilde{\mathcal{M}}_{\ell_{i}}[\tilde{\tau}_{i},\tmt\wedge\tilde{\tau}_{i+1};X,u^{\varepsilon,\delta_{\hat{n}}}]\eqdef\int_{\tilde{\tau}_{i}}^{\tmt\wedge\tilde{\tau}_{i+1}}\expo^{-r(\tms)}\langle\deri^{1} u^{\varepsilon,\delta_{\hat{n}}}_{\ell_{i}}(X_{\tms}),\sigma_{\ell_{i}}(X_{\tms})\rangle\der  W_{\tms}.
\end{align}
Since $\widetilde{\mathcal{M}}_{\ell_{i}}[\tilde{\tau}_{i},\tmt\wedge\tilde{\tau}_{i+1};X,u^{\varepsilon,\delta_{\hat{n}}}]$ is a square integrable martingale, $\langle\gamma,y\rangle\leq\psi_{\varepsilon}(|\gamma|^{2}-g_{\ell}   (x)^{2})+l^{\varepsilon}_{\ell}(y,x)$ and $[c_{\ell}-\dif_{\ell}]u^{\varepsilon,\delta_{\hat{n}}}_{\ell}+\psi_{\varepsilon}(|\deri^{1}u^{\varepsilon,\delta_{\hat{n}}}_{\ell}|^{2}-g^{2}_{\ell})\leq h_{\ell}$, it follows that  
\begin{align}\label{ex1}
&\E_{\tilde{x},\tilde{\ell}}\Big[\expo^{-r(\tilde{\tau}_{i})}u^{\varepsilon,\delta_{\hat{n}}}_{\ell_{i}}(X_{\tilde{\tau}_{i}})\Big]\notag\\
&\leq\E_{\tilde{x},\tilde{\ell}}\bigg[[u^{\varepsilon,\delta_{\hat{n}}}_{\ell_{1}}(X_{\tau_{1}})+\vartheta_{\ell_{0},\ell_{1}}]\uno_{\{\tau_{1}=0,\tau_{1}<\tau,i=0\}}+\mathcal{D}[{\tau}_{1},\ell_{0},\ell_{1};X, u^{\varepsilon,\delta_{\hat{n}}}]\uno_{\{\tau_{1}=0,\tau_{1}<\tau,i=0\}}\notag\\
&\quad+\expo^{-r(\tmt\wedge\tilde{\tau}_{1})}[u_{\ell_{1}}^{\varepsilon,\delta_{\hat{n}}}(X_{\tmt\wedge\tilde{\tau}_{1}})+\vartheta_{\ell_{0},\ell_{1}}]\uno_{\{\tau_{1}\neq0,i=0\}}+\expo^{-r(\tmt\wedge\tilde{\tau}_{i+1})}[u_{\ell_{i+1}}^{\varepsilon,\delta_{\hat{n}}}(X_{\tmt\wedge\tilde{\tau}_{i+1}})+\vartheta_{\ell_{i},\ell_{i+1}}]\uno_{\{i\neq0\}}\notag\\
&\quad+\mathcal{D}[\tmt\wedge\tilde{\tau}_{1},\ell_{0},\ell_{1};X, u^{\varepsilon,\delta_{\hat{n}}}]\uno_{\{\tau_{1}\neq0,i=0\}}+\mathcal{D}[\tmt\wedge\tilde{\tau}_{i+1},\ell_{i},\ell_{i+1};X, u^{\varepsilon,\delta_{\hat{n}}}]\uno_{\{i\neq0\}}\notag\\
&\quad+\uno_{\{\tau_{1}\neq0,i=0\}}\int_{0}^{\tmt\wedge\tilde{\tau}_{1}}\expo^{-r(\tms)}[h_{\ell_{0}}(X_{\tms})+l^{\varepsilon}_{\ell_{0}}(\dot{\zeta}_{\tms}\mathbb{n}_{\tms},X_{\tms})]\der \tms\notag\\
&\quad+\uno_{\{i\neq0\}}\int_{\tilde{\tau}_{i}}^{\tmt\wedge\tilde{\tau}_{i+1}}\expo^{-r(\tms)}[h_{\ell_{i}}(X_{\tms})+l^{\varepsilon}_{\ell_{i}}(\dot{\zeta}_{\tms}\mathbb{n}_{\tms},X_{\tms})]\der \tms\bigg],
\end{align}
where
\begin{equation}\label{ex5}
\mathcal{D}[\tmt\wedge\tilde{\tau}_{i+1},\ell_{i},\ell_{i+1};X, u^{\varepsilon,\delta_{\hat{n}}}]\eqdef \expo^{-r({\tmt\wedge\tilde{\tau}}_{i+1})}[u^{\varepsilon,\delta_{\hat{n}}}_{\ell_{i}}(X_{\tmt\wedge\tilde{\tau}_{i+1}})-[u^{\varepsilon,\delta_{\hat{n}}}_{\ell_{i+1}}(X_{\tmt\wedge\tilde{\tau}_{i+1}})+\vartheta_{\ell_{i},\ell_{i+1}}]\big].
\end{equation}
We have $\max\{\expo^{-r(\tmt\wedge\tilde{\tau}_{i})}u^{\varepsilon,\delta_{\hat{n}}}_{\ell_{i}}(X_{{\tmt\wedge\tilde{\tau}}_{i}}), 
|\mathcal{D}[\tmt\wedge\tilde{\tau}_{i+1},\ell_{i},\ell_{i+1},X, u^{\varepsilon,\delta_{\hat{n}}}]|\}\leq 2 C_{1}+\max_{(x,\ell)\in\set\times\mathbb{I}}c_{\ell}(x)$ due to Lemma \ref{Lb1}. Meanwhile, we know that $\max_{(x,\ell)\in\set\times\mathbb{I}}|u^{\varepsilon,\delta_{\hat{n}}}_{\ell}(x)-u^{\varepsilon}_{\ell}(x)|\underset{\delta_{\hat{n}}\rightarrow0}{\longrightarrow}0$. Then, letting first $\delta_{\hat{n}}{\rightarrow}0$ and after $\tmt\rightarrow\infty$ in \eqref{ex1} and by the dominated convergence theorem, it follows that 
\begin{multline}\label{ex7}
\E_{\tilde{x},\tilde{\ell}}\Big[\expo^{-r(\tilde{\tau}_{i})}u^{\varepsilon}_{\ell_{i}}(X_{{\tilde{\tau}}_{i}})\Big]\leq\E_{\tilde{x},\tilde{\ell}}\bigg[\{\expo^{-r(\tau_{i+1})}u_{\ell_{i+1}}^{\varepsilon}(X_{\tau_{i+1}})+\mathcal{D}[\tau_{i+1},\ell_{i},\ell_{i+1};X, u^{\varepsilon}]\}\uno_{\{\tau_{i+1}<\tau\}}\\
+\int_{{\tilde{\tau}}_{i}}^{{\tilde{\tau}}_{i+1}}\expo^{-r(\tms)}[h_{\ell_{i}}(X_{\tms})+l^{\varepsilon}_{\ell_{i}}(\dot{\zeta}_{\tms}\mathbb{n}_{\tms},X_{\tms})]\der \tms+\expo^{-r(\tau_{i+1})}\vartheta_{\ell_{i},\ell_{i+1}}\uno_{\{\tau_{i+1}<\tau\}}\bigg],\quad \text{for}\ i\geq0, 
\end{multline}
Observe that $\mathcal{D}[\tau_{i+1},\ell_{i},\ell_{i+1},X, u^{\varepsilon}]\uno_{\{\tau_{i+1}<\tau\}}\leq0$ for $i\geq0$ because of  $u^{\varepsilon}_{\ell}-(u^{\varepsilon}_{\kappa} +\vartheta_{\ell,\kappa})\leq u^{\varepsilon}_{\ell}-\mathcal{M}_{\ell}u^{\varepsilon}\leq0$. With this remark and \eqref{ex7}, we conclude the statement of the lemma above.
\end{proof}

\subsubsection*{$\varepsilon$-penalized absolutely continuous/switching optimal control problem}
Before presenting the second part of the verification lemma and proving  it, let us first  construct the control $(\xi^{\varepsilon,*},\varsigma^{\varepsilon,*})$ which turns out to  be the optimal strategy for the $\varepsilon$-penalized absolutely continuous/switching  control problem.  For that aim, let us introduce the switching regions. 

For any $\ell\in\mathbb{I}$, let $\mathcal{S}^{\varepsilon}_{\ell}$ be the set defined by 
$$\mathcal{S}^{\varepsilon}_{\ell}=\{x\in\set:u^{\varepsilon}_{\ell}(x)-\mathcal{M}_{\ell}u^{\varepsilon}(x)=0\}.$$ 
Notice that $\mathcal{S}^{\varepsilon}_{\ell}$ is a closed subset of $\set$ and corresponds with the region where it is optimal to switch regimes. The complement $\mathcal{C}^{\varepsilon}_{\ell}$ of $\mathcal{S}^{\varepsilon}_{\ell}$ in $\set$, where is optimal to stay in  {the} regime $\ell$, is the so-called continuation region
 $$\mathcal{C}^{\varepsilon}_{\ell}=\{x\in\set:u^{\varepsilon}_{\ell}(x)-\mathcal{M}_{\ell}u^{\varepsilon}(x)<0\}.$$ 
\begin{rem}\label{reg1}
	Observe that $u^{\varepsilon}_{\ell}\in\sob^{2,\infty}_{\loc}(\mathcal{C}^{\varepsilon}_{\ell})=\hol_{\loc}^{1,1}(\mathcal{C}^{\varepsilon}_{\ell})$. It implies that $h_{\ell}-\psi_{\varepsilon}(|\deri^{1}u^{\varepsilon}_{\ell}|^{2}-g_{\ell}^{2})\in\hol^{0,\alpha'}_{\loc}(\mathcal{C}_{\ell}^{\varepsilon})$ due to $h_{\ell}\in\hol^{2,\alpha'}(\overline{\set}))$ and $\psi_{\varepsilon}(|\deri^{1}u^{\varepsilon}_{\ell}|^{2}-g_{\ell}^{2})\in\hol^{0,\alpha'}_{\loc}(\mathcal{C}_{\ell}^{\varepsilon})$. From here and using Theorem 9.19 of \cite{gilb}, it yields that  $u^{\varepsilon}_{\ell}\in\hol_{\loc}^{2,\alpha'}(\mathcal{C}^{\varepsilon}_{\ell})$.
\end{rem}

\begin{rem}
	Notice that there are no isolated points in a switching region $\mathcal{S}_{\ell}^{\varepsilon}$.
\end{rem}

Also, the set $\mathcal{S}^{\varepsilon}_{\ell}$ satisfies the following property.
\begin{lema}\label{optim1}
Let $\ell$ be in $\mathbb{I}$. Then, $\mathcal{S}^{\varepsilon}_{\ell}=\widetilde{\mathcal{S}}^{\varepsilon}_{\ell}\eqdef\bigcup_{\kappa\in\mathbb{I}\setminus\{\ell \}}\mathcal{S}^{\varepsilon}_{\ell,\kappa}$ where $\mathcal{S}^{\varepsilon}_{\ell,\kappa}\eqdef\{x\in\mathcal{C}^{\varepsilon}_{\kappa}:u^{\varepsilon}_{\ell}(x)=u^{\varepsilon}_{\kappa}(x )+\vartheta_{\ell,\kappa} \}$. 
\end{lema}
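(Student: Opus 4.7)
\textbf{Proof plan for Lemma \ref{optim1}.}

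The plan is to establish the two inclusions $\widetilde{\mathcal{S}}^{\varepsilon}_{\ell}\subset\mathcal{S}^{\varepsilon}_{\ell}$ and $\mathcal{S}^{\varepsilon}_{\ell}\subset\widetilde{\mathcal{S}}^{\varepsilon}_{\ell}$ separately, leveraging the HJB inequality $u^{\varepsilon}_{\ell}\leq\mathcal{M}_{\ell}u^{\varepsilon}$ established in Proposition \ref{princ1.1}, the triangle inequality \eqref{eq1}, and the no-loop-of-zero-cost assumption \eqref{l1}. The first inclusion is immediate: if $x\in\mathcal{S}^{\varepsilon}_{\ell,\kappa}$ for some $\kappa\neq\ell$, then $u^{\varepsilon}_{\ell}(x)=u^{\varepsilon}_{\kappa}(x)+\vartheta_{\ell,\kappa}\geq\mathcal{M}_{\ell}u^{\varepsilon}(x)$, and combined with the HJB inequality $u^{\varepsilon}_{\ell}(x)\leq\mathcal{M}_{\ell}u^{\varepsilon}(x)$ this forces equality, so $x\in\mathcal{S}^{\varepsilon}_{\ell}$.

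The nontrivial inclusion is $\mathcal{S}^{\varepsilon}_{\ell}\subset\widetilde{\mathcal{S}}^{\varepsilon}_{\ell}$. Fix $x\in\mathcal{S}^{\varepsilon}_{\ell}$ and build inductively a finite sequence of regimes $\ell_{0}=\ell,\ell_{1},\ell_{2},\ldots$ as follows: whenever $x\in\mathcal{S}^{\varepsilon}_{\ell_{i}}$, pick (by finiteness of $\mathbb{I}$) some $\ell_{i+1}\in\mathbb{I}\setminus\{\ell_{i}\}$ that realizes the minimum defining $\mathcal{M}_{\ell_{i}}u^{\varepsilon}(x)$, so that
\begin{equation*}
u^{\varepsilon}_{\ell_{i}}(x)=u^{\varepsilon}_{\ell_{i+1}}(x)+\vartheta_{\ell_{i},\ell_{i+1}}.
\end{equation*}
If at some step the chosen $\ell_{i+1}$ satisfies $x\in\mathcal{C}^{\varepsilon}_{\ell_{i+1}}$, the construction stops; otherwise it continues.

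I would then argue that the procedure must stop after finitely many steps. Telescoping the defining identities yields
\begin{equation*}
u^{\varepsilon}_{\ell}(x)=u^{\varepsilon}_{\ell_{i+1}}(x)+\sum_{j=0}^{i}\vartheta_{\ell_{j},\ell_{j+1}}.
\end{equation*}
If the procedure never terminates, by finiteness of $\mathbb{I}$ there exist indices $m<n$ with $\ell_{m}=\ell_{n}$; choosing the first such repeat makes $\ell_{m},\ldots,\ell_{n-1}$ pairwise distinct, and subtracting the telescoped identities at $i=n-1$ and $i=m-1$ gives $\sum_{j=m}^{n-1}\vartheta_{\ell_{j},\ell_{j+1}}=0$. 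Since each term is non-negative, this produces a loop $\{\ell_{m},\ell_{m+1},\ldots,\ell_{n-1},\ell_{m}\}$ of zero cost, contradicting \eqref{l1}. Hence the procedure terminates at some index $i^{\star}$, yielding $\kappa\eqdef\ell_{i^{\star}+1}\in\mathcal{C}^{\varepsilon}_{\kappa}\cap(\mathbb{I}\setminus\{\ell\})$. Applying the triangle inequality \eqref{eq1} iteratively to $\sum_{j=0}^{i^{\star}}\vartheta_{\ell_{j},\ell_{j+1}}\geq\vartheta_{\ell,\kappa}$ and using the HJB inequality $u^{\varepsilon}_{\ell}(x)\leq u^{\varepsilon}_{\kappa}(x)+\vartheta_{\ell,\kappa}$ sandwiches the telescoped identity to give $u^{\varepsilon}_{\ell}(x)=u^{\varepsilon}_{\kappa}(x)+\vartheta_{\ell,\kappa}$, so $x\in\mathcal{S}^{\varepsilon}_{\ell,\kappa}\subset\widetilde{\mathcal{S}}^{\varepsilon}_{\ell}$.

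The main subtlety will be handling the case $i^{\star}=0$ (the first-chosen $\ell_{1}$ already lies in $\mathcal{C}^{\varepsilon}_{\ell_{1}}$, which is the trivial case) and making sure the extracted loop $\ell_{m},\ldots,\ell_{n-1},\ell_{m}$ has no internal repetitions so that it legitimately contradicts \eqref{l1}; the ``first repeat'' choice together with the fact that $\ell_{j+1}\neq\ell_{j}$ by construction (covering even the shortest possible loop of length two) handles this cleanly.
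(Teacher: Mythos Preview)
Your proposal is correct and follows essentially the same inductive-chain argument as the paper's proof: pick a minimizer, check whether the new regime is a continuation regime, and if not, iterate. The paper applies the triangle inequality \eqref{eq1} at each step to maintain the relation $u^{\varepsilon}_{\ell}(x)=u^{\varepsilon}_{\ell_{i}}(x)+\vartheta_{\ell,\ell_{i}}$ along the chain, whereas you telescope first and apply \eqref{eq1} only at the end; and your termination argument is more explicit than the paper's (which simply invokes ``since the number of regimes is finite'') in that you spell out how a non-terminating chain forces a loop of zero cost contradicting \eqref{l1}.
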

\begin{proof}
We obtain trivially that  $\widetilde{\mathcal{S}}^{\varepsilon}_{\ell}\subset\mathcal{S}^{\varepsilon}_{\ell}$ due to $u^{\varepsilon}_{\ell}-u^{\varepsilon}_{\kappa}-\vartheta_{\ell,\kappa}\leq u^{\varepsilon}_{\ell}-\mathcal{M}_{\ell}u^{\varepsilon}\leq0$ on $\set$ for $\kappa\in\mathbb{I}\setminus\{\ell\}$. If $x\in\mathcal{S}^{\varepsilon}_{\ell}$, there is an $\ell_{1}\neq\ell$ where $u^{\varepsilon}_{\ell}(x)=u^{\varepsilon}_{\ell_{1}}(x)+\vartheta_{\ell,\ell_{1}}$. 
Notice that $x$ must belong either $\mathcal{C}^{\varepsilon}_{\ell_{1}}$ or $\mathcal{S}^{\varepsilon}_{\ell_{1}}$. If $x\in\mathcal{C}^{\varepsilon}_{\ell_{1}}$, it yields that  $x\in\mathcal{S}^{\varepsilon}_{\ell,\ell_{1}}\subset\widetilde{\mathcal{S}}^{\varepsilon}_{\ell} $. Otherwise, there is an $\ell_{2}\neq\ell_{1}$ such that $u^{\varepsilon}_{\ell_{1}}(x)=u^{\varepsilon}_{\ell_{2}}(x)+\vartheta_{\ell_{1},\ell_{2}}$. It implies $u^{\varepsilon}_{\ell}(x)=u^{\varepsilon}_{\ell_{2}}(x)+\vartheta_{\ell,\ell_{1}}+\vartheta_{\ell_{1},\ell_{2}}\geq u^{\varepsilon}_{\ell_{2}}(x)+\vartheta_{\ell,\ell_{2}}$, since \eqref{eq1} holds. Then, $u^{\varepsilon}_{\ell}(x)= u^{\varepsilon}_{\ell_{2}}(x)+\vartheta_{\ell,\ell_{2}}$. Again $x$ must belong either $\mathcal{C}^{\varepsilon}_{\ell_{2}}$ or $\mathcal{S}^{\varepsilon}_{\ell_{2}}$. If $x\in\mathcal{C}^{\varepsilon}_{\ell_{2}}$, it yields that  $x\in\mathcal{S}^{\varepsilon}_{\ell,\ell_{2}}\subset\widetilde{\mathcal{S}}^{\varepsilon}_{\ell} $. Otherwise, arguing the same way than before  and since the number of regimes is finite, it must occur that there is some $\ell_{i}\neq\ell$ such that $x\in\mathcal{C}^{\varepsilon}_{\ell_{i}}$ and $u^{\varepsilon}_{\ell}(x)= u^{\varepsilon}_{\ell_{i}}(x)+\vartheta_{\ell,\ell_{i}}$. Therefore $x\in\mathcal{S}^{\varepsilon}_{\ell,\ell_{i}}\subset\widetilde{\mathcal{S}}^{\varepsilon}_{\ell} $.
\end{proof}

Now we construct the optimal control $(\xi^{\varepsilon,*},\varsigma^{\varepsilon,*})$  to the problem \eqref{Vfp1}.   Let $(\tilde{x},\tilde{\ell})$ be in $\overline{\set}\times\mathbb{I}$. The dynamics of the process $X^{\varepsilon,*}_{t}\eqdef\{X^{\varepsilon,*}_{\tmt}:\tmt\geq0\}$ and $(\xi^{\varepsilon,*},\varsigma^{\varepsilon,*})$  {is} given recursively in the following way:

\begin{enumerate}
	\item[(i)] Define $\tau^{*}_{0}=0$ and $\ell^{*}_{0-}=\tilde{\ell}$. If $\tilde{x}\notin\mathcal{C}^{\varepsilon}_{\tilde{\ell}}$, take $\tau^{*}_{1}\eqdef0$ and pass to item (ii) because of Lemma \ref{optim1}. Otherwise,  the process  $X^{\varepsilon,*}$ evolves as
\begin{multline}\label{opt1}
 {X}^{\varepsilon,*}_{\tmt\wedge\tilde{\tau}^{*}_{1}}=\tilde{x}-\int_{0}^{\tmt\wedge\tilde{\tau}^{*}_{1}}[ b ({X}^{\varepsilon,*}_{\tms},\ell^{*}_{0})+\mathbb{n}_{\tms}^{\varepsilon,*}\zeta^{\varepsilon,*}_{\tms}]\der \tms\\
 +\int_{0}^{\tmt\wedge\tilde{\tau}^{*}_{1}}\sigma({X}^{\ell,\varepsilon,*}_{\tms},\ell^{*}_{0})\der  W_{\tms},\quad\text{for}\ t>0,
\end{multline}
with $X^{\varepsilon,*}_{0}=\tilde{x}$, $\tau^{*}\eqdef\inf\{\tmt>0:X^{\varepsilon,*}_{\tmt}\notin\set\}$,
\begin{equation}\label{opt3}
\tilde{\tau}^{*}_{1}\eqdef\tau^{*}_{1}\wedge\tau^{*}\quad\text{and}\quad\tau^{*}_{1}\eqdef\inf\big\{\tmt\geq0:X^{\varepsilon,*}_{\tmt}\in\mathcal{S}^{\varepsilon}_{\ell^{*}_{0}}\big\}.
\end{equation}
The control process $\xi^{\varepsilon,*}=(\mathbb{n}^{\varepsilon,*},\zeta^{\varepsilon,*})$ is defined by
	\begin{equation}\label{opt4}
\mathbb{n}^{\varepsilon,*}_{\tmt}=
\begin{cases}
\frac{\deri^{1}u^{\varepsilon}_{\ell^{*}_{0}}(X^{\varepsilon,*}_{\tmt})}{|\deri^{1}u^{\varepsilon}_{\ell^{*}_{0}}( X^{\varepsilon,*}_{\tmt})|}, &\text{if}\ |\deri^{1}u^{\varepsilon}_{\ell^{*}_{0}}( X ^{\varepsilon,*}_{\tmt})|\neq0\ \text{and}\ \tmt\in[0,\tilde{\tau}^{*}_{1}),\\
\gamma_{0},& \text{if}\ |\deri^{1}u^{\varepsilon}_{\ell^{*}_{0}}(X^{\varepsilon,*}_{\tmt})|=0\  \text{and}\ \tmt\in[0,\tilde{\tau}^{*}_{1}), 
\end{cases}
\end{equation}
where $\gamma_{0}\in\R^{d}$ is a unit vector fixed, and $\zeta^{\varepsilon,*}_{\tmt}=\int_{0}^{\tmt}\dot{\zeta}^{\varepsilon,*}_{\tms}\der \tms$, with $\tmt\in[0,\tilde{\tau}^{*}_{1})$ and 
\begin{align}\label{opt5}
\dot{\zeta}^{\varepsilon,*}_{\tms}= 2\psi'_{\varepsilon}(|\deri^{1} u^{\varepsilon}_{\ell^{*}_{0}}(X^{\varepsilon,*}_{\tms})|^{2}- g_{\ell^{*}_{0}}(X^{\varepsilon,*}_{\tms})^{2})|\deri^{1} u^{\varepsilon}_{\ell^{*}_{0}}(X^{\varepsilon,*}_{\tms})|.
\end{align} 
\item[(ii)] Recursively, letting  $i\geq1$ and defining
\begin{equation}\label{opt6}
\begin{split}
&\ell^{*}_{i}\in\displaystyle\argmin_{\kappa\in\mathbb{I}\setminus\{\ell^{*}_{i-1}\}}\big\{u^{\varepsilon}_{\kappa}(X^{\varepsilon,*}_{\tau^{*}_{i}})+\vartheta_{\ell^{*}_{i-1},\kappa}\big\},\\
&\tilde{\tau}^{*}_{i+1}=\tau^{*}_{i+1}\wedge\tau^{*},\quad\tau^{*}_{i+1}=\inf\big\{\tmt>\tau^{*}_{i}:X^{\varepsilon,*}_{\tmt}\in\mathcal{S}^{\varepsilon}_{\ell^{*}_{i}}\big\},
\end{split}
\end{equation}  
if $\tau^{*}_{i}<\tau^{*}$, the process $X^{\varepsilon,*}$  evolves as
\begin{multline}\label{opt7}
{X}^{\varepsilon,*}_{\tmt\wedge\tilde{\tau}^{*}_{i+1}}=X^{\varepsilon,*}_{\tau_{i}^{*}}
-\int_{\tau^{*}_{i}}^{\tmt\wedge\tilde{\tau}^{*}_{i+1}}[ b ({X}^{\varepsilon,*}_{\tms},\ell^{*}_{i})+\mathbb{n}_{\tms}^{\varepsilon,*}\zeta^{\varepsilon,*}_{\tms}]\der \tms\\
+\int_{\tau^{*}_{i}}^{\tmt\wedge\tilde{\tau}^{*}_{i+1}}\sigma({X}^{\varepsilon,*}_{\tms},\ell^{*}_{i})\der  W_{\tms},\quad\text{for}\ \tmt\geq\tau^{*}_{i},
\end{multline}
where  
\begin{equation}\label{opt9}
\mathbb{n}^{\varepsilon,*}_{\tmt}=
\begin{cases}
\frac{\deri^{1}u^{\varepsilon}_{\ell^{*}_{i}}(X^{\varepsilon,*}_{\tmt})}{|\deri^{1}u^{\varepsilon}_{\ell^{*}_{i}}( X^{\varepsilon,*}_{\tmt})|}, &\text{if}\ |\deri^{1}u^{\varepsilon}_{\ell^{*}_{i}}( X ^{\varepsilon,*}_{\tmt})|\neq0\ \text{and}\ \tmt\in[\tau^{*}_{i},\tilde{\tau}^{*}_{i+1}),\\
\gamma_{0},& \text{if}\ |\deri^{1}u^{\varepsilon}_{\ell^{*}_{i}}(X^{\varepsilon,*}_{\tmt})|=0\  \text{and}\ \tmt\in[\tau^{*}_{i},\tilde{\tau}^{*}_{i+1}), 
\end{cases}
\end{equation}
with $\gamma_{0}\in\R^{d}$ is a unit vector fixed, and $\zeta^{\varepsilon,*}_{\tmt}=\int_{\tau^{\varepsilon,*}_{i}}^{\tmt}\dot{\zeta}^{\varepsilon,*}_{\tms}\der \tms$, with $\tmt\in[\tau^{*}_{i},\tilde{\tau}^{*}_{i+1})$ and 
\begin{align}\label{opt10}
\dot{\zeta}^{\varepsilon,*}_{\tms}= 2\psi'_{\varepsilon}(|\deri^{1} u^{\varepsilon}_{\ell^{*}_{i}}(X^{\varepsilon,*}_{\tms})|^{2}- g_{\ell^{*}_{i}}(X^{\varepsilon,*}_{\tms})^{2})|\deri^{1} u^{\varepsilon}_{\ell^{*}_{i}}(X^{\varepsilon,*}_{\tms})|.
\end{align} 
\end{enumerate}

\begin{rem}\label{ins1}
Suppose that $\tau^{*}_{i}<\tau^{*}$ for some $i>0$. We notice that for $\tmt\in[\tau^{*}_{i},\tau^{*}_{i+1})$,  $\mathbb{n}^{\varepsilon,*}_{\tmt}\dot{\zeta}^{\varepsilon,*}_{\tmt}=2\psi'_{\varepsilon}(|\deri^{1} u^{\varepsilon}_{\ell^{*}_{i}}(X^{\varepsilon,*}_{\tmt})|^{2}- g   (X^{\varepsilon,*}_{\tmt})^{2})\deri^{1}u^{\varepsilon}_{\ell^{*}_{i}}(X^{\varepsilon,*}_{\tmt})$, $\Delta\zeta^{\varepsilon,*}_{\tmt}=0$, $|\mathbb{n}^{\varepsilon,*}_{\tmt}|=1$ and, by \eqref{p12.1} and \eqref{ineqe1}, it yields that  $\dot{\zeta}^{\varepsilon,*}_{\tmt}\leq\frac{2C_{4}}{\varepsilon}$.  Also {we} see that $X_{\tmt}\in\mathcal{C}^{\varepsilon}_{\ell^{*}_{i}}$ if  $\tmt\in[\tau^{*}_{i},\tau^{*}_{i+1})$ due to Lemma \ref{optim1}.
\end{rem}
\begin{rem}
On the event $\{\tau^{*}=\infty\}$, $\tilde{\tau}^{*}_{i}=\tau_{i}^{*}$ for $i\geq0$.  From here and by \eqref{opt4}--\eqref{opt5} and \eqref{opt9}--\eqref{opt10},  it yields that  the control process $(\xi^{\varepsilon,*},\varsigma^{\varepsilon,*})$ belongs to $\mathcal{U}^{\varepsilon}\times\mathcal{S}$. On the event $\{\tau^{*}<\infty \}$,   let $\hat{\iota}$ be defined 
 as $\hat{\iota}=\max\{i\in\mathbb{N}:\tau^{*}_{i}\leq\tau^{*}\}$. Then, taking $\tau^{*}_{i}\eqdef\tau^{*}+i$ and $\ell^{*}_{i}=\hat{\ell}$ for $i>\hat{\iota}$, where $\hat{\ell}\in\mathbb{I}$ is fixed,  it follows that $\varsigma^{\varepsilon,*}=(\tau^{*}_{i},\ell^{*}_{i})_{i\geq1}\in\mathcal{S}$. Meanwhile,  since \eqref{ineqe1} holds on $\overline{\set}$  and $u^{\varepsilon}_{\ell}=0$ on $\overline{\set}$,  we take $\dot{\zeta}^{\varepsilon,*}_{\tmt}\equiv 0$ and  $\mathbb{n}^{\varepsilon,*}_{\tmt}\eqdef\gamma_{0}$, for $\tmt>\tau^{*}$. In this way, we have that $(\mathbb{n}^{\varepsilon,*},\zeta^{\varepsilon,*})\in\mathcal{U}^{\varepsilon}$.
\end{rem}

\begin{rem}
	Taking $I^{\varepsilon,*}_{\tmt}=\tilde{\ell}\uno_{[0,\tau^{*}_{1})}(\tmt)+\sum_{i\geq1}\ell^{*}_{i}\uno_{[\tau^{*}_{i},\tau^{*}_{i+1})}(\tmt)$, we see that it is a c\`adl\`ag process.
\end{rem}

\begin{lema}[Verification Lemma for penalized absolutely continuous/switching control problem. Second part]\label{convexu1.0}
	Let  $\varepsilon\in(0,1)$ be fixed and let  {$(X^{\varepsilon,*},I^{\varepsilon,*})$ be the process that is governed by} \eqref{opt1}--\eqref{opt10}. Then,  $u^{\varepsilon}_{\tilde{\ell}}(\tilde{x})=\mathcal{V}_{\xi^{\varepsilon,*},\varsigma^{\varepsilon,*}}(\tilde{x},\tilde{\ell})=V^{\varepsilon}_{\tilde{\ell}}(\tilde{x})$ for each $(\tilde{x},\tilde{\ell})\in\overline{\set}\times\mathbb{I}$. 
\end{lema}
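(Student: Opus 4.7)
The plan is to combine Lemma \ref{lv1}, which already gives $u^{\varepsilon}_{\tilde{\ell}}(\tilde{x}) \leq V^{\varepsilon}_{\tilde{\ell}}(\tilde{x}) \leq \mathcal{V}_{\xi^{\varepsilon,*},\varsigma^{\varepsilon,*}}(\tilde{x},\tilde{\ell})$, with a direct verification that $(\xi^{\varepsilon,*},\varsigma^{\varepsilon,*})$ attains $\mathcal{V}_{\xi^{\varepsilon,*},\varsigma^{\varepsilon,*}}(\tilde{x},\tilde{\ell}) = u^{\varepsilon}_{\tilde{\ell}}(\tilde{x})$. To this end I would rerun the It\^o computation of Lemma \ref{lv1}, but now applied directly to $u^{\varepsilon}$ rather than to the smooth approximants $u^{\varepsilon,\delta_{\hat n}}$, and show that under the chosen controls every inequality appearing there becomes an equality.

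First, on each interval $[\tau^*_i,\tilde{\tau}^*_{i+1})$, Remark \ref{ins1} together with Lemma \ref{optim1} guarantees that $X^{\varepsilon,*}$ stays in the continuation region $\mathcal{C}^{\varepsilon}_{\ell^*_i}$. By Remark \ref{reg1}, $u^{\varepsilon}_{\ell^*_i}\in\hol^{2,\alpha'}_{\loc}(\mathcal{C}^{\varepsilon}_{\ell^*_i})$, so the classical It\^o formula applies to $\expo^{-r(\tmt)} u^{\varepsilon}_{\ell^*_i}(X^{\varepsilon,*}_{\tmt})$, and on the same set the HJB \eqref{pc1} reduces to the PDE \emph{equality} $[c_{\ell^*_i}-\dif_{\ell^*_i}]u^{\varepsilon}_{\ell^*_i}+\psi_{\varepsilon}(|\deri^{1}u^{\varepsilon}_{\ell^*_i}|^{2}-g^{2}_{\ell^*_i}) = h_{\ell^*_i}$, turning the first inequality in the derivation of \eqref{ex1} into an equality. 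Second, the Fenchel--Young bound $\langle \gamma,y\rangle\leq H^{\varepsilon}_{\ell}(\gamma,x)+l^{\varepsilon}_{\ell}(y,x)$ saturates exactly when $y=\nabla_{\gamma}H^{\varepsilon}_{\ell}(\gamma,x)=2\psi'_{\varepsilon}(|\gamma|^{2}-g_{\ell}(x)^{2})\gamma$; plugging $\gamma=\deri^{1}u^{\varepsilon}_{\ell^*_i}(X^{\varepsilon,*}_{\tms})$ and comparing with \eqref{opt9}--\eqref{opt10}, this optimizer is precisely $\dot{\zeta}^{\varepsilon,*}_{\tms}\mathbb{n}^{\varepsilon,*}_{\tms}$, so the second inequality of Lemma \ref{lv1} also becomes equality. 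Third, at each switching time $\tau^*_{i+1}<\tau^*$ the construction \eqref{opt6} gives $X^{\varepsilon,*}_{\tau^*_{i+1}}\in\mathcal{S}^{\varepsilon}_{\ell^*_i}$ with $\ell^*_{i+1}$ attaining the minimum in $\mathcal{M}_{\ell^*_i}u^{\varepsilon}$, so (also using Lemma \ref{optim1}) $u^{\varepsilon}_{\ell^*_i}(X^{\varepsilon,*}_{\tau^*_{i+1}})=u^{\varepsilon}_{\ell^*_{i+1}}(X^{\varepsilon,*}_{\tau^*_{i+1}})+\vartheta_{\ell^*_i,\ell^*_{i+1}}$, i.e.\ $\mathcal{D}[\tau^*_{i+1},\ell^*_i,\ell^*_{i+1};X^{\varepsilon,*},u^{\varepsilon}]=0$.

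Combining the three equalities, telescoping over $i=0,\dots,N-1$, using that the stochastic integrals $\widetilde{\mathcal{M}}_{\ell^*_i}$ defined in \eqref{mar1} are square-integrable martingales, one obtains
\begin{equation*}
u^{\varepsilon}_{\tilde{\ell}}(\tilde{x}) = \E_{\tilde{x},\tilde{\ell}}\bigl[\expo^{-r(\tilde{\tau}^*_N)}u^{\varepsilon}_{\ell^*_{N-1}}(X^{\varepsilon,*}_{\tilde{\tau}^*_N})\bigr] + \mathcal{V}^{(N)}_{\xi^{\varepsilon,*},\varsigma^{\varepsilon,*}}(\tilde{x},\tilde{\ell}),
\end{equation*}
where $\mathcal{V}^{(N)}_{\xi^{\varepsilon,*},\varsigma^{\varepsilon,*}}$ denotes the truncation of $\mathcal{V}_{\xi^{\varepsilon,*},\varsigma^{\varepsilon,*}}$ to the first $N$ regimes in \eqref{pen1}. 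Letting $N\to\infty$ and invoking the uniform bound from Lemma \ref{Lb1} together with dominated convergence yields $u^{\varepsilon}_{\tilde{\ell}}(\tilde{x}) = \mathcal{V}_{\xi^{\varepsilon,*},\varsigma^{\varepsilon,*}}(\tilde{x},\tilde{\ell})$, closing the chain with Lemma \ref{lv1}.

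The main obstacle is controlling the boundary term $\E_{\tilde{x},\tilde{\ell}}[\expo^{-r(\tilde{\tau}^*_N)}u^{\varepsilon}_{\ell^*_{N-1}}(X^{\varepsilon,*}_{\tilde{\tau}^*_N})]$ as $N\to\infty$. On $\{\tau^*<\infty\}$ this is immediate from the Dirichlet condition $u^{\varepsilon}_{\ell}=0$ on $\partial\set$ together with continuity of $X^{\varepsilon,*}$; but on $\{\tau^*=\infty\}$ it requires ruling out finite accumulation of the constructed switching times $\tau^*_i$ (which, in turn, is also required for $\varsigma^{\varepsilon,*}\in\mathcal{S}$). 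This is exactly where hypothesis \eqref{l1} enters, essentially repeating the argument already used for uniqueness in Subsection \ref{prop1}: if $\tau^*_i\uparrow\hat{\tau}<\infty$ on a positive-probability event, then by finiteness of $\mathbb{I}$ some regime $\ell'$ is revisited infinitely often; the equalities $u^{\varepsilon}_{\ell^*_i}(X^{\varepsilon,*}_{\tau^*_{i+1}})=u^{\varepsilon}_{\ell^*_{i+1}}(X^{\varepsilon,*}_{\tau^*_{i+1}})+\vartheta_{\ell^*_i,\ell^*_{i+1}}$ propagate along the loop, and continuity of $u^{\varepsilon}_{\ell'}$ at $X^{\varepsilon,*}_{\hat{\tau}}$ forces the loop switching cost to vanish, contradicting \eqref{l1}. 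Once $\tau^*_N\to\infty$ is secured, the strict positivity $c_{\ell}>0$ on $\overline{\set}$ drives $\expo^{-r(\tilde{\tau}^*_N)}\to 0$ a.s., and the boundedness of $u^{\varepsilon}$ from Lemma \ref{Lb1} closes the argument.
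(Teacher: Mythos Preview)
Your proposal is correct and follows essentially the same route as the paper: apply It\^o's formula to $\expo^{-r(\tmt)}u^{\varepsilon}_{\ell^*_i}(X^{\varepsilon,*}_{\tmt})$ on each inter-switching interval using the $\hol^{2}$ regularity of $u^{\varepsilon}_{\ell}$ on $\mathcal{C}^{\varepsilon}_{\ell}$ (Remark~\ref{reg1}), observe that the PDE holds with equality there, that the Fenchel--Young inequality saturates at the constructed control, and that $\mathcal{D}=0$ at the constructed switching times, then telescope.

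Two complementary remarks. First, the paper inserts an explicit spatial localization $\hat{\tau}_i^{*,q}\eqdef\tau^*_i\wedge\inf\{\tmt>\tau^*_{i-1}:X^{\varepsilon,*}_{\tmt}\notin\set_q\}$ with $\set_q=\{x:\dist(x,\partial\set)>1/q\}$ before applying It\^o, then lets $q\to\infty$; you skip this, but since $u^{\varepsilon}_{\ell}\in\hol^{2}_{\loc}$ only, such a localization is what makes the It\^o step and the martingale property rigorous up to $\tilde{\tau}^*_{i+1}$, so you should add it. Second, your discussion of why the constructed switching times cannot accumulate (via the no-loop hypothesis \eqref{l1}) is a point the paper leaves implicit in Remark~4.5; your argument is the standard one and is a welcome addition.
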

\begin{proof}
 Take    $\hat{\tau}_{i}^{*,q}\eqdef\tau_{i}^{*}\wedge\inf\{ \tmt>\tau_{i-1}^{*}:X^{\varepsilon,*}_{\tmt}\notin\set_{q}\}$, with $\set_{q}\eqdef\{x\in\set:\dist(x,\partial\set )>1/q\}$ and $q$ a positive integer  {large enough}. By Remarks \ref{reg1} and \ref{ins1}, it is known that $u^{\varepsilon}_{\ell}$ is a $\hol^{2}$-function on $\mathcal{C}^{\varepsilon}_{\ell}$ and that $X^{\varepsilon,*}_{\tmt}\in\mathcal{C}^{\varepsilon}_{\ell^{*}_{i}}$ if  $\tmt\in[\hat{\tau}^{*,q}_{i},\hat{\tau}^{*,q}_{i+1})$. Then, using   integration by parts and It\^o's formula in $\expo^{-r(\tmt)}u^{\varepsilon}_{\ell^{*}_{i}}(X^{\varepsilon,*}_{\tmt})$ on the interval $[\hat{\tau}^{*,q}_{i},\hat{\tau}^{*,q}_{i+1})$, an taking expected value on  it, we obtain a similar expression as in \eqref{expand.1.0}. Now, considering that $\widetilde{\mathcal{M}}_{\ell^{*}_{i}}[\hat{\tau}_{i}^{*,q},\tmt\wedge\hat{\tau}^{*,q}_{i+1};X^{\varepsilon,*},u^{\varepsilon}]$, which was defined in \eqref{mar1}, is a square integrable martingale,  and since $[c_{\ell^{*}_{i}}-\dif_{\ell^{*}_{i}}]  u_{\ell^{*}_{i}}^{\varepsilon}=h_{\ell^{*}_{i}}-\psi_{\varepsilon}(|\deri^{1} u_{\ell^{*}_{i}}^{\varepsilon}|^{2}- g_{\ell^{*}_{i}}^{2})$ on $\mathcal{C}^{\varepsilon}_{\ell^{*}_{i}}$ and  the supremum  of $l^{\varepsilon}_{\ell}(\eta,x)$ is attained if $\gamma$ is related to $\eta$ by $\eta=2\psi'_{\varepsilon}(|\gamma|^{2}-g_{\ell}(x)^{2})\gamma$, i.e., $l^{\varepsilon}_{\ell}(2\psi'_{\varepsilon}(|\gamma|^{2}- g_{\ell}(x)^{2})\gamma,x)=2\psi'_{\varepsilon}(|\gamma|^{2}- g_{\ell}(x)^{2})|\gamma|^{2}-\psi_{\varepsilon}(|\gamma|^{2}- g_{\ell}(x)^{2})$, it can be checked 
\begin{align}\label{ex1.1}
\E_{\tilde{x},\tilde{\ell}}\Big[\expo^{-r(\hat{\tau}^{*,q}_{i})}u^{\varepsilon}_{\ell^{*}_{i}}(X^{\varepsilon,*}_{{\hat{\tau}}^{*,q}_{i}})\Big]&=\E_{\tilde{x},\tilde{\ell}}\bigg[[u^{\varepsilon}_{\ell^{*}_{1}}(X^{\varepsilon,*}_{\tau^{*}_{1}})+\vartheta_{\ell^{*}_{0},\ell^{*}_{1}}]\uno_{\{\tau^{*}_{1}=0,\tau^{*}_{1}<\tau^{*},i=0\}}\notag\\ 
&\quad+[u^{\varepsilon}_{\ell^{*}_{0}} (X^{\varepsilon,*}_{\tau^{*}_{1}})-u^{\varepsilon}_{\ell^{*}_{1}}(X^{\varepsilon,*}_{\tau^{*}_{1}} )-\vartheta_{\ell^{*}_{0},\ell^{*}_{1}}]\uno_{\{\tau^{*}_{1}=0,\tau^{*}_{1}<\tau^{*},i=0\}}\notag\\
&\quad+\expo^{-r(\tmt\wedge{\hat{\tau}}^{*,q}_{1})}u^{\varepsilon}_{\ell^{*}_{0}}(X^{\varepsilon,*}_{\tmt\wedge{\hat{\tau}}^{*,q}_{1}})\uno_{\{\tau^{*}_{1}\neq0,i=0\}}+\expo^{-r(\tmt\wedge{\hat{\tau}}^{*,q}_{i+1})}u^{\varepsilon}_{\ell^{*}_{i}}(X^{\varepsilon,*}_{\tmt\wedge{\hat{\tau}}^{*,q}_{i+1}})\uno_{\{i\neq0\}}\notag\\
&\quad+\uno_{\{\tau^{*}_{1}\neq0,i=0\}}\int_{0}^{\tmt\wedge{\hat{\tau}}^{*,q}_{1}}\expo^{-r(\tms)}[h_{\ell^{*}_{0}}(X^{\varepsilon,*}_{\tms})+l^{\varepsilon}_{\ell^{*}_{0}}(\dot{\zeta}^{*}_{\tms}\mathbb{n}^{*}_{\tms},X^{\varepsilon,*}_{\tms})]\der \tms\notag\\
&\quad+\uno_{\{i\neq0\}}\int_{{\hat{\tau}}^{*,q}_{i}}^{\tmt\wedge{\hat{\tau}}^{*,q}_{i+1}}\expo^{-r(\tms)}[h_{\ell^{*}_{i}}(X_{\tms})+l^{\varepsilon}_{\ell^{*}_{i}}(\dot{\zeta}^{*}_{\tms}\mathbb{n}^{*}_{\tms},X^{\varepsilon,*}_{\tms})]\der \tms\bigg],
\end{align}
Notice that $\hat{\tau}^ {*,q}_{i}\uparrow\tilde{\tau}^{*}_{i}$ as $q\rightarrow\infty$, $\Pro_{\tilde{x}}$-a.s.. Consequently,  letting first $q\rightarrow\infty$ and after $\tmt\rightarrow\infty$ in \eqref{ex1.1}, we see that
\begin{multline}\label{ex4.1}
\E_{\tilde{x},\tilde{\ell}}\Big[\expo^{-r(\tilde{\tau}^{*}_{i})}u^{\varepsilon}_{\ell^{*}_{i}}(X^{\varepsilon,*}_{{\tilde{\tau}^{*}}_{i}})\Big]=\E_{\tilde{x},\tilde{\ell}}\bigg[\{\expo^{-r(\tau^{*}_{i+1})}u_{\ell^{*}_{i+1}}^{\varepsilon}(X^{\varepsilon,*}_{\tau^{*}_{i+1}})+\mathcal{D}[\tau^{*}_{i+1},\ell^{*}_{i},\ell^{*}_{i+1},X^{\varepsilon,*}, u^{\varepsilon}]\}\uno_{\{\tau^{*}_{i+1}<\tau^{*}\}}\\
+\int_{{\tilde{\tau}^{*}}_{i}}^{{\tilde{\tau}^{*}}_{i+1}}\expo^{-r(\tms)}[h_{\ell^{*}_{i}}(X^{\varepsilon,*}_{\tms})+l^{\varepsilon}_{\ell^{*}_{i}}(\dot{\zeta}^{*}_{\tms}\mathbb{n}^{*}_{\tms},X^{\varepsilon,*}_{\tms})]\der \tms+\expo^{-r(\tau^{*}_{i+1})}\vartheta_{\ell^{*}_{i},\ell^{*}_{i+1}}\uno_{\{\tau^{*}_{i+1}<\tau^{*}\}}\bigg]\quad\text{for}\ i\geq0,
\end{multline}
with $\mathcal{D}[\tau^{*}_{i+1},\ell^{*}_{i},\ell^{*}_{i+1},X^{\varepsilon,*}, u^{\varepsilon}]$ as in \eqref{ex5}. By \eqref{opt3} and \eqref{opt6},  
$$\mathcal{D}[\tau^{*}_{i+1},\ell^{*}_{i},\ell^{*}_{i+1},X^{\varepsilon,*}, u^{\varepsilon}]\uno_{\{\tau^{*}_{i+1}<\tau^{*}\}}=0.$$
Therefore, from here and \eqref{ex4.1}, we obtain the desired result that was given in the lemma above.
\end{proof}

\subsubsection*{Proof of Theorem \ref{verf2}}
To finalize this section, we present the proof of Theorem \ref{verf2}{, which shall be proven under assumptions \eqref{h3} and {\eqref{a4}--\eqref{h5}}}.  Recall that $\mathcal{U}$ and   $\mathcal{S}$  are the families of controls $\xi=(\mathbb{n},\zeta)$ and $\varsigma=(\tau_{i},\ell_{i})_{i\geq0}$ that satisfy  \eqref{cont.1} and \eqref{cont.2}, respectively. The functional cost $V_{\xi,\varsigma}$ is given by \eqref{esd1.1.1} for $(\xi,\varsigma)\in\mathcal{U}\times\mathcal{S}$ . 

\begin{proof}[Proof of Theorem \ref{verf2}]
 Let $\{u^{\varepsilon_{n}}\}_{n\geq1}$ be the sequence of unique strong solutions to the HJB equation \eqref{pc1}, when $\varepsilon=\varepsilon_{n}$,  which  satisfy \eqref{econv1}. From Lemma \ref{convexu1.0}, we know that  
$$u^{\varepsilon_{n}}_{\tilde{\ell}}(\tilde{x})=\mathcal{V}_{\xi^{\varepsilon_{n},*},\varsigma^{\varepsilon_{n},*}}(\tilde{x},\tilde{\ell})= V^{\varepsilon_{n}}(\tilde{x},\tilde{\ell})\quad\text{for $(\tilde{x},\tilde{\ell})\in\overline{\set}\times\mathbb{I}$},$$
with $(\xi^{\varepsilon_{n},*},\varsigma^{\varepsilon_{n},*})$ as in \eqref{opt3}--\eqref{opt6}, when $\varepsilon=\varepsilon_{n}$. Notice that $l_{\varepsilon_{n}}(x,\beta\gamma)\geq\langle\beta\gamma, g_{\ell^{*}_{i}}   (x)\gamma\rangle-\psi_{\varepsilon_{n}}(| g_{\ell^{*}_{i}}(x)\gamma|^{2}- g_{\ell^{*}_{i}}(x)^{2})=\beta g _{\ell^{*}_{i}}  (x)$, with $\beta\in\R$ and $\gamma\in\R^{d}$ a unit vector. Then, from here and considering  {$(X^{\varepsilon_{n},*},I^{\varepsilon_{n},*})$ governed by} \eqref{opt1} and \eqref{opt7}, it follows that 
\begin{align}\label{NPIDD1.0}
V_{\tilde{\ell}}(\tilde{x})&\leq V_{\xi^{\varepsilon_{n},*},\varsigma^{\varepsilon_{n},*}}(\tilde{x},\tilde{\ell})\notag\\
&=\sum_{i\geq0}\E_{\tilde{x},\tilde{\ell}}\bigg[\int_{\tilde{\tau}^{*}_{i}}^{\tilde{\tau}^{*}_{i+1}}\expo^{-r(\tms)}[h_{\ell_{i}^{*}}(X^{\varepsilon_{n},*}_{\tms})\der \tms+\dot{\zeta}^{*}_{\tms}g_{\ell^{*}_{i}}(X^{\varepsilon_{n},*}_{\tms-})]\der\tms+\expo^{-r(\tau^{*}_{i+1})}\vartheta_{\ell^{*}_{i},\ell^{*}_{i+1}}\uno_{\{\tau^{*}_{i+1}<\tau^{*}\}}\bigg]\notag\\
&\leq \sum_{i\geq0}\E_{\tilde{x},\tilde{\ell}}\bigg[\int_{{\tilde{\tau}}^{*}_{i}}^{{\tilde{\tau}}^{*}_{i+1}}\expo^{-r(\tms)}[h_{\ell_{i}^{*}}(X^{\varepsilon_{n},*}_{\tms})+l^{\varepsilon_{n}}_{\ell^{*}_{i}}(\dot{\zeta}^{*}_{\tms}\mathbb{n}^{*}_{\tms},X^{\varepsilon_{n},*}_{\tms})]\der \tms+\expo^{-r(\tau^{*}_{i+1})}\vartheta_{\ell^{*}_{i},\ell^{*}_{i+1}}\uno_{\{\tau^{*}_{i+1}<\tau^{*}\}}\bigg]\notag\\
&=u^{\varepsilon_{n}}_{\tilde{\ell}}(\tilde{x}).
\end{align}
Notice that $V_{\xi^{\varepsilon_{n},*},\varsigma^{\varepsilon_{n},*}}$ is the cost function given in \eqref{esd1.1.1} corresponding to the control $(\xi^{\varepsilon_{n},*},\varsigma^{\varepsilon_{n},*})$, where the second term in the RHS  of \eqref{eq0.1.1} is zero, since $\zeta^{\varepsilon_{n},*}$  has the continuous part only.  Letting $\varepsilon_{n}\rightarrow0$ in \eqref{NPIDD1.0}, it yields $V\leq u$ on $\overline{\set}$. 

Let $\{u^{\varepsilon_{n},\delta_{\hat{n}}}\}_{n,\hat{n}\geq1}$ be the sequence of unique solutions to the NPDS \eqref{NPD.1}, when $\varepsilon=\varepsilon_{n}$ and $\delta=\delta_{\hat{n}}$, which  satisfy \eqref{conv1} and \eqref{econv1}.  Let us consider $(X,I)$ evolves as in \eqref{es1}  with initial state $(\tilde{x},\tilde{\ell})\in\overline{\set}\times\mathbb{I}$ and the control process $(\xi,\varsigma)$ belongs to $\mathcal{U}\times\mathcal{S}$.   Take    $\hat{\tau}_{i}^{q}\eqdef\tau_{i}\wedge\inf\{ \tmt>\tau_{i-1}:X_{\tmt}\notin\set_{q}\}$, with $i\geq1$, $\set_{q}\eqdef\{x\in\set:\dist(x,\partial\set )>1/q\}$ and $q$ a positive integer  {large enough}.  Using integration by parts and It\^o's formula in $\expo^{-r(\tmt)}u^{\varepsilon_{n},\delta_{\hat{n}}}_{\ell_{i}}(X_{\tmt})$ on $ [\hat{\tau}^{q}_{i},\hat{\tau}^{q}_{i+1})$, $i\geq0$,  we get that 
\begin{align}\label{s.1}
&\expo^{-r(\hat{\tau}^{q}_{i})}u^{\varepsilon_{n},\delta_{\hat{n}}}_{\ell_{i}}(X_{{\hat{\tau}}^{q}_{i}})=[u^{\varepsilon_{n},\delta_{\hat{n}}}_{\ell_{1}}(X_{\tau_{1}})+\vartheta_{\ell_{0},\ell_{1}}]\uno_{\{\tau_{1}=0,\tau_{1}<\tau,i=0\}}\notag\\ 
&\quad+[u^{\varepsilon_{n},\delta_{\hat{n}}}_{\ell_{0}} (X_{\tau_{1}})-u^{\varepsilon_{n},\delta_{\hat{n}}}_{\ell_{1}}(X_{\tau_{1}} )-\vartheta_{\ell_{0},\ell_{1}}]\uno_{\{\tau_{1}=0,\tau_{1}<\tau,i=0\}}\notag\\
&\quad+\expo^{-r(\tmt\wedge{\hat{\tau}}^{q}_{1})}u^{\varepsilon_{n},\delta_{\hat{n}}}_{\ell_{0}}(X_{\tmt\wedge{\hat{\tau}}^{q}_{1}})\uno_{\{\tau_{1}\neq0,i=0\}}+\expo^{-r(\tmt\wedge{\hat{\tau}}^{q}_{i+1})}u^{\varepsilon_{n},\delta_{\hat{n}}}_{\ell_{i}}(X_{\tmt\wedge{\hat{\tau}}^{q}_{i+1}})\uno_{\{i\neq0\}}\notag\\
&\quad+\uno_{\{\tau_{1}\neq0,i=0\}}\int_{0}^{\tmt\wedge{\hat{\tau}}^{q}_{1}}\expo^{-r(\tms)}[[c_{\ell_{0}}(X_{\tms})u^{\varepsilon_{n},\delta_{\hat{n}}}_{\ell_{0}}(X_{\tms})-\dif_{\ell_{0}}u^{\varepsilon_{n},\delta_{\hat{n}}}_{\ell_{0}}(X_{\tms})]\der\tms+\langle\deri^{1} u^{\varepsilon_{n},\delta_{\hat{n}}}_{\ell_{0}}(X_{\tms}),\mathbb{n}_{\tms}\rangle\der\zeta^{\comp}_{\tms}]\notag\\
&\quad+\uno_{\{i\neq0\}}\int_{{\hat{\tau}}^{q}_{i}}^{\tmt\wedge{\hat{\tau}}^{q}_{i+1}}\expo^{-r(\tms)}[[c_{\ell_{i}}(X_{\tms})u^{\varepsilon_{n},\delta_{\hat{n}}}_{\ell_{i}}(X_{\tms})-\dif_{\ell_{i}}u^{\varepsilon_{n},\delta_{\hat{n}}}_{\ell_{i}}(X_{\tms})]\der\tms+\langle\deri^{1} u^{\varepsilon_{n},\delta_{\hat{n}}}_{\ell_{i}}(X_{\tms}),\mathbb{n}_{\tms}\rangle\der\zeta^{\comp}_{\tms}]\notag\\
&\quad-\sum_{0\leq\tms<\tmt\wedge\hat{\tau}^{q}_{1}}\expo^{-r(\tms)}\mathcal{J}[\tms;\ell_{0},X, u^{\varepsilon_{n},\delta_{\hat{n}}}]\uno_{\{\tau_{1}\neq0,i=0\}}-\sum_{\hat{\tau}^{q}_{i}\leq\tms<\tmt\wedge\hat{\tau}^{q}_{i+1}}\expo^{-r(\tms)}\mathcal{J}[\tms;\ell_{i},X, u^{\varepsilon_{n},\delta_{\hat{n}}}]\uno_{\{i\neq0\}}\notag\\
&\quad-\widetilde{\mathcal{M}}_{\ell_{0}}[0,\tmt\wedge\hat{\tau}^{q}_{1};X,u^{\varepsilon_{n},\delta_{\hat{n}}}]\uno_{\{\tau_{1}\neq0,i=0\}}-\widetilde{\mathcal{M}}_{\ell_{i}}[\hat{\tau}^{q}_{i},\tmt\wedge\hat{\tau}^{q}_{i+1};X,u^{\varepsilon_{n},\delta_{\hat{n}}}]\uno_{\{i\neq0\}},
\end{align}
where $\widetilde{\mathcal{M}}_{\ell_{i}}[\hat{\tau}^{q}_{i},\tmt\wedge\hat{\tau}^{q}_{i+1};X,u^{\varepsilon_{n},\delta_{\hat{n}}}]$ is as in \eqref{mar1} and
\begin{equation*}
\mathcal{J}[\tms;\ell_{i},X, u^{\varepsilon_{n},\delta_{\hat{n}}}]\eqdef u^{\varepsilon_{n},\delta_{\hat{n}}}_{\ell_{i}}(X_{\tms-}-\mathbb{n}_{\tms}\Delta \zeta_{\tms})-u^{\varepsilon_{n},\delta_{\hat{n}}}_{\ell_{i}}(X_{\tms-}),\quad \text{for}\ i\geq0.
\end{equation*}
 Since $X_{\tms-}-\mathbb{n}_{\tms}\Delta\xi_{\tmt}\in \set$ for $\tms\in[\hat{\tau}^{q}_{i},\tmt\wedge\hat{\tau}^{q}_{i+1})$, $i\geq0$, and $\set$ is a convex set, by Mean Value Theorem,  it implies
\begin{align*}
-\mathcal{J}[\tms;\ell_{i},X, u^{\varepsilon_{n},\delta_{\hat{n}}}]&\leq |u^{\varepsilon_{n},\delta_{\hat{n}}}_{\ell_{i}}(X_{\tms-}-\mathbb{n}_{\tms}\Delta \zeta_{\tms})-u^{\varepsilon_{n},\delta_{\hat{n}}}_{\ell_{i}}(X_{\tms-})|\notag\\
&\leq\Delta\zeta_{\tms}\int_{0}^{1}|\deri^{1} u^{\varepsilon_{n},\delta_{\hat{n}}}_{\ell_{i}}( X_{\tms-}-\lambda \mathbb{n}_{\tms}\Delta\zeta_{\tms})|\der\lambda.
\end{align*}
Taking expected value in \eqref{s.1} and since $\widetilde{\mathcal{M}}_{\ell_{i}}[\hat{\tau}_{i}^{q},\tmt\wedge\hat{\tau}^{q}_{i+1};X,u^{\varepsilon_{n},\delta_{\hat{n}}}]$ is a square integrable martingale  and $[c_{\ell_{i}}-\dif_{\ell_{i}}]u^{\varepsilon_{n},\delta_{\hat{n}}}_{\ell_{i}}\leq h_{\ell_{i}}$, it follows that  
\begin{align}\label{s.2}
\E_{\tilde{x},\tilde{\ell}}\big[\expo^{-r(\hat{\tau}^{q}_{i})}u^{\varepsilon_{n},\delta_{\hat{n}}}_{\ell_{i}}&(X_{{\hat{\tau}}^{q}_{i}})\big]\leq\E_{\tilde{x},\tilde{\ell}}\bigg[[u^{\varepsilon_{n},\delta_{\hat{n}}}_{\ell_{1}}(X_{\tau_{1}})+\vartheta_{\ell_{0},\ell_{1}}]\uno_{\{\tau_{1}=0,\tau_{1}<\tau,i=0\}}\notag\\ 
&\quad+[u^{\varepsilon_{n},\delta_{\hat{n}}}_{\ell_{0}} (X_{\tau_{1}})-u^{\varepsilon_{n},\delta_{\hat{n}}}_{\ell_{1}}(X_{\tau_{1}} )-\vartheta_{\ell_{0},\ell_{1}}]\uno_{\{\tau_{1}=0,\tau_{1}<\tau,i=0\}}\notag\\
&\quad+\expo^{-r(\tmt\wedge{\hat{\tau}}^{q}_{1})}u^{\varepsilon_{n},\delta_{\hat{n}}}_{\ell_{0}}(X_{\tmt\wedge{\hat{\tau}}^{q}_{1}})\uno_{\{\tau_{1}\neq0,i=0\}}+\expo^{-r(\tmt\wedge{\hat{\tau}}^{q}_{i+1})}u^{\varepsilon_{n},\delta_{\hat{n}}}_{\ell_{i}}(X_{\tmt\wedge{\hat{\tau}}^{q}_{i+1}})\uno_{\{i\neq0\}}\notag\\
&\quad+\uno_{\{\tau_{1}\neq0,i=0\}}\int_{0}^{\tmt\wedge{\hat{\tau}}^{q}_{1}}\expo^{-r(\tms)}[h_{\ell_{0}}(X_{\tms})\der\tms+|\deri^{1} u^{\varepsilon_{n},\delta_{\hat{n}}}_{\ell_{0}}(X_{\tms})|\der\zeta^{\comp}_{\tms}]\notag\\
&\quad+\uno_{\{i\neq0\}}\int_{{\hat{\tau}}^{q}_{i}}^{\tmt\wedge{\hat{\tau}}^{q}_{i+1}}\expo^{-r(\tms)}[h_{\ell_{i}}(X_{\tms})\der\tms+|\deri^{1} u^{\varepsilon_{n},\delta_{\hat{n}}}_{\ell_{i}}(X_{\tms})|\der\zeta^{\comp}_{\tms}]\notag\\
&\quad+\sum_{0\leq\tms<\tmt\wedge\hat{\tau}^{q}}\expo^{-r(\tms)}\Delta\zeta_{\tms}\int_{0}^{1}|\deri^{1} u^{\varepsilon_{n},\delta_{\hat{n}}}_{\ell_{0}}( X_{\tms-}-\lambda \mathbb{n}_{\tms}\Delta\zeta_{\tms})|\der\lambda\uno_{\{\tau_{1}\neq0,i=0\}}\notag\\
&\quad+\sum_{\hat{\tau}^{q}_{i}\leq\tms<\tmt\wedge\hat{\tau}^{q}_{i+1}}\expo^{-r(\tms)}\Delta\zeta_{\tms}\int_{0}^{1}|\deri^{1} u^{\varepsilon_{n},\delta_{\hat{n}}}_{\ell_{i}}( X_{\tms-}-\lambda \mathbb{n}_{\tms}\Delta\zeta_{\tms})|\der\lambda\uno_{\{i\neq0\}}\bigg],
\end{align}
 Notice that  for  $\tms\in[\hat{\tau}^{q}_{i},\tmt\wedge\hat{\tau}^{q}_{i+1})$, 
 \begin{align*}
 &|u^{\varepsilon_{n},\delta_{\hat{n}}}_{\ell_{i}}(X_{\tms})-u_{\ell_{i}}(X_{\tms})|\leq\max_{(x,\ell)\in\set_{q}\times\mathbb{I}}|u^{\varepsilon_{n},\delta_{\hat{n}}}_{\ell}(x)-u_{\ell}(x)|\underset{\varepsilon_{n},\delta_{\hat{n}}\rightarrow0}{\longrightarrow}0,\\ &|\partial_{j}u^{\varepsilon_{n},\delta_{\hat{n}}}_{\ell_{i}}(X_{\tms})-\partial_{j}u_{\ell_{i}}(X_{\tms})|\leq\max_{(x,\ell)\in\set_{q}\times\mathbb{I}}|\partial_{j}u^{\varepsilon_{n},\delta_{\hat{n}}}_{\ell}(x)-\partial_{j}u_{\ell}(x)|\underset{\varepsilon_{n},\delta_{\hat{n}}\rightarrow0}{\longrightarrow}0,\\
 &|\partial_{j}u^{\varepsilon_{n},\delta_{\hat{n}}}_{\ell_{i}}(X_{\tms}-\lambda \mathbb{n}_{\tms}\Delta\zeta_{\tms})-\partial_{j}u_{\ell_{i}}(X_{\tms}-\lambda \mathbb{n}_{\tms}\Delta\zeta_{\tms})|\leq\max_{(x,\ell)\in\set_{q}\times\mathbb{I}}|\partial_{j}u^{\varepsilon_{n},\delta_{\hat{n}}}_{\ell}(x)-\partial_{j}u_{\ell}(x)|\underset{\varepsilon_{n},\delta_{\hat{n}}\rightarrow0}{\longrightarrow}0,
 \end{align*} 
 with $\lambda\in[0,1]$. Then, letting $\varepsilon_{n},\delta_{\hat{n}}\rightarrow0$ in \eqref{s.2}, by  Dominated Convergence Theorem and using $|\deri^{1} u_{\ell_{i}}|\leq g_{\ell_{i}}$,  
\begin{align}\label{s.3}
\E_{\tilde{x},\tilde{\ell}}\big[\expo^{-r(\hat{\tau}^{q}_{i})}u_{\ell_{i}}&(X_{{\hat{\tau}}^{q}_{i}})\big]\leq\E_{\tilde{x},\tilde{\ell}}\bigg[[u _{\ell_{1}}(X_{\tau_{1}})+\vartheta_{\ell_{0},\ell_{1}}]\uno_{\{\tau_{1}=0,\tau_{1}<\tau,i=0\}}\notag\\ 
&\quad+[u _{\ell_{0}} (X_{\tau_{1}})-u _{\ell_{1}}(X_{\tau_{1}} )-\vartheta_{\ell_{0},\ell_{1}}]\uno_{\{\tau_{1}=0,\tau_{1}<\tau,i=0\}}\notag\\
&\quad+\expo^{-r(\tmt\wedge{\hat{\tau}}^{q}_{1})}u _{\ell_{0}}(X_{\tmt\wedge{\hat{\tau}}^{q}_{1}})\uno_{\{\tau_{1}\neq0,i=0\}}+\expo^{-r(\tmt\wedge{\hat{\tau}}^{q}_{i+1})}u _{\ell_{i}}(X_{\tmt\wedge{\hat{\tau}}^{q}_{i+1}})\uno_{\{i\neq0\}}\notag\\
&\quad+\uno_{\{\tau_{1}\neq0,i=0\}}\int_{0}^{\tmt\wedge{\hat{\tau}}^{q}_{1}}\expo^{-r(\tms)}[h_{\ell_{0}}(X_{\tms})\der\tms+g_{\ell_{0}}(X_{\tms-})\circ\der\zeta^{\comp}_{\tms}]\notag\\
&\quad+\uno_{\{i\neq0\}}\int_{{\hat{\tau}}^{q}_{i}}^{\tmt\wedge{\hat{\tau}}^{q}_{i+1}}\expo^{-r(\tms)}[h_{\ell_{i}}(X_{\tms})\der\tms+g_{\ell_{i}}(X_{\tms-})\circ\der\zeta_{\tms}]\bigg],
\end{align}
with $g_{\ell_{i}}(X_{\tms-})\circ\der\zeta_{\tms-}$ as in \eqref{eq0.1.1}. Again, letting $q\rightarrow\infty$ and $\tmt\rightarrow\infty$ in \eqref{s.3}, it implies that
\begin{multline}\label{s.4}
\E_{\tilde{x},\tilde{\ell}}\Big[\expo^{-r(\tilde{\tau}_{i})}u _{\ell_{i}}(X_{{\tilde{\tau}}_{i}})\Big]\leq\E_{\tilde{x},\tilde{\ell}}\bigg[\{\expo^{-r(\tau_{i+1})}u_{\ell_{i+1}} (X_{\tau_{i+1}})+\mathcal{D}[\tau_{i+1},\ell_{i},\ell_{i+1},X, u ]\}\uno_{\{\tau_{i+1}<\tau\}}\\
+\int_{{\tilde{\tau}}_{i}}^{{\tilde{\tau}}_{i+1}}\expo^{-r(\tms)}[h_{\ell_{i}}(X_{\tms})\der \tms+g_{\ell_{i}}(X_{\tms-})\circ\der\zeta_{\tms}]+\expo^{-r(\tau_{i+1})}\vartheta_{\ell_{i},\ell_{i+1}}\uno_{\{\tau_{i+1}<\tau\}}\bigg]\quad\text{for}\ i\geq0,
\end{multline}
with $\tilde{\tau}_{i}=\tau_{i}\wedge\tau$ and $\mathcal{D}[\tau_{i+1},\ell_{i},\ell_{i+1},X, u ]$ as in \eqref{ex5}. Noticing that 
$$\mathcal{D}[\tau_{i+1},\ell_{i},\ell_{i+1},X, u ]\uno_{\{\tau_{i+1}<\tau\}}\leq0$$ 
and using \eqref{esd1.1.1}, \eqref{s.4}, it yields that  $u_{\tilde{\ell}}(\tilde{x})\leq V_{\xi,\varsigma}(\tilde{x},\tilde{\ell})$ for each $(\tilde{x},\tilde{\ell})\in\overline{\set}\times\mathbb{I}$. Since the previous property is true for each control $(\xi,\varsigma)\in\mathcal{U}\times\mathcal{S}$, we conclude that $u_{\tilde{\ell}}(\tilde{x})\leq V_{\tilde{\ell}}(\tilde{x})$ for each $(\tilde{x},\tilde{\ell})\in\overline{\set}\times\mathbb{I}$.
\end{proof}

\subsection{About penalized absolutely continuous /switching  optimal controls}
Previously, we checked that the value function $V_{ \ell}$, defined in \eqref{vf1}, satisfies the HJB \eqref{esd5}. This means that for each $\ell\in\mathbb{I}$ fixed, the domain set $\set$ is divided into three parts.  Consider $\mathcal{C}_{\ell}\eqdef\set\setminus(\mathcal{N}_{\ell}\cup \mathcal{S}_{\ell})$, where 
\begin{equation*}
\mathcal{N}_{\ell}\eqdef\{x\in\set:|\deri^{1}V_{\ell}|= g_{\ell}\}\quad \text{and} \quad \mathcal{S}_{\ell}=\{x\in\set:V_{\ell}=\mathcal{M}_{\ell}V\}.
\end{equation*}
The open set $\mathcal{C}_{\ell}$ is where $V_{\ell}$  satisfies the elliptic partial differential equation $[c_{\ell}-\dif_{\ell}] V_{\ell}=h_{\ell}$, which suggests that the `optimal  control' $(\xi^{*},\varsigma^{*})$ corresponding to this problem will not be exercised when the process  $X^{\xi^{*},\varsigma^{*}}$ is in $\mathcal{C}_{\ell}$. Otherwise, either  $\xi^{*}$ or $\varsigma^{*}$ will be exercised on $X^{\xi^{*},\varsigma^{*}}$ in the following way:
\begin{enumerate}
\item[(i)] if $X^{\xi^{*},\varsigma^{*}}\in\mathcal{N}_{\ell}\setminus\mathcal{S}_{\ell}$, the singular control $\xi^{*}$ will act on $X^{\xi^{*},\varsigma^{*}}$  in such a way that $ X^{\xi^{*},\varsigma^{*}} $ will be pushed back to some point $y \in\partial\mathcal{C}_{\ell}$;
\item[(ii)] if $X^{\xi^{*},\varsigma^{*}}\in \mathcal{S}_{\ell}$, the switching control $\varsigma^{*}$ will be executed  in such a way that the process $ X^{\xi^{*},\varsigma^{*}} $ will switch   to some regime $\kappa\neq\ell$ at time $\tau_{\kappa}\leq\tau$.
\end{enumerate}
The construction of  an optimal strategy to the problem \eqref{vf1} still remains an open problem of interest. A way for doing this construction  is verifying firstly that $\partial\mathcal{C}_{\ell}$ is at least of class $\hol^{1}$, which is not easy to get.

 In the literature, we can see that the existence of an optimal dividend/switching strategy, which is an example of a mixed singular/switching control problem, has been solved when the payoff expected value is given by \eqref{py1} and the cash reserve process switches between $m$-regimes governed by different Poisson processes with drifts, see \cite{AM2015}.  The authors proved that their solution  is stationary with a band structure.

Another way to address the problem \eqref{vf1} is by means of $\varepsilon$-penalized absolutely continuous/switching optimal controls which have been  constructed in \eqref{opt1}--\eqref{opt10}. 


By Lemma \ref{convexu1.0} and the proof of Theorem \ref{verf2}, it is known  that for each $\ell\in\mathbb{I}$, $V^{\varepsilon_{n}}_{\ell}\rightarrow V_{\ell}$ as $\varepsilon_{n}\downarrow0$  and
$V_{\ell}\leq V_{\xi^{\varepsilon_{n},*},\varsigma^{\varepsilon_{n},*}}(\cdot, \ell)\leq V^{\varepsilon_{n}}_{\ell}\ \text{on $\overline{\set}$}$,
with $(\xi^{\varepsilon_{n},*},\varsigma^{\varepsilon_{n},*})$ as in \eqref{opt1}--\eqref{opt10}, and  $V_{\xi^{\varepsilon_{n},*},\varsigma^{\varepsilon_{n},*}}(\cdot,\ell),V_{\ell}, V^{\varepsilon_{n}}_{\ell}$ given by \eqref{esd1.1.1}, \eqref{vf1} and \eqref{Vfp1}, respectively.

 Taking $\varepsilon_{n}$ small enough and considering that the process $X^{ \varepsilon_{n},*}_{\tmt}$ is on the regime $\ell^{*}_{i}$ at time $\tmt\in[\tau_{i}^{{*}},\tilde{\tau}_{i+1}^{*})$, we have that $X^{ \varepsilon_{n},*}_{\tmt}\in\mathcal{C}_{\ell_{i}^{*}}^{\varepsilon_{n}}$ and the control $(\xi^{\varepsilon_{n},*},\varsigma^{\varepsilon_{n},*})$ will be  exercised as follows:  
 \begin{enumerate}
 \item[(i)] if the controlled process $X^{ \varepsilon_{n},*}$ satisfies $|\deri^{1}V^{\varepsilon_{n}}_{\ell_{i}^{*}}(X^{\varepsilon_{n},*}_{\tmt})|\leq g_{\ell_{i}^{*}}(X^{\varepsilon_{n},*}_{\tmt})$,  then $\zeta^{\varepsilon_{n},*}_{\tmt}\equiv0$ and $X^{\varepsilon_{n},*}_{\tmt}$ will stay in $\mathcal{C}_{\ell_{i}^{*}}$;
 
 \item[(ii)] if $0<|\deri^{1}V^{\varepsilon_{n}}_{\ell_{i}^{*}}(X^{\varepsilon_{n},*}_{\tmt})|^{2}-g_{\ell_{i}^{*}}(X^{\varepsilon_{n},*}_{\tmt})^{2}<2\varepsilon_{n}$, the process $X^{\varepsilon_{n},*}_{\tmt}$ will be crossing  $\partial\mathcal{C}_{\ell_{i}^{*}}$ persistently; 
 \item[(iii)]otherwise, $\xi^{\varepsilon_{n},*}_{\tmt}=(\mathbb{n}^{\varepsilon_{n},*}_{\tmt},\zeta^{\varepsilon_{n},*}_{\tmt})$   will exercise  a force $\frac{2}{\varepsilon_{n}}|\deri^{1}V^{\varepsilon_{n}}_{\ell_{i}^{*}}(X^{\varepsilon_{n},*}_{\tmt})|$ and in the direction $-\frac{\deri^{1}V^{\varepsilon_{n}}_{\ell_{i}^{*}}(X^{\varepsilon_{n},*}_{\tmt})}{|\deri^{1}V^{\varepsilon_{n}}_{\ell_{i}^{*}}(X^{\varepsilon_{n},*}_{\tmt})|}$ at $X^{\varepsilon_{n},*}_{\tmt}$ in such a way that it will be pushed back to  $\partial\mathcal{C}_{\ell_{i}^{*}}$. 
 
 \item[(iv)] At time $\tmt=\tau_{i+1}^{*}<\tau^{*}$,   $X^{\varepsilon_{n},*}_{\tau_{i+1}^{*}}$ will be for the first time in $\mathcal{S}^{\varepsilon_{n}}_{\ell_{i}^{*}}$ and will switch to the regime $\ell_{i+1}^{*}$. 
 \end{enumerate}
 This procedure will be repeated until the time $\tau^{*}$ which represents the first exit time of the process $X^{\varepsilon_{n},*}$ from the set $\set$.

\section{Conclusions  and some further work}\label{conc}
Under Assumptions \eqref{h3}--\eqref{h2},  the existence and uniqueness for the strong  solutions to the HJB  equations \eqref{esd5} and \eqref{pc1} were guaranteed on the space $\hol^{0,1}(\overline{\set})\cap\sob^{2,\infty}_{\loc}(\set)$. After that, it was proven that the value function $V^{\varepsilon}$  for the penalized absolutely continuous/switching control problem seen in Subsection \ref{penal1} satisfies \eqref{pc1}, showing  that for  $\ell\in\mathbb{I}$,   $V^{\varepsilon}_{\ell}\in\hol^{0,1}(\overline{\set})\cap\sob^{2,\infty}_{\loc}(\set)$. Finally, assuming also that the domain set $\set$ is an open convex set and by probabilistic arguments, it was verified that the value function $V$, given in \eqref{vf1}, is characterized as a    limit of $V^{\varepsilon}$ as $\varepsilon\downarrow0$ which permitted  to conclude that  $V$ satisfies \eqref{esd5} and from here, to see that for $\ell\in\mathbb{I}$,  $V_{\ell}\in\hol^{0,1}(\overline{\set})\cap\sob^{2,\infty}_{\loc}(\set)$.

 Although the optimal control process for the mixed singular/switching  stochastic control  problem   \eqref{vf1} was not given explicitly, and this is still an open problem, we constructed a family of $\varepsilon$-penalized absolutely continuous/switching optimal control processes $\{(\xi^{\varepsilon_{n},*},\varsigma^{\varepsilon_{n},*})\}_{n\geq1}$; see \eqref{opt1}--\eqref{opt10}, such that the limit of their value functions $V^{\varepsilon_{n}}$ (as $\varepsilon_{n}\rightarrow0$) agrees with the value function $V$.

There are some extensions to be considered and directions for future research. Some of them could be:

\begin{enumerate}
	\item  To study the value function $V$ given in \eqref{vf1} when the infinitesimal generator of the process $ X^{\xi,\varsigma}$, without the influence of the singular control $\xi$ in $X^{\xi,\varsigma}$, within the regime $\ell$, is given by 
	\begin{align*}
	\dif_{\ell} u_{\ell} &=\tr[a_{\ell} \deri^{2}u_{\ell} ]-\langle b _{\ell} ,\deri^{1}u_{\ell} \rangle\\
	&\quad+\int_{\R^{d}_{*}}[u_{\ell}(x+z)-u_{\ell}(x)-\langle\deri^{1}u_{\ell},z\rangle\uno_{\{|z|\in(0,1)\}}]s_{\ell}(x,z)\nu(\der z),
	\end{align*} 
	 where $\nu$ is a Radon measure on $\R^{d}_{*}\eqdef \R^{d}\setminus\{0\}$  {satisfying} $\int_{\R^{d}_{*}}[|z|^{2}\wedge 1]\nu(\der z)< \infty$, and  $s_{\ell}:\overline{\set}\times\R^{d}\longrightarrow[0,1]$ is such that $\int_{\R^{d}_{*}}s_{\ell}(x,z)\uno_{\{x+z\notin\set\}}\nu(\der z)<\infty$  for $x\in\set$. In this case, the main difficulty lies in obtaining some  \`a priori estimates  of $\int_{\{|z|\in(0,1)\}}\big[\int_{0}^{1} |\deri^{2}u^{\varepsilon}(\cdot+tz)|\der t\big]|z|^{2}s_{\ell}(\cdot,z)\nu(\der z)$ that are independent of $\varepsilon$.
	
	\item To analyze the problem given in \eqref{es1}--\eqref{vf1.0} on the whole space $\R^{d}$. In this direction, a technically involved problem could be studied when the controlled process can switch between $m$-regimes which are governed  by Brownian motions with different drifts.  
\end{enumerate}

\appendix
\section{Appendix}

 {Recall that Lemma \ref{Lb1} and Proposition  \ref{princ1.0} are under assumptions \eqref{h3}--\eqref{h2}.} Let us first show \eqref{ap1} and \eqref{ap2} of Lemma \ref{Lb1}, which helps to verify the existence of the classic solution $u^{\varepsilon, \delta}$  to  the NPDS \eqref{NPD.1}.  Afterwards,  to  finalize the proof of Proposition  \ref{princ1.0}, we shall prove \eqref{ap3} of Lemma \ref{Lb1}.  {From now on, for simplicity of notation, we replace $u^{\varepsilon,\delta}$ by $u$ in the proofs.} 

\subsection{Verification of \eqref{ap1} and \eqref{ap2} }\label{a1}
\begin{proof}[Proof of Lemma \ref{Lb1}. Eq. \eqref{ap1}]
	Let $(x_{0},\ell_{0})\in\overline{\set}\times\mathbb{I}$ be such that $u_{\ell_{0}}(x_{0})=\min_{x\in \overline{\set},\ell\in\mathbb{I}}u_{\ell}(x)$. If $x_{0}\in\partial\set$, it follows easily that $u_{\ell}(x)\geq u_{\ell_{0}}(x_{0})=0$ for all $(x,\ell)\in\overline{\set}\times\mathbb{I}$. Suppose that $x_{0}\in\set$. Since $u_{\ell_{0}}=0$ in $\partial\set$, we have that $u_{\ell_{0}}(x_{0})\leq0$. On the other hand, we know that 
	\begin{equation}\label{eq.1}
	\begin{split}
	&\deri^{1} u_{\ell_{0}}(x_{0})=0,\quad  \tr[a_{\ell_{0}}(x_{0})\deri^{2}u_{\ell_{0}}(x_{0})]\geq0,\\ &u_{\ell_{0}}(x_{0})-u_{\kappa}(x_{0})\leq 0,\ \text{for}\ \kappa\in\mathbb{I}\setminus\{\ell_{0}\}. 
	\end{split}
	\end{equation}
	Then, using \eqref{a4}, \eqref{NPD.1} and \eqref{eq.1}, $0\leq\tr[a_{\ell_{0}}\deri^{2}u_{\ell_{0}}]= c_{\ell_{0}}u_{\ell_{0}}-h_{\ell_{0}}\leq c_{\ell_{0}}u_{\ell_{0}},\ \text{at}\ x_{0}$.
	Since $c_{\ell_{0}}>0$ on $\overline{\set}$, it follows that $u_{\ell_{0}}(x_{0})\geq0$. Therefore $u_{\ell}(x)\geq u_{\ell_{0}}(x_{0})=0$, for all $x\in\overline{\set}$ and $\ell\in\mathbb{I}$. 
	
	For each $\ell\in\mathbb{I}$, consider $\tilde{v}_{\ell}$  as the unique solution to the  Dirichlet problem $[c_{\ell}-\dif_{\ell}] \tilde{v}_{\ell}=h_{\ell}$  in $\set$, s.t $\tilde{v}_{\ell}=0$, on $\partial\set$. By Theorem 1.2.10 of \cite{gilb}, it is well know that   $\tilde{v}_{\ell}\in\hol^{2,\alpha'}(\overline{\set})$  and $||\tilde{v}_{\ell}||_{\hol^{2,\alpha'}(\overline{\set})}\leq K_{2} ||h_{\ell}||_{\hol^{0,\alpha'}(\overline{\set})}\leq K_{2}\Lambda\defeq C_{1}, $ 
	where $K_{2}=K_{2}(d,\Lambda ,\alpha')$,  since \eqref{h0}--\eqref{h2} hold.  Meanwhile, from Eq. \eqref{NPD.1}, it can been seen  that for each $\ell\in\mathbb{I}$, $ [c_{\ell}-\dif_{\ell}]u_{\ell}\leq h_{\ell}$ in $\set$. Then, taking $\eta_{\ell}\eqdef u_{\ell}-\tilde{v}_{\ell}$, we get for each $\ell\in\mathbb{I}$,
	\begin{equation}\label{sup1.1}
	[c_{\ell}-\dif_{\ell}]\eta_{\ell}\leq0,\ \text{in}\ \set,\quad\text{s.t.}\ 
	\eta_{\ell}=0,\ \text{on}\ \partial\set.
	\end{equation}
	Let $x^{*}_{\ell}\in\overline{\set}$ be a maximum point of $\eta_{\ell}$. If $x^{*}_{\ell}\in\partial{\set}$, trivially we have $u_{\ell}-\tilde{v}_{\ell}\leq0$ in $\overline{\set}$. Suppose that $x^{*}_{\ell}\in \set$. Note $\eta_{\ell}(x^{*}_{\ell})\geq0$ and 
	\begin{equation}\label{sup2}
	\deri^{1}\eta_{\ell}(x^{*}_{\ell})=0,\quad \tr[a_{\ell}(x^{*}_{\ell})\deri^{2}\eta_{\ell}(x^{*}_{\ell})]\leq 0.
	\end{equation}
	Then, using \eqref{sup1.1}--\eqref{sup2},
	$0\geq\tr[a_{\ell}\deri^{2}\eta_{\ell}]\geq c_{\ell}\eta_{\ell}$, at $x^{*}_{\ell}$. Hence,  {$\eta_{\ell}\leq0$ at $x^{*}_{\ell}$, since $c_{\ell}>0$ on $\overline{\set}$}. We conclude $[u_{\ell}-v_{\ell}](x)\leq[u_{\ell}-v_{\ell}](x^{*}_{\ell})=0$ for $x\in\overline{\set}$. Therefore, for each $\ell\in\mathbb{I}$,   $0\leq u_{\ell}\leq C_{1}$ on $\overline{\set}$. 
\end{proof}

\begin{proof}[Proof of Lemma \ref{Lb1}. Eq. \eqref{ap2}]
	For each $\ell\in\mathbb{I}$, consider the auxiliary function $w_{\ell}\eqdef|\deri^{1}u_{\ell}|^{2}-\lambda  A_{\varepsilon,\delta}u_{\ell}$, on $\overline{\set}$, where   $A_{\varepsilon,\delta}\eqdef\max_{(x,\ell)\in\overline{\set}\times\mathbb{I}}|\deri^{1}u_{\ell}(x)|$ and $\lambda\geq1$ a constant that shall be selected later on. Observe that if $  A_{\varepsilon,\delta}\leq1$, we obtain a bound for $  A_{\varepsilon,\delta}$ that is independent of  $\varepsilon,\delta$. Hence, we obtain the statement given in the lemma above.  We assume henceforth that $  A_{\varepsilon,\delta}>1$.  Taking first and second derivatives to $w_{\ell}$, it can be checked that
	\begin{align}\label{dercot1}
	-\tr[a_{\ell}\deri^{2}w_{\ell}]&=-2\sum_{i}\langle a_{\ell}\deri^{1}\partial_{i}u_{\ell},\deri^{1}\partial_{i}u_{\ell}\rangle\notag\\
	&\quad-2\sum_{i}\tr [a_{\ell}\deri^{2}\partial_{i}u_{\ell}]\partial_{i}u_{\ell}+\lambda  A_{\varepsilon,\delta}\tr[a_{\ell}\deri^{2}u_{\ell}].
	\end{align}	 
	Meanwhile, from \eqref{eq2} and \eqref{NPD.1},
	\begin{equation}
	\lambda  A_{\varepsilon,\delta}\tr[a_{\ell}\deri^{2} u_{\ell}]=  \lambda  A_{\varepsilon,\delta}\bigg[\widetilde{D}_{1}u_{\ell}+\psi_{\varepsilon,\ell}(\cdot)+\displaystyle\sum_{\kappa\in\mathbb{I}\setminus\{\ell\}}\psi_{\delta,\ell,\kappa}(\cdot) \bigg],
	\end{equation}
	where $\psi_{\varepsilon,\ell}(\cdot)$, $\psi_{\delta,\ell,\kappa}(\cdot)$  denote $\psi_{\varepsilon}(|\deri^{1}u_{\ell}|^{2}-g_{\ell}^{2})$,  $\psi_{\delta}(u_{\ell}-u_{\kappa} -\vartheta_{\ell,\kappa})$, respectively, and $\widetilde{D}_{1}u_{\ell}\eqdef\langle b_{\ell},\deri^{1}u_{\ell}\rangle+c_{\ell}u_{\ell}-h_{\ell}$. Now, differentiating \eqref{NPD.1}, multiplying by $2\partial_{i}u$ and taking summation over all $i$'s, we see that
	\begin{align}\label{dercot2}
	-2\sum_{i}\tr[a_{\ell}\deri^{2}\partial_{i} u_{\ell}]\partial_{i} u_{\ell}
	&=\widetilde{D}_{2} u_{\ell} -2\psi'_{\varepsilon,\ell}(\cdot)\langle\deri^{1} u_{\ell},\deri^{1}[|\deri^{1} u_{\ell}|^{2}-g_{\ell}^{2}]\rangle\notag\\
	&\quad-2\sum_{\kappa\in\mathbb{I}\setminus\{\ell\}}\psi'_{\delta,\ell,\kappa}(\cdot)[|\deri^{1}u_{\ell}|^{2}-\langle\deri^{1}u_{\ell},\deri^{1}u_{\kappa} \rangle],
	\end{align}
	where  
	\begin{equation}\label{dercot2.1}
	\widetilde{D}_{2}u_{\ell}\eqdef 2\sum_{k}\partial_{k}u_{\ell}\tr[[\partial_{k}a_{\ell}]\deri^{2}u_{\ell}]
	-2\langle\deri^{1}u_{\ell},\deri^{1}[\langle b_{\ell},\deri^{1}u_{\ell}\rangle+c_{\ell}u_{\ell}-h_{\ell}]\rangle.
	\end{equation}
	Then, from \eqref{dercot1}--\eqref{dercot2}, it can be shown that 
	\begin{align*}
	-\tr[a_{\ell}\deri^{2}w_{\ell}]&=-2\sum_{i}\langle a_{\ell}\deri^{1}\partial_{i}u_{\ell},\deri^{1}\partial_{i}u_{\ell}\rangle+\widetilde{D}_{2}u_{\ell} +\lambda   A_{\varepsilon,\delta} \widetilde{D}_{1}u_{\ell}\notag\\
	&\quad-2\psi'_{\varepsilon,\ell}(\cdot)\langle\deri^{1}u_{\ell},\deri^{1}[|\deri^{1}u_{\ell}|^{2}-g_{\ell}^{2}]\rangle+\lambda   A_{\varepsilon,\delta}\psi_{\varepsilon,\ell}(\cdot)\notag\\
	&\quad-\sum_{\kappa\in\mathbb{I}\setminus\{\ell\}}\big\{2\psi'_{\delta,\ell,\kappa}(\cdot)[|\deri^{1}u_{\ell}|^{2}-\langle\deri^{1}u_{\ell},\deri^{1}u_{\kappa} \rangle]-\lambda   A_{\varepsilon,\delta}\psi_{\delta,\ell,\kappa}(\cdot)\big\},\quad \notag \text{on}\ \set.
	\end{align*}
	By \eqref{a4}, \eqref{h2} and \eqref{ap1}, notice that
	\begin{align}\label{dercot1.1}
	-2\sum_{i}&\langle a_{\ell}\deri^{1}\partial_{i}u_{\ell},\deri^{1}\partial_{i}u_{\ell}\rangle+\widetilde{D}_{2}u_{\ell} +\lambda   A_{\varepsilon,\delta} \widetilde{D}_{1}u_{\ell}\notag\\
	&\leq2\Lambda d|\deri^{1}u_{\ell}|^{2}-\theta|\deri^{2}u_{\ell}|^{2}+2\Lambda[1+d^{3}]|\deri^{1}u_{\ell}|\,|\deri^{2}u_{\ell}|\notag\\
	&\quad+2\Lambda[1+C_{1}]|\deri^{1}u_{\ell}|+\lambda A_{\varepsilon,\delta}\Lambda[|\deri^{1}u_{\ell}|+C_{1}]\notag\\
	&\leq\bigg[2\Lambda d+\frac{\Lambda^{2}[1+d^{3}]^{2}}{\theta}\bigg]|\deri^{1}u_{\ell}|^{2}+2\Lambda[1+C_{1}]|\deri^{1}u_{\ell}|+\lambda A_{\varepsilon,\delta}\Lambda[|\deri^{1}u_{\ell}|+C_{1}].
	\end{align} 
	Then, by \eqref{dercot1.1} and using  convexity property of   $\psi_{\cdot}$, i.e. $\psi_{\cdot}(r)\leq \psi'_{\cdot}(r)r$, for all $r\in\R$,  we see that  
	\begin{align}\label{partu4}
	&-\tr[a_{\ell}\deri^{2}w_{\ell}]\leq K_{3}|\deri^{1}u_{\ell}|^{2}+K_{3} [1+  \lambda A_{\varepsilon,\delta}]|\deri^{1}u_{\ell}|+\lambda K_{3}   A_{\varepsilon,\delta}\notag\\
	&\quad-\psi'_{\varepsilon,\ell}(\cdot)\big\{2\langle\deri^{1}u_{\ell},\deri^{1}|\deri^{1}u_{\ell}|^{2}\rangle-K_{3}|\deri^{1}u_{\ell}|-\lambda   A_{\varepsilon,\delta} [|\deri^{1}u_{\ell}|^{2}- g_{\ell}^{2}]\big\}\notag\\
	&\quad-\sum_{\kappa\in\mathbb{I}\setminus\{\ell\}}\psi'_{\delta,\ell,\kappa}(\cdot)\big\{2[|\deri^{1}u_{\ell}|^{2}-\langle\deri^{1}u_{\ell},\deri^{1}u_{\kappa} \rangle]-\lambda   A_{\varepsilon,\delta}[u_{\ell}-u_{\kappa} -\vartheta_{\ell,\kappa}]\big\},\quad\text{on $\set$,}
	\end{align}
	for some $K_{3}=K_{3}(d,\Lambda ,\alpha')$. Let $(x_{\lambda},\ell_{\lambda})\in\overline{\set}\times\mathbb{I}$  {(depending on $\lambda$)} be such that $w_{\ell_{\lambda}}(x_{\lambda})=\max_{(x,\ell)\in\overline{\set}\times\mathbb{I}}w_{\ell}(x)$.  From here, \eqref{ap1} and by definition of $w_{\ell}$, we see that 
	\begin{align}\label{partu5}
	|\deri^{1}u_{\ell}(x)|^{2}
	&\leq|\deri^{1}u_{\ell_{\lambda}}(x_{\lambda})|^{2}+\lambda   A_{\varepsilon,\delta}C_{1},\quad  {\text{for}\  (x,\ell)\in \overline{\set}\times\mathbb{I}.}
	\end{align}
	So, it  suffices to bound $|\deri^{1}u_{\ell_{\lambda}}(x_{\lambda})|^{2}$ by a positive constant $C=C(d,\Lambda,\alpha')$.  If $x_{\lambda}\in\partial \set$,  by \eqref{ap1}, it can be verified that $|\deri^{1}u_{\ell_{\lambda}}|\leq d^{\frac{1}{2}}C_{1}$ on $\partial{\set}$. Then, from (\ref{partu5}), 
	\begin{equation}\label{e1}
	|\deri^{1}u_{\ell}(x)|^{2}\leq C_{1}[dC_{1}+\lambda   A_{\varepsilon,\delta}] ,\quad  {\text{for}\  (x,\ell)\in\overline{\set}\times\mathbb{I}.}
	\end{equation}
	On the other hand, observe that for each $\varrho>0$, there exists $x_{\varrho}\in\overline{\set}$ such that 
	\begin{equation}\label{e2}
	[  A_{\varepsilon,\delta}-\varrho]^{2}\leq|\deri^{1} u_{\ell}(x_{\varrho})|^{2}.
	\end{equation}
	Using \eqref{e1} in \eqref{e2} and letting $\varrho\rightarrow0$,  
	\begin{equation}\label{e3}
	A_{\varepsilon,\delta}^{2}\leq C_{1}[dC_{1}+\lambda   A_{\varepsilon,\delta}] .
	\end{equation}
	Multiplying by $1/A_{\varepsilon,\delta}$ in \eqref{e3} and since $A_{\varepsilon,\delta}>1$, we get 
	\begin{equation}\label{e4}
	|\deri^{1}u_{\ell}(x)|\leq   A_{\varepsilon,\delta}\leq dC_{1}^{2}+\lambda  C_{1},\quad  \text{for}\ (x,\ell)\in\overline{\set}\times\mathbb{I}.
	\end{equation}
	In this case, considering  $\lambda>1$ fixed and $C_{2}\eqdef C_{1}[dC_{1}+\lambda]$, we obtain the  result that is proposed in the lemma.  Let $x_{\lambda}$ be in $\set$. It is known that $\partial_{i}|\deri^{1}u_{\ell_{\lambda}}|^{2}-\lambda   A_{\varepsilon,\delta}\partial_{i}u_{\ell_{\lambda}}=0$ at $x_{\lambda}$. Then, 
	\begin{equation}\label{max.1}
	2\langle\deri^{1}u_{\ell_{\lambda}},\deri^{1}|\deri^{1}u_{\ell_{\lambda}}|^{2}\rangle=2\lambda   A_{\varepsilon,\delta}|\deri^{1}u_{\ell_{\lambda}}|^{2},\ \text{at}\   x_{\lambda}. 
	\end{equation}
	Also, since  $\vartheta_{\ell_{\lambda},\kappa}\geq0$ and   $w_{\ell_{\lambda}}(x_{\lambda})-w_{\kappa}(x_{\lambda} )\geq0$, for each $\kappa\in\mathbb{I}$, and  $|y_{1}|^{2}-|y_{2}|^{2}=2[|y_{1}|^{2}-\langle y_{1},y_{2}\rangle]-|y_{1}-y_{2}|^{2}\leq2[|y_{1}|^{2}-\langle y_{1},y_{2}\rangle]$, for $y_{1},y_{2}\in\R^{d},$ 
	it implies
	\begin{equation}\label{max.2}
	2[|\deri^{1}u_{\ell_{\lambda}}|^{2}-\langle \deri^{1}u_{\ell_{\lambda}},\deri^{1}u_{\kappa} \rangle]-\lambda   A_{\varepsilon,\delta}[u_{\ell_{\lambda}}-u_{\kappa} -\vartheta_{\ell_{\lambda},\kappa}]\geq0,\ \text{at}\ x_{\lambda}. 
	\end{equation}
	By \eqref{partu4}, \eqref{max.1},\eqref{max.2} and since $\tr[a_{\ell_{\lambda}}(x_{\lambda})\deri^{2}w_{\ell_{\lambda}}(x_{\lambda})]\leq0$, we have
	\begin{align}\label{partu4.1}
	0\leq-\tr[a_{\ell_{\lambda}}\deri^{2}w_{\ell_{\lambda}}]&\leq K_{3}|\deri^{1}u_{\ell_{\lambda}}|^{2}+K_{3} [1+  \lambda A_{\varepsilon,\delta}]|\deri^{1}u_{\ell_{\lambda}}|+\lambda K_{3}   A_{\varepsilon,\delta}\notag\\
	&\quad-\psi'_{\varepsilon,\ell_{\lambda}}(\cdot)[\lambda   A_{\varepsilon,\delta}|\deri^{1}u_{\ell_{\lambda}}|^{2}-K_{3}|\deri^{1}u_{\ell_{\lambda}}|+\lambda   A_{\varepsilon,\delta} g_{\ell_{\lambda}}^{2}],\ \text{at}\ x_{\lambda}.
	\end{align}
	On the other hand, notice that either $\psi'_{\varepsilon,\ell_{\lambda}}(\cdot)<\frac{1}{\varepsilon}$ or $\psi'_{\varepsilon,\ell_{\lambda}}(\cdot)=\frac{1}{\varepsilon}$  at $x_{\lambda}$. If $\psi'_{\varepsilon,\ell_{\lambda}}(\cdot)<\frac{1}{\varepsilon}$ at $x_{\lambda}$, by definition of $\psi_{\varepsilon}$, given in (\ref{p12.1}), it follows that  $|\deri^{2}u^{\varepsilon}(x_{\lambda})|^{2}-g_{\ell}(x_{\lambda})^{2}\leq2\varepsilon$. 
	It implies that 
	$|\deri^{1}u_{\ell_{\lambda}}(x_{\lambda})|^{2}\leq \Lambda^{2}+2$.
	Then, by (\ref{partu5}) and arguing as  in \eqref{e4}, we obtain $ |\deri^{1}u_{\ell}(x)|\leq   A_{\varepsilon,\delta}\leq 2+\Lambda^{2}+\lambda C_{1}$, for each $(x,\ell)\in\overline{\set}\times\mathbb{I}$. For $\lambda>1$ fixed, it is obtained the result given in the lemma. Now, assume that  $\psi'_{\varepsilon,\ell_{\lambda}}(\cdot)=\frac{1}{\varepsilon}$.  Then, taking $\lambda>\max\{1,K_{3}\}$ fixed, using (\ref{partu4.1}) and proceeding of a similar way than \eqref{e4}, we get  
	$0\leq [K_{3}-\lambda]|\deri^{1}u_{\ell_{\lambda}}|^{2}+K_{3}[2+\lambda]|\deri^{1}u_{\ell_{\lambda}}|+\lambda K_{3},\ \text{at}\ x_{\lambda}$.  From here,  it yields that  $|\deri^{1}u_{\ell_{\lambda}}(x_{\lambda})|<K_{4}$, for some $K_{4}=K_{4}(d,\Lambda ,\alpha')$. Using (\ref{partu5}) and by similar arguments that (\ref{e3}), we conclude that there exists $C_{2}=C_{2}(d,\Lambda ,\alpha')$ such that   $|\deri^{1}u_{\ell}|\leq   A_{\varepsilon,\delta}\leq C_{2}$ on $\overline{\set}$.
\end{proof}

\subsection{Existence and uniqueness  of the solution to the NPDS}

Let  $\mathcal{C}^{k}$, $\mathcal{C}^{k,\alpha'}$ be the sets given by $(\hol^{k}(\overline{\set}))^{m}$, $(\hol^{k,\alpha'}(\overline{\set}))^{m}$, respectively, with $k\in\N$ and $\alpha'\in(0,1)$. Defining $||\w||_{\mathcal{C}^{k,\alpha'}}=\max_{i\in\mathbb{I}}\{||\w_{i}||_{\hol^{k,\alpha'}(\overline{\set})}\}$ for each $\w=(\w_{1},\dots,\w_{m})\in\mathcal{C}^{k,\alpha'}$, the reader can verify that $||\cdot||_{\mathcal{C}^{k,\alpha'}}$ is a norm on $\mathcal{C}^{k,\alpha'}$ and  $(\mathcal{C}^{k,\alpha'},||\cdot||_{\mathcal{C}^{k,\alpha'}})$ is a Banach space. 

Notice that for each $\w\in\mathcal{C}^{1,\alpha'}$ fixed, there exists a unique solution $\sol\in\mathcal{C}^{2,\alpha'}$ to the following Linear Dirichlet partial system (LPDS)  
\begin{equation}\label{LPD.1}
\begin{split}
[c_{\ell}-\dif_{\ell}]  \sol_{\ell}= h_{\ell}- \psi_{\varepsilon}(|\deri^{1} \w_{\ell}|^{2}- g_{\ell}^{2})-\sum_{\kappa\in\mathbb{I}\setminus\{\ell\}}\psi_{\delta}(\w_{\ell}-\w_{\kappa} -\vartheta_{\ell,\kappa})  ,\ &\text{in}\ \set,\\
\text{s.t.}\  
\sol_{\ell}=0,\ &\text{on}\ \partial\set ,
\end{split}
\end{equation}
since \eqref{h0}--\eqref{h2} hold and $h_{\ell}- \psi_{\varepsilon}(|\deri^{1} \w_{\ell}|^{2}- g_{\ell}^{2})-\sum_{\kappa\in\mathbb{I}\setminus\{\ell\}}\psi_{\delta}(\w_{\ell}-\w_{\kappa} -\vartheta_{\ell,\kappa})\in\hol^{0,\alpha'}(\overline{\set})$ (see \cite[Thm. 6.14, p. 107 ]{gilb}). Additionally, by   \cite[Eq. (6.73), p. 126]{garroni} (or \cite[Thm. 1.2.10]{garroni}), it is known 
\begin{align}\label{hol1}
||\sol_{\ell}||_{\hol^{2,\alpha'}(\overline{\set})}&\leq K_{5}\bigg[||h_{\ell}||_{\hol^{0,\alpha'}(\overline{\set})}+||\psi_{\varepsilon}(|\deri^{1} \w_{\ell}|^{2}-g_{\ell}^{2})||_{\hol^{0,\alpha'}(\overline{\set})}\notag\\
&\quad+\sum_{\kappa\in\mathbb{I}\setminus\{\ell\}}||\psi_{\delta}(\w_{\ell}-\w_{\kappa} -\vartheta_{\ell,\kappa})||_{\hol^{0,\alpha'}(\overline{\set})}\bigg],\quad \text{for}\ \ell\in\mathbb{I},
\end{align}
for some  $K_{5}=K_{5}(d,\Lambda,\alpha')$. Also, using \cite[Thm. 4.12, p. 85]{adams} and \cite[Thm. 1.2.19]{garroni}, it follows that 
\begin{align}\label{elpd1}
||\sol_{\ell}||_{\hol^{1,\alpha'}(\overline{\set})}&\leq K_{6}\bigg[||h_{\ell}||_{\Lp^{p'}(\set)}+||\psi_{\varepsilon}(|\deri^{1} \w_{\ell}|^{2}-g_{\ell}^{2})||_{\Lp^{p'}(\set)}\notag\\
&\quad+\sum_{\kappa\in\mathbb{I}\setminus\{\ell\}}||\psi_{\delta}(\w_{\ell}-\w_{\kappa} -\vartheta_{\ell,\kappa})||_{\Lp^{p'}(\set)}\bigg],\quad \text{for}\ \ell\in\mathbb{I},
\end{align}
for some  $K_{6}=K_{6}(d,\Lambda,\alpha')$, where $p'\in(d,\infty)$ is such that $\alpha'=1-d/p'$.  We proceed to give the proof of Proposition \ref{princ1.0}. 
\begin{proof}[Proof of Proposition \ref{princ1.0}. Existence.]
	Define the mapping $T:(\mathcal{C}^{1,\alpha'},||\cdot||_{\mathcal{C}^{1,\alpha'}})\longrightarrow(\mathcal{C}^{1,\alpha'},||\cdot||_{\mathcal{C}^{1,\alpha'}})$ as $T[\w]=\sol$ for each $\w\in\mathcal{C}^{1,\alpha'}$, where  $\sol$ is the unique solution to the LDPS \eqref{LPD.1}. To use Schaefer's fixed point theorem (see i.e. \cite[Thm. 4 p. 539]{evans2}), we only need to verify that: (i) the mapping $T$ is continuous and compact; (ii) and the set $\mathcal{A}\eqdef\{\w\in\mathcal{C}^{1,\alpha'}:\w=\varrho T[\w],\ \text{for some }\ \varrho\in[0,1]\}$ is bounded uniformly, i.e. $||\w||_{\mathcal{C}^{1,\alpha'}}\leq C$, for each $\w\in\mathcal{A}$, where $C$ is some positive constant that is independent of $\w$ and $\varrho$.

	To verify the first item above, notice that, by \eqref{hol1}, $T$ maps bounded sets in $\mathcal{C}^{1,\alpha'}$ into bounded sets in $\mathcal{C}^{2,\alpha'}$ that are precompact in $\mathcal{C}^{2}$ and $\mathcal{C}^{1,\alpha'}$. With this remark and by the uniqueness of the solution to the LPDS \eqref{LPD.1}, the reader can verify  $T$ is a continuous and compact mapping from $\mathcal{C}^{1,\alpha'}$ to $\mathcal{C}^{2,\alpha'}\subset\mathcal{C}^{1,\alpha'}$. Let us prove the second item above. Consider now $\w\in \mathcal{A}$. Observe that if $\varrho=0$, it follows immediately that $\w\equiv\overline{0}\in\mathcal{C}^{1,\alpha'}$, where $\overline{0}$ is the null function. Assume that $w\in\mathcal{C}^{1,\alpha'}$ is such that $T[\w]=\frac{1}{\varrho}\w=(\frac{1}{\varrho}\w_{1},\dots,\frac{1}{\varrho}\w_{m})$ for some $\varrho\in(0,1]$, or, in other words, $\w\in\mathcal{C}^{2,\alpha'}$  and
	\begin{equation*}
	[c_{\ell}-\dif_{\ell}]  \w_{\ell}= f_{\ell}  ,\ \text{in}\ \set,\quad
	\text{s.t.}\  
	\w_{\ell}=0,\  \text{on}\ \partial\set , \quad \text{for}\ \ell\in\mathbb{I},
	\end{equation*}  
	where $f_{\ell}\eqdef\varrho\big[h_{\ell}- \psi_{\varepsilon}(|\deri^{1} \w_{\ell}|^{2}- g_{\ell}^{2})-\sum_{\kappa\in\mathbb{I}\setminus\{\ell\}}\psi_{\delta}(\w_{\ell}-\w_{\kappa} -\vartheta_{\ell,\kappa})\big]$. By Theorems 6.14 and 9.19 of \cite{gilb}, it yields  that $\w\in\mathcal{C}^{3,\alpha'}$, since \eqref{h0}--\eqref{h2} hold and $f_{\ell}\in\hol^{1,\alpha'}(\overline{\set})$. Then, $\mathcal{A}\subset\mathcal{C}^{3,\alpha'}$. Applying the same arguments seen in proofs of \eqref{ap1} and \eqref{ap2} (Subsection \ref{a1}), it can be verified that the components of the function vector $\w=(\w_{1},\dots,\w_{m})\in\mathcal{A}$ satisfy
	\begin{equation}\label{enw1}
	0\leq\w_{\ell}\leq C_{1}\ \text{and}\ |\deri^{1}\w_{\ell}|\leq C_{2},\quad \text{on}\ \overline{\set},\ \text{for }\ \ell\in\mathbb{I},
	\end{equation}
	where $C_{1},C_{2}$ are positive constant as in Lemma  \ref{Lb1}. Notice that these constants are independent of $\w$ and $\varrho$.  Applying \eqref{enw1} in \eqref{elpd1}, it follows that $\mathcal{A}$, is bounded uniformly. with that, we see  that the items above, (i) and (ii), are true and by Schaefer's fixed point theorem, there exists a fixed point $u=(u_{1},\dots,u_{m})\in\mathcal{C}^{1,\alpha'}$ to the problem $T[u]=u$ which satisfies the NPDS \eqref{NPD.1}.   {In addition}, we have $u=T[u]\in\mathcal{C}^{2,\alpha'}$ and by  similar arguments seen previously, it can be shown that  $u$ is non-negative and belongs to $\mathcal{C}^{3,\alpha'}(\overline{\set})$. Again, repeating the same arguments seen above, we can  conclude that $u\in\mathcal{C}^{4,\alpha'}(\overline{\set})$.  
\end{proof} 
\begin{proof}[Proof of Proposition \ref{princ1.0}. Uniqueness.] The uniqueness of the solution $u$ to the NPDS \eqref{NPD.1} is obtained by contradiction. Assume that there are two solutions $u,v\in\mathcal{C}^{4,\alpha'}$ to the NPDS \eqref{NPD.1}. Let $\nu=(\nu_{1},\dots,\nu_{m})\in\mathcal{C}^{4,\alpha'}$ such that $\nu_{\ell}\eqdef u_{\ell}-v_{\ell}$ for $\ell\in\mathbb{I}$. Let $(x_{\circ},\ell_{\circ})$ be in  $\overline{\set}\times\mathbb{I}$ such that $\nu_{\ell_{\circ}}(x_{\circ})=\max_{(x,\ell)\in\overline{\set}\times\mathbb{I}}\nu_{\ell}(x)$.  If $x_{ \circ}\in\partial{\set}$, trivially we have $u_{\ell}-v_{\ell}\leq0$ in $\overline{\set}$, for $\ell\in\mathbb{I}$. Suppose that $x_{\circ}\in \set$. Then,
	\begin{equation}\label{sup2.1}
	\begin{split}
	&\deri^{1}\nu_{\ell_{\circ}}(x_{\circ})=0,\quad\tr[a_{\ell_{\circ}}(x_{\circ})\deri^{2}\nu_{\ell}(x_{\circ})]\leq 0,\\
	&u_{\ell_{\circ}}(x_{\circ})-u_{\kappa}(x_{\circ} )\geq v_{\ell_{\circ}}(x_{\circ})-v_{\kappa}(x_{\circ} )\quad \text{for}\ \kappa\neq\ell_{\circ} .
	\end{split} 
	\end{equation}
	Then, from \eqref{NPD.1} and \eqref{sup2.1},
	\begin{align}\label{NPDu.1}
	0&\geq\tr[a_{\ell_{\circ}}\deri^{2}\nu_{\ell_{\circ}}]\notag\\
	&=c_{\ell_{\circ}}\nu_{\ell_{\circ}}+\sum_{\kappa\in\mathbb{I}\setminus\{\ell_{\circ}\}}\big\{\psi_{\delta}(u_{\ell_{\circ}}-u_{\kappa} -\vartheta_{\ell_{\circ},\kappa})-\psi_{\delta}(v_{\ell_{\circ},\kappa}-v_{\kappa} -\vartheta_{\ell_{\circ},\kappa})\big\}\geq c_{\ell_{\circ}}\nu_{\ell_{\circ}}\quad \text{at}\ x_{\circ}
	\end{align}
	because of
	$0\leq\psi_{\delta}(u_{\ell_{\circ}}-u_{\kappa}-\vartheta_{\ell_{\circ},\kappa})
	-\psi_{\delta}(v_{\ell_{\circ}}-v_{\kappa}-\vartheta_{\ell_{\circ},\kappa})$ at $x_{\circ}$, for $\kappa\in\mathbb{I}$. 
	From \eqref{NPDu.1} and since $c_{\ell_{\circ}}>0$, we have that $u_{\ell}(x)-v_{\ell}(x)\leq u_{\ell_{\circ}}(x_{\circ})-v_{\ell_{\circ}}(x_{\circ})\leq0$ for $(x,\ell)\in\overline{\set}\times\mathbb{I}$. Taking now $\nu\eqdef v-u$ and proceeding the same way than before, it follows immediately that $v -u\geq0$.  Therefore $u =v $ and from here we conclude that the NPDS \eqref{NPD.1} has a unique solution $u$, whose components belong to $\hol^{4,\alpha'}(\overline{\set})$.
\end{proof}

\subsection{Verification of \eqref{ap3}}
Let us define the auxiliary function $\phi_{\ell}$  as
	\begin{equation}\label{D2.1}
	\phi_{\ell}\eqdef\omega^{2}|\deri^{2}u_{\ell}|^{2}+\lambda A^{1}_{\varepsilon,\delta}\omega\tr[\alpha_{\ell_{0}}\deri^{2}u_{\ell}]+\mu|\deri^{1}u_{\ell}|^{2}\ \text{ on}\ \set,
	\end{equation}
	with $A^{1}_{\varepsilon,\delta}\eqdef\max_{(x,\ell)\in\overline{\set}\times\mathbb{I}}\omega(x)|\deri^{2}u_{\ell}(x)|$, $\lambda\geq\max\{1,2/\theta\}$, $\mu\geq1$ fixed, and $\alpha_{\ell_{0}}=(\alpha_{\ell_{0}\,ij})_{d\times d}$ be such that $\alpha_{\ell_{0}\,ij}\eqdef  a_{\ell_{0}\,ij}(x_{0})$, where $(x_{0},\ell_{0})\in\overline{\set}\times\mathbb{I}$ fixed. Recall that $\omega$ is a cut-off function as in Remark \ref{R1}.  We shall show that $\phi_{\ell}$ satisfies \eqref{d7}. In particular, \eqref{d7} holds when $\phi_{\ell}$ is evaluated at its maximum  $x_{\mu}\in\set$, which helps to see that \eqref{ap3} is true. 
\begin{lema}\label{D2.0.0}
	Let $\phi_{\ell}$ be the auxiliary function given by \eqref{D2.1}. 
	Then, there exists a positive constant $C_{7}=C_{7}(d,\Lambda ,\alpha',K_{1})$ such that on $(x,\ell)\in\overline{\set}\times\mathbb{I}$,
	\begin{align}\label{d7}
	&\omega^{2}\tr[a_{\ell}\deri^{2}\phi_{\ell}]
	\geq 2\theta[\omega^{4}|\deri^{3}u_{\ell}|^{2}+\mu\omega^{2}|\deri^{2}u_{\ell}|^{2}]-2\lambda C_{7}A^{1}_{\varepsilon,\delta}\omega^{2}|\deri^{3}u_{\ell}|\notag\\
	&-\lambda C_{7}[A^{1}_{\varepsilon,\delta}]^{2}-C_{7}(\lambda+\mu)A^{1}_{\varepsilon,\delta}-C_{7}\mu+\omega^{2}\sum_{\kappa\in\mathbb{I}\setminus\{\ell\}}\psi'_{\delta,\ell,\kappa}(\cdot)[\phi_{\ell}-\phi_{\kappa} ]\notag\\
	&+A^{1}_{\varepsilon,\delta}\omega^{2}\psi'_{\varepsilon,\ell}(\cdot)\bigg\{2\omega[\lambda\theta-2]|\deri^{2}u_{\ell}|^{2}-2\lambda C_{7}|\deri^{2}u_{\ell}|-(\lambda+\mu)C_{7}+\frac{2}{A^{1}_{\varepsilon,\delta}}\langle\deri^{1}u_{\ell},\phi_{\ell}\rangle\bigg\}.
	\end{align}
	
\end{lema}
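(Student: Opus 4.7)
The plan is a direct Bernstein-type computation. I will compute $\tr[a_{\ell}\deri^{2}\phi_{\ell}]$ by expanding the second derivatives of each of the three summands in \eqref{D2.1} via Leibniz, and then use the NPDS \eqref{NPD.1} to convert the unwanted third/fourth order terms into lower-order ones plus derivatives of the penalty functions. The three resulting positive leading contributions will come from uniform ellipticity: from $\partial_{kk'}(\omega^{2}|\deri^{2}u_{\ell}|^{2})$ we pick up $2\omega^{2}\sum_{ij}\langle a_{\ell}\deri^{1}\partial_{ij}u_{\ell},\deri^{1}\partial_{ij}u_{\ell}\rangle\geq 2\theta\,\omega^{2}|\deri^{3}u_{\ell}|^{2}$, and from $\mu\partial_{kk'}|\deri^{1}u_{\ell}|^{2}$ we get $2\mu\sum_{i}\langle a_{\ell}\deri^{1}\partial_{i}u_{\ell},\deri^{1}\partial_{i}u_{\ell}\rangle\geq 2\mu\theta|\deri^{2}u_{\ell}|^{2}$; multiplying the whole identity by $\omega^{2}$ yields the $2\theta[\omega^{4}|\deri^{3}u_{\ell}|^{2}+\mu\omega^{2}|\deri^{2}u_{\ell}|^{2}]$ appearing in \eqref{d7}. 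All cross terms involving $\deri^{1}\omega$ or $\deri^{2}\omega$ are controlled by the cut-off bound $K_{1}$ together with \eqref{ap1}--\eqref{ap2} and the definition of $A^{1}_{\varepsilon,\delta}$, producing contributions of size $O(\lambda[A^{1}_{\varepsilon,\delta}]^{2}+(\lambda+\mu)A^{1}_{\varepsilon,\delta}+\mu)$.

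The genuinely delicate step is to dispose of the fourth-order traces $\omega^{2}\sum_{ij}(\partial_{ij}u_{\ell})\tr[a_{\ell}\deri^{2}\partial_{ij}u_{\ell}]$ that arise from the first summand and $\lambda A^{1}_{\varepsilon,\delta}\omega\sum_{ij}\alpha_{\ell_{0},ij}\tr[a_{\ell}\deri^{2}\partial_{ij}u_{\ell}]$ from the correction term, together with the $2\mu\sum_{i}(\partial_{i}u_{\ell})\tr[a_{\ell}\deri^{2}\partial_{i}u_{\ell}]$ from the gradient square. I will differentiate \eqref{NPD.1} twice to get
\begin{equation*}
\tr[a_{\ell}\deri^{2}\partial_{ij}u_{\ell}]=\partial_{ij}\bigl[c_{\ell}u_{\ell}+\langle b_{\ell},\deri^{1}u_{\ell}\rangle-h_{\ell}+\psi_{\varepsilon,\ell}(\cdot)+\textstyle\sum_{\kappa}\psi_{\delta,\ell,\kappa}(\cdot)\bigr]-\partial_{ij}\tr[a_{\ell}\deri^{2}u_{\ell}]+\tr[a_{\ell}\deri^{2}\partial_{ij}u_{\ell}],
\end{equation*}
in which the expansion of $\partial_{ij}\psi_{\varepsilon,\ell}$ and $\partial_{ij}\psi_{\delta,\ell,\kappa}$ splits into (i) a $\psi''_{\cdot}(\cdot)$ piece which is \emph{non-negative} by convexity of $\psi_{\varepsilon},\psi_{\delta}$ (and will therefore be discarded in the direction of the desired lower bound), and (ii) a $\psi'_{\cdot}(\cdot)$ piece multiplied by the Hessian of the argument. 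Substituting back, the $\psi'_{\cdot}$ pieces contracted against $(\partial_{ij}u_{\ell})$, $\alpha_{\ell_{0},ij}$, and $(\partial_{i}u_{\ell})$ assemble precisely into $\omega^{2}\sum_{\kappa}\psi'_{\delta,\ell,\kappa}(\cdot)[\phi_{\ell}-\phi_{\kappa}]$ and into the expression
\begin{equation*}
A^{1}_{\varepsilon,\delta}\omega^{2}\psi'_{\varepsilon,\ell}(\cdot)\Bigl\{2\omega[\lambda\theta-2]|\deri^{2}u_{\ell}|^{2}-2\lambda C_{7}|\deri^{2}u_{\ell}|-(\lambda+\mu)C_{7}+\tfrac{2}{A^{1}_{\varepsilon,\delta}}\langle\deri^{1}u_{\ell},\phi_{\ell}\rangle\Bigr\},
\end{equation*}
by an exact reorganization: the combination $\omega^{2}|\deri^{2}u_{\ell}|^{2}+\lambda A^{1}_{\varepsilon,\delta}\omega\tr[\alpha_{\ell_{0}}\deri^{2}u_{\ell}]+\mu|\deri^{1}u_{\ell}|^{2}$ matches the definition of $\phi_{\ell}$, and the gradient coupling between $\partial\psi_{\varepsilon}(|\deri^{1}u_{\ell}|^{2}-g_{\ell}^{2})$ and $\deri^{1}|\deri^{1}u_{\ell}|^{2}=2\sum_{i}(\partial_{i}u_{\ell})\deri^{1}\partial_{i}u_{\ell}$ produces the $\langle\deri^{1}u_{\ell},\phi_{\ell}\rangle$ term after substituting the formula for $\deri^{1}\phi_{\ell}$.

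All remaining contributions are zeroth- or first-order in $u_{\ell}$, zeroth- through second-order in $\omega$, and at most quadratic in $A^{1}_{\varepsilon,\delta}$; using \eqref{a4}, \eqref{h2}, the bounds $\|u_{\ell}\|_{\infty}\leq C_{1}$ and $\|\deri^{1}u_{\ell}\|_{\infty}\leq C_{2}$ from \eqref{ap1}--\eqref{ap2}, the cut-off bound $\|\omega\|_{\hol^{2}}\leq K_{1}$, and the elementary inequality $|\deri^{3}u_{\ell}|\cdot M\leq \eta|\deri^{3}u_{\ell}|^{2}+\eta^{-1}M^{2}/4$ applied with a small $\eta$ where needed (so as not to spoil the leading $2\theta\omega^{4}|\deri^{3}u_{\ell}|^{2}$), one absorbs them into the single constant $C_{7}=C_{7}(d,\Lambda,\alpha',K_{1})$ appearing in \eqref{d7}. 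The main obstacle is precisely the algebraic bookkeeping in the previous paragraph: verifying that after substituting NPDS and its two differentiations, the combinations of $(\partial_{ij}u_{\ell})$, $\alpha_{\ell_{0},ij}$ and $(\partial_{i}u_{\ell})$ that multiply the $\psi'$ terms regroup \emph{exactly} into $\phi_{\ell}-\phi_{\kappa}$ and into the bracket on the last line of \eqref{d7}; everything else is a patient but routine book-balancing.
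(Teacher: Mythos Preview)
Your plan follows the same Bernstein-type architecture as the paper, and most of the bookkeeping you describe is accurate. However, there is one genuine gap that touches the heart of the argument.

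You write that the $\psi''_{\varepsilon,\ell}$ and $\psi''_{\delta,\ell,\kappa}$ contributions are ``non-negative by convexity of $\psi_{\varepsilon},\psi_{\delta}$'' and can therefore be discarded. Convexity gives $\psi''\geq0$, but the full contribution has the form
\[
\psi''_{\cdot}(\cdot)\,\bigl\langle\bigl[2\omega^{2}\deri^{2}u_{\ell}+\lambda A^{1}_{\varepsilon,\delta}\,\omega\,\alpha_{\ell_{0}}\bigr]\bar{\eta},\bar{\eta}\bigr\rangle,
\]
and the symmetric matrix $M\eqdef 2\omega^{2}\deri^{2}u_{\ell}+\lambda A^{1}_{\varepsilon,\delta}\,\omega\,\alpha_{\ell_{0}}$ is \emph{not} obviously positive semidefinite: $\deri^{2}u_{\ell}$ has no sign. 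The whole reason the correction term $\lambda A^{1}_{\varepsilon,\delta}\omega\tr[\alpha_{\ell_{0}}\deri^{2}u_{\ell}]$ appears in the definition \eqref{D2.1} of $\phi_{\ell}$, with $\lambda\geq 2/\theta$ and $\alpha_{\ell_{0}}=a_{\ell_{0}}(x_{0})$, is precisely so that
\[
\langle M\gamma,\gamma\rangle\;\geq\;\omega\bigl[\lambda A^{1}_{\varepsilon,\delta}\theta-2\omega|\deri^{2}u_{\ell}|\bigr]|\gamma|^{2}\;\geq\;\omega A^{1}_{\varepsilon,\delta}[\lambda\theta-2]\,|\gamma|^{2}\;\geq\;0,
\]
using \eqref{H2} and the definition of $A^{1}_{\varepsilon,\delta}$. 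This same matrix inequality is what produces the factor $2\omega[\lambda\theta-2]|\deri^{2}u_{\ell}|^{2}$ inside the $\psi'_{\varepsilon,\ell}$ bracket of \eqref{d7}: it arises from bounding $\sum_{i}\langle M\deri^{1}\partial_{i}u_{\ell},\deri^{1}\partial_{i}u_{\ell}\rangle$ below. Your proposal never invokes this inequality, so neither the sign of the $\psi''$ terms nor the origin of the $[\lambda\theta-2]$ coefficient is actually justified.

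A secondary point: the $\psi'_{\delta,\ell,\kappa}$ pieces do \emph{not} assemble ``precisely'' into $\phi_{\ell}-\phi_{\kappa}$. One gets $2\mu[|\deri^{1}u_{\ell}|^{2}-\langle\deri^{1}u_{\ell},\deri^{1}u_{\kappa}\rangle]$ and $2\omega^{2}[|\deri^{2}u_{\ell}|^{2}-\tr(\deri^{2}u_{\ell}\deri^{2}u_{\kappa})]$, which are only $\geq$ the corresponding differences $\mu[|\deri^{1}u_{\ell}|^{2}-|\deri^{1}u_{\kappa}|^{2}]$ and $\omega^{2}[|\deri^{2}u_{\ell}|^{2}-|\deri^{2}u_{\kappa}|^{2}]$ via $|A|^{2}-2\tr[AB]+|B|^{2}\geq0$. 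This is harmless for the lower bound (since $\psi'_{\delta}\geq0$), but it is an inequality, not an identity.
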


Before providing  the verification of the lemma above, let us first proceed to proving  \eqref{ap3}.
\begin{proof}[Proof of Lemma \ref{Lb1}. Eq. \eqref{ap3}]
Let $\phi_{\ell}$ be as in \eqref{D2.1}, where $\lambda\geq\max\{1,2/\theta\}$ is fixed and  $\mu\geq1$ will be determined later on, and $(x_{0}, \ell_{0})\in\overline{\set}\times\mathbb{I}$ satisfies 
	\begin{equation}\label{D2.0}
	\omega(x_{0})|\deri^{2}u_{\ell_{0}}(x_{0})|=A^{1}_{\varepsilon,\delta}=\max_{(x,\ell)\in\overline{\set}\times\mathbb{I}}\omega(x)|\deri^{2}u_{\ell}(x)|.
	\end{equation}
	Notice that 
	if $x_{0}\in\overline{\set}\setminus B_{\beta' r}$, by  Remark \ref{R1} and \eqref{D2.0}, we obtain $\partial_{ij}u_{\ell}(x)\equiv0$, for each $(x,\ell)\times\overline{\set}\times\mathbb{I}.$ From here, \eqref{ap3} is trivially true. So, assume that $x_{0}$ is in $B_{\beta'r}$. Without loss of generality we also assume that $A^{1}_{\varepsilon,\delta}>1$, since if $A^{1}_{\varepsilon,\delta}\leq1$, we get that  $\omega(x) |\deri^{2}u_{\ell}(x)|\leq A^{1}_{\varepsilon,\delta}\leq1$ for $(x,\ell)\in\overline{\set}\times\mathbb{I}$.  Taking $C_{3}=1$, we obtain the result in {\eqref{ap3}}.   Let $(x_{\mu},\ell_{\mu})\in\overline{\set}\times\mathbb{I}$ be such that $\phi_{\ell_{\mu}}(x_{\mu})=\max_{(x,\ell)\in\overline{\set}\times\mathbb{I}}\phi_{\ell}(x)$. If $x_{\mu}\in\overline{\set}\setminus B_{\beta'r}$,  from \eqref{ap2} and \eqref{D2.1}, it follows that
	\begin{equation}\label{D2.2}
	\omega^{2}|\deri^{2}u_{\ell}|^{2}\leq-\lambda A^{1}_{\varepsilon,\delta}\omega\tr[\alpha_{\ell_{0}}\deri^{2}u_{\ell}]+\mu C_{2}^{2},\ \text{for}\ (x,\ell)\in\overline{\set}\times\mathbb{I}.
	\end{equation}
	Evaluating $(x_{0},l_{0})$ in \eqref{D2.2} and by \eqref{eq2}, \eqref{a4}, \eqref{p12.1}, \eqref{NPD.1} and \eqref{ap2}, it can be verified that  $[A_{\varepsilon,\delta}^{1}]^{2}\leq\lambda\Lambda[1+C_{2}] A_{\varepsilon,\delta}^{1}+\mu C_{2}^{2}$. From here  and due to $A^{1}_{\varepsilon,\delta}>1$, we conclude that $\omega(x)|\deri^{2}u_{\ell}(x)|\leq A_{\varepsilon,\delta}^{1}\leq \lambda\Lambda[1+C_{2}]+\mu C_{2}^{2}=:C_{3},\quad \text{for $(x,\ell)\in\overline{\set}\times\mathbb{I}$.}$   From now, assume that $x_{\mu}\in B_{\beta'r}$. Then, 
	\begin{equation}\label{D2.3}
	\begin{split}
	&\deri^{1}\phi_{\ell_{\mu}}(x_{\mu})=0, \quad \tr[a_{\ell_{\mu}}(x_{\mu})\deri^{2}\phi_{\ell_{\mu}}(x_{\mu})]\leq0.
	\end{split}
	\end{equation}
	 Noting that $2\theta\omega^{4}|\deri^{3}u_{\ell}|^{2}-2\lambda C_{7}A^{1}_{\varepsilon,\delta}\omega^{2}|\deri^{3}u_{\ell}|\geq-\frac{\lambda^{2}C^{2}_{7}}{\theta}[A^{1}_{\varepsilon,\delta}]^{2}$, with $C_{7}>0$ {as} in Lemma \ref{D2.0.0},  and using \eqref{d7} and \eqref{D2.3}, it yields that
	\begin{align*}
	0\geq&2\theta\mu\omega^{2}|\deri^{2}u_{\ell_{\mu}}|^{2}-\lambda^{2} C_{7}\bigg[1+\frac{C_{7}}{\theta}\bigg][A^{1}_{\varepsilon,\delta}]^{2}-C_{7}(\lambda+\mu)A^{1}_{\varepsilon,\delta}-C_{7}\mu\notag\\
	&+A^{1}_{\varepsilon,\delta}\omega^{2}\psi'_{\varepsilon,\ell}(\cdot)\{2\omega[\lambda\theta-2]|\deri^{2}u_{\ell_{\mu}}|^{2}-2\lambda C_{7}|\deri^{2}u_{\ell_{\mu}}|-(\lambda+\mu)C_{7}\},\quad \text{at}\ x_{\mu}.
	\end{align*}
	From here, we have that at least one of the next two inequalities is true:
	\begin{align}
	2\theta\mu\omega^{2}|\deri^{2}u_{\ell_{\mu}}|^{2}-\lambda^{2} C_{7}\bigg[1+\frac{C_{7}}{\theta}\bigg][A^{1}_{\varepsilon,\delta}]^{2}-C_{7}(\lambda+\mu)A^{1}_{\varepsilon,\delta}-C_{7}\mu&\leq0,\quad\text{at}\ x_{\mu},\label{d7.1}\\
	A^{1}_{\varepsilon,\delta}\omega^{2}\psi'_{\varepsilon,\ell}(\cdot)\{2\omega[\lambda\theta-2]|\deri^{2}u_{\ell_{\mu}}|^{2}-2\lambda C_{7}|\deri^{2}u_{\ell_{\mu}}|-(\lambda+\mu)C_{7}\}&\leq0,\quad\text{at}\ x_{\mu}.\label{d7.2}
	\end{align}
	Suppose that \eqref{d7.1} holds. Then, evaluating $(x_{\mu},\ell_{\mu})$ in \eqref{D2.1}, it follows
	\begin{align}\label{d.8}
	\phi_{\ell_{\mu}}
	&\leq \frac{\lambda^{2} C_{7}}{2\theta\mu}\bigg[1+\frac{C_{7}}{\theta}\bigg][A^{1}_{\varepsilon,\delta}]^{2}+\frac{C_{7}(\lambda+\mu)}{2\theta\mu}A^{1}_{\varepsilon,\delta}+\frac{C_{7}}{2\theta}+\mu C^{2}_{2}\notag\\
	&\quad+\lambda\Lambda A^{1}_{\varepsilon,\delta}\bigg\{\frac{\lambda^{2} C_{7}}{2\theta\mu}\bigg[1+\frac{C_{7}}{\theta}\bigg][A^{1}_{\varepsilon,\delta}]^{2}+\frac{C_{7}(\lambda+\mu)}{2\theta\mu}A^{1}_{\varepsilon,\delta}+\frac{C_{7}}{2\theta}\bigg\}^{1/2},\quad\text{at}\ x_{\mu}.
	\end{align} 
	Meanwhile, evaluating $(x_{0},\ell_{0})$ in \eqref{D2.1} and  using \eqref{eq2} and \eqref{NPD.1}, we get 
	\begin{align}\label{d.9}
	\phi_{\ell_{0}}
	\geq[A^{1}_{\varepsilon,\delta}]^{2}-\lambda \Lambda A^{1}_{\varepsilon,\delta}[C_{2}+1],\quad\text{at}\ x_{0}.
	\end{align}
	Then, taking $\mu$ large enough such that $\frac{K_{7}^{(\lambda)}}{\mu}\leq\Big[\frac{1}{\lambda\Lambda}\Big[1-\frac{K_{7}^{(\lambda)}}{\mu}\Big]\Big]^{2}$, with $K_{7}^{(\lambda)}\eqdef\frac{\lambda^{2}C_{7}}{2\theta}\big[1+\frac{C_{7}}{\theta}\big]$, using \eqref{d.8}--\eqref{d.9} and since $\phi_{\ell_{0}}(x_{0})\leq\phi_{\ell_{\mu}}(x_{\mu})$ and $\lambda, A^{1}_{\varepsilon,\delta}>1$, we have that 
	\begin{align*}
	\frac{1}{\lambda \Lambda}\bigg[1-\frac{K_{7}^{(\lambda)}}{\mu}\bigg]A^{1}_{\varepsilon,\delta}-K_{8}^{(\mu)}
	\leq\bigg\{\frac{ K_{7}^{(\lambda)}}{\mu}[A^{1}_{\varepsilon,\delta}]^{2}+\frac{C_{7}(\lambda+\mu)}{2\theta\mu}A^{1}_{\varepsilon,\delta}+\frac{C_{7}}{2\theta}\bigg\}^{1/2}.
	\end{align*}
	with $K_{8}^{(\mu)}\eqdef\frac{C_{7}}{2\theta\Lambda}\Big[\frac{1}{\mu}+1\Big]+\frac{C_{7}}{2\theta}+\mu C_{2}$. Then,
	\begin{equation*}
	\bigg\{\frac{1}{\lambda^{2} \Lambda^{2}}\bigg[1-\frac{K_{7}^{(\lambda)}}{\mu}\bigg]^{2}-
	\frac{ K_{7}^{(\lambda)}}{\mu}\bigg\}[A^{1}_{\varepsilon,\delta}]^{2} 
	\leq\bigg\{\frac{2K_{8}^{(\mu)}}{\lambda \Lambda}\bigg[1-\frac{K_{7}^{(\lambda)}}{\mu}\bigg]+\frac{C_{7}(\lambda+\mu)}{2\theta\mu}\bigg\}A^{1}_{\varepsilon,\delta}+\frac{C_{7}}{2\theta}.
	\end{equation*}
	From here, we conclude there exists a constant $C_{3}=C_{3}(d,\Lambda,\alpha',K_{3})$ such that
	\begin{equation*}
	\omega(x)|\deri^{2}u_{\ell}(x)|\leq A^{1}_{\varepsilon,\delta}\leq C_{3}\quad \text{for}\  (x,\ell)\in \overline{\set}\times\mathbb{I}. 
	\end{equation*}
	Now, assume that \eqref{d7.2} holds. Then,  $2\omega^{2}[\lambda\theta-2]|\deri^{2}u_{\ell_{\mu}}|^{2}\leq2\lambda C_{7}\omega|\deri^{2}u_{\ell_{\mu}}|+(\lambda+\mu)C_{7}$ at $x_{\mu}$ due to $\psi'_{\varepsilon}\geq0$ and $\omega\leq1$. From here, we have that $\omega|\deri^{2}u_{\ell_{\mu}}|\leq K_{9}^{(\lambda,\mu)}$ at $x_{\mu}$, where $K_{9}^{(\lambda,\mu)}$ is a positive constant independent of $A_{\varepsilon,\delta}^{1}$. Therefore, $[A^{1}_{\varepsilon,\delta}]^{2}-\lambda\Lambda A^{1}_{\varepsilon,\delta}[C_{2}+1]\leq\phi_{\ell_{0}}(x_{0})\leq\phi_{\ell_{\mu}}(x_{\mu})\leq [K_{9}^{(\lambda,\mu)}]^{2}+\lambda\Lambda A^{1}_{\varepsilon,\delta} K_{9}^{(\lambda,\mu)}+\mu C^{2}_{2}$. From here, we conclude there exists a constant $C_{3}=C_{3}(d,\Lambda,\alpha',K_{1})$ such that $\omega|\deri^{2}u_{\ell}|\leq A^{1}_{\varepsilon,\delta}\leq C_{3}$ for all $(x,\ell)\in \overline{\set}\times\mathbb{I}$.
\end{proof}

\begin{proof}[Proof of Lemma \ref{D2.0.0}]
Taking first and second derivatives of $\phi_{\ell}$ on $\overline{B}_{\beta'r}$, it can be verified that 
	\begin{align*}
	\tr[a_{\ell}\deri^{2}|\deri^{1}u_{\ell}|^{2}]&=|\deri^{2}u_{\ell}|^{2}\tr[a_{\ell}\deri^{2}\omega^{2}]+2\langle a_{\ell}\deri^{1}\omega^{2},\deri^{1}|\deri^{2}u_{\ell}|^{2}\rangle+\omega^{2}\tr[a_{\ell}\deri^{2}|\deri^{2}u_{\ell}|^{2}]\notag\\
	&\quad+\lambda A^{1}_{\varepsilon,\delta}\tr[\alpha_{\ell_{0}}\deri^{2}u_{\ell}]\tr[a_{\ell}\deri^{2}\omega]+2\lambda A^{1}_{\varepsilon,\delta}\langle a_{\ell}\deri^{1}\omega,\deri^{1}\tr[\alpha_{\ell_{0}}\deri^{2}u_{\ell}]\rangle\notag\\
	&\quad+\lambda A^{1}_{\varepsilon,\delta}\omega\sum_{ji}\alpha_{\ell_{0}\,ji}\tr[a_{\ell}\deri^{2}\partial_{ji}u_{\ell}]+\mu\tr[a_{\ell}\deri^{2}|\deri^{1}u_{\ell}|^{2}].
	\end{align*}	
	From here and noticing that from \eqref{H2},
	\begin{align*}
	\tr[a_{\ell}\deri^{2}|\deri^{1}u_{\ell}|^{2}]
	&\geq 2\theta|\deri^{2}u_{\ell}|^{2}+2\sum_{i}\partial_{i}u_{\ell}\tr[a_{\ell}\deri^{2}\partial_{i}u_{\ell}],\\
	\tr[a_{\ell}\deri^{2}|\deri^{2}u_{\ell}|^{2}]
	&\geq 2\theta|\deri^{3}u_{\ell}|^{2}+2\sum_{ji}\partial_{ji}u_{\ell}\tr[a_{\ell}\deri^{2}\partial_{ji}u_{\ell}],
	\end{align*}
	it follows that
	\begin{align}\label{d1.2}
	\tr[a_{\ell}\deri^{2}\phi_{\ell}]&\geq 2\theta[\omega^{2}|\deri^{3}u_{\ell}|^{2}+\mu|\deri^{2}u_{\ell}|^{2}]+|\deri^{2}u_{\ell}|^{2}\tr[a_{\ell}\deri^{2}\omega^{2}]\notag\\
	&\quad+2\langle a_{\ell}\deri^{1}\omega^{2},\deri^{1}|\deri^{2}u_{\ell}|^{2}\rangle+\lambda A^{1}_{\varepsilon,\delta}\tr[\alpha_{\ell_{0}}\deri^{2}u_{\ell}]\tr[a_{\ell}\deri^{2}\omega]\notag\\
	&\quad+2\lambda A^{1}_{\varepsilon,\delta}\langle a_{\ell}\deri^{1}\omega,\deri^{1}\tr[\alpha_{\ell_{0}}\deri^{2}u_{\ell}]\rangle+2\mu\sum_{i}\tr[a_{\ell}\deri^{2}\partial_{i}u_{\ell}]\partial_{i}u_{\ell}\notag\\
	&\quad+\sum_{ji}[2\omega^{2}\partial_{ji}u_{\ell}+\lambda A^{1}_{\varepsilon,\delta}\omega\alpha_{\ell_{0}\,ji}]\tr[a_{\ell}\deri^{2}\partial_{ji}u_{\ell}].
	\end{align}	
	Meanwhile, differentiating twice in \eqref{NPD.1}, we see that 
	\begin{align}\label{d2}
	\tr[a_{\ell}\deri^{2}\partial_{ji}u_{\ell}]
	&=\psi''_{\varepsilon,\ell}(\cdot)\bar{\eta}^{(i)}_{\ell}\bar{\eta}^{(j)}_{\ell}+\psi'_{\varepsilon,\ell}(\cdot)\partial_{ji}[|\deri^{1}u_{\ell}|^{2}-g_{\ell}^{2}]+\sum_{\kappa\in\mathbb{I}\setminus\{\ell\}}\psi''_{\delta,\ell,\kappa}(\cdot)\bar{\eta}^{(i)}_{\ell,\kappa}\bar{\eta}^{(j)}_{\ell,\kappa}\notag\\
	&\quad+\sum_{\kappa\in\mathbb{I}\setminus\{\ell\}}\psi'_{\delta,\ell,\kappa}(\cdot)\partial_{ji}[u_{\ell}-u_{\kappa} ]-\tr[\partial_{j}[a_{\ell}]\deri^{2}\partial_{i}u_{\ell}]-\tr[[\partial_{ji}a_{\ell}]\deri^{2}u_{\ell}]\notag\\
	&\quad-\tr[[\partial_{i}a_{\ell}]\deri^{2}\partial_{j}u_{\ell}]-\partial_{ji}[h^{\ell}-\langle b_{\ell},\deri^{1}u_{\ell}\rangle-c_{\ell}u_{\ell}],
	\end{align}  
	where $\bar{\eta}_{\ell}=(\bar{\eta}_{\ell}^{(1)},\dots,\bar{\eta}_{\ell}^{(d)})$ and $\bar{\eta}_{\ell,\kappa}=(\bar{\eta}_{\ell,\kappa}^{(1)},\dots,\bar{\eta}_{\ell,\kappa}^{(d)})$ with  $\bar{\eta}_{\ell}^{(i)}\eqdef\partial_{i}[|\deri^{1}u_{\ell}|^{2}-g_{\ell}^{2}]$ {and} $\bar{\eta}_{\ell,\kappa}^{(i)}\eqdef\partial_{i}[u_{\ell}-u_{\kappa} ].$  From \eqref{dercot2} and \eqref{d1.2}--\eqref{d2}, it follows that 
	\begin{align}\label{d3}
	\omega^{2}\tr[a_{\ell}\deri^{2}\phi_{\ell}]
	&\geq 2\theta[\omega^{4}|\deri^{3}u_{\ell}|^{2}+\mu\omega^{2}|\deri^{2}u_{\ell}|^{2}]+\widetilde{D}_{3}+\widetilde{D}_{4}\notag\\
	&\qquad+\omega^{2}\bigg\{\psi''_{\varepsilon,\ell}(\cdot)\langle[2\omega^{2}\deri^{2}u_{\ell}+\lambda A^{1}_{\varepsilon,\delta}\omega\alpha_{\ell_{0}}]\bar{\eta}_{\ell},\bar{\eta}_{\ell}\rangle\notag\\
	&\qquad+\sum_{\kappa\in\mathbb{I}\setminus\{\ell\}}\psi''_{\delta,\ell,\kappa}(\cdot)\langle[2\omega^{2}\deri^{2}u_{\ell}+\lambda A^{1}_{\varepsilon,\delta}\omega\alpha_{\ell_{0}}]\bar{\eta}_{\ell,\kappa},\bar{\eta}_{\ell,\kappa}\rangle\bigg\}\notag\\
	&\qquad+\omega^{2}\psi'_{\varepsilon,\ell}(\cdot)\widetilde{D}_{5}+\omega^{2}\sum_{\kappa\in\mathbb{I}\setminus\{\ell\}}\psi'_{\delta,\ell,\kappa}(\cdot)\widetilde{D}_{6\,\kappa},
	\end{align}
	where
	\begin{align*}
	\widetilde{D}_{3}&\eqdef2\omega^{2}\langle a_{\ell}\deri^{1}\omega^{2},\deri^{1}|\deri^{2}u_{\ell}|^{2}\rangle+2\lambda A^{1}_{\varepsilon,\delta}\omega^{2}\langle a_{\ell}\deri^{1}\omega,\deri^{1}\tr[\alpha_{\ell_{0}}\deri^{2}u_{\ell}]\rangle\notag\\
	&\quad-\sum_{ij}[2\omega^{4}\partial_{ij}u_{\ell}+\lambda A^{1}_{\varepsilon,\delta}\omega^{3}\alpha_{\ell_{0}\,ij}][2\tr[\partial_{j}a_{\ell}\deri^{2}\partial_{i}u_{\ell}]-\partial_{ij}\langle b_{\ell},\deri^{1}u_{\ell}\rangle],\\
	\widetilde{D}_{4}&\eqdef\omega^{2}|\deri^{2}u_{\ell}|^{2}\tr[a_{\ell}\deri^{2}\omega^{2}]\notag-\mu\omega^{2}\widetilde{D}_{2}u_{\ell}+\lambda A^{1}_{\varepsilon,\delta}\omega^{2}\tr[\alpha_{\ell_{0}}\deri^{2}u_{\ell}]\tr[a_{\ell}\deri^{2}\omega]\notag\\
	&\quad-\sum_{ji}[2\omega^{4}\partial_{ji}u_{\ell}+\lambda A^{1}_{\varepsilon,\delta}\omega^{3}\alpha_{\ell_{0}\,ji}]\left\{\tr[[\partial_{ji}a_{\ell}]\deri^{2}u_{\ell}]+\partial_{ji}[h_{\ell}-c_{\ell}u_{\ell}]\right\},\\
	\widetilde{D}_{5}&\eqdef 2\mu\langle\deri^{1}u_{\ell},\bar{\eta}_{\ell}\rangle+\tr[[2\omega^{2}\deri^{2}u_{\ell}+\lambda A^{1}_{\varepsilon,\delta}\omega\alpha_{\ell_{0}}]\deri^{2}[|\deri^{1}u_{\ell}|^{2}-g_{\ell}^{2}]],\\
	\widetilde{D}_{6\,\kappa}&\eqdef 2\mu\langle\deri^{1}u_{\ell},\bar{\eta}_{\ell,\kappa}\rangle+\tr[[2\omega^{2}\deri^{2}u_{\ell}+\lambda A^{1}_{\varepsilon,\delta}\omega\alpha_{\ell_{0}}]\deri^{2}[u_{\ell}-u_{\kappa} ]].
	\end{align*}
	Recall that $\widetilde{D}_{2}u_{\ell}$ is given in \eqref{dercot2.1}. To obtain the next inequalities, we shall recurrently use \eqref{a4}, \eqref{h2}, Remark \ref{R1},  \eqref{ap1}, \eqref{ap2} and $\lambda,\mu\geq1$. Then,
	\begin{align}
	\widetilde{D}_{3}
	&\geq-2\left\{4\Lambda K_{1}d^{4}+\Lambda^{2} d^{4}+d^{3}[2+\Lambda][d+\Lambda]\right\}\lambda A^{1}_{\varepsilon,\delta}\omega^{2}|\deri^{3}u_{\ell}|\notag\\
	&\quad-2d^{2}\lambda A^{1}_{\varepsilon,\delta}[2+\Lambda]dC_{2}\Lambda-4d^{3}\Lambda\lambda[A^{1}_{\varepsilon,\delta}]^{2}[2+\Lambda],
	\intertext{and by \eqref{H2},}
	\widetilde{D}_{4}
	&\geq-\{2\Lambda d^{2}K_{1}+ d^{4}\Lambda^{2}K_{1}+d^{2}\Lambda[2+\Lambda][d^{2}+1]\}\lambda[A^{1}_{\varepsilon,\delta}]^{2}\notag\\
	&\quad-\{2\mu[2C_{2}\Lambda d^{3}+2d^{1/2}\Lambda C_{2}]+d^{2}\lambda\Lambda[2+\Lambda][2C_{2}+C_{1}]\}A^{1}_{\varepsilon,\delta}\notag\\
	&\quad-2\mu\{2C_{2}\Lambda d^{2}+2C_{1}C_{2}d^{1/2}\Lambda-2C_{2}\Lambda d^{1/2}\}.
	\end{align}	 
	On the other hand, since $\lambda\geq\frac{2}{\theta}$ and using \eqref{H2}, we have that
	\begin{align}\label{d4}
	\langle[2\omega^{2}\deri^{2}u_{\ell}+\lambda A^{1}_{\varepsilon,\delta}\omega\alpha_{\ell_{0}}]\gamma,\gamma\rangle&\geq\omega[\lambda A^{1}_{\varepsilon,\delta}\theta-2\omega|\deri^{2}u_{\ell}|]\,|\gamma|^{2} \geq\omega A^{1}_{\varepsilon,\delta}[\lambda\theta-2]\,|\gamma|^{2}\geq0,
	\end{align}
	for $\gamma\in\R^{d}$. From here and since $\psi''_{\varepsilon,\ell}(\cdot)\geq0$ and $\psi''_{\delta,\ell,\kappa}(\cdot)\geq0$, it follows that 
	\begin{multline}\label{d5}
	\psi''_{\varepsilon,\ell}(\cdot)\langle[2\omega^{2}\deri^{2}u_{\ell}+\lambda A^{1}_{\varepsilon,\delta}\omega\alpha_{\ell_{0}}]\bar{\eta}_{\ell},\bar{\eta}_{\ell}\rangle\\
	+\sum_{\kappa\in\mathbb{I}\setminus\{\ell\}}\psi''_{\delta,\ell,\kappa}(\cdot)\langle[2\omega^{2}\deri^{2}u_{\ell}+\lambda A^{1}_{\varepsilon,\delta}\omega\alpha_{\ell_{0}}]\bar{\eta}_{\ell,\kappa},\bar{\eta}_{\ell,\kappa}\rangle\geq0.
	\end{multline}
	It is easy to verify that
	\begin{multline}\label{d8}
	\omega^{2}\langle\deri^{1}u_{\ell},\deri^{1}|\deri^{2}u_{\ell}|^{2}\rangle+\lambda A^{1}_{\varepsilon,\delta}\omega\langle\deri^{1}u_{\ell},\deri^{1}\tr[\alpha_{\ell_{0}}\deri^{2}u_{\ell}]\rangle+\mu\langle\deri^{1}u_{\ell},\deri^{1}|\deri^{1}u_{\ell}|^{2}\rangle\\
	=\langle\deri^{1}u_{\ell},\deri^{1}\phi_{\ell}\rangle-\langle\deri^{1}u_{\ell},\deri^{1}\omega^{2}\rangle|\deri^{2}u_{\ell}|^{2}-\lambda A^{1}_{\varepsilon,\delta}\tr[\alpha_{\ell_{0}}\deri^{2}u_{\ell}]\langle\deri^{1}u_{\ell},\deri^{1}\omega\rangle
	\end{multline}
	due to  $\partial_{i}\phi_{\ell}=|\deri^{2}u_{\ell}|^{2}\partial_{i}\omega^{2}+\omega^{2}\partial_{i}|\deri^{2}u_{\ell}|^{2}+\lambda A^{1}_{\varepsilon,\delta}\tr[\alpha_{\ell_{0}}\deri^{2}u_{\ell}]\partial_{i}\omega+\lambda A^{1}_{\varepsilon,\delta}\omega\tr[\alpha_{\ell_{0}}\deri^{2}\partial_{i}u_{\ell}]+\mu\partial_{i}|\deri^{1}u_{\ell}|^{2}\quad\text{on}\ B_{\beta r}$. Then,  by \eqref{d4} and \eqref{d8},
	\begin{align}
	\widetilde{D}_{5}
	&\quad\geq2\omega A^{1}_{\varepsilon,\delta}[\lambda\theta-2]|\deri^{2}u_{\ell}|^{2}+2\langle\deri^{1}u_{\ell},\deri^{1}\phi_{\ell}\rangle\notag\\
	&\qquad-4\lambda d^{1/2}C_{2}K_{1}A^{1}_{\varepsilon,\delta}|\deri^{2}u_{\ell}|-2\lambda\Lambda d^{5/2}C_{2}K_{1}A^{1}_{\varepsilon,\delta}|\deri^{2}u_{\ell}|\notag\\
	&\qquad-4\mu  \Lambda^{2}d^{1/2}A^{1}_{\varepsilon,\delta}C_{2}-2\lambda d\Lambda^{2}A^{1}_{\varepsilon,\delta}-\lambda\Lambda^{3}d^{2}A^{1}_{\varepsilon,\delta}\notag\\
	&\qquad-4d^{2}\lambda\Lambda^{2}A^{1}_{\varepsilon,\delta}-2\Lambda^{3}d^{2}\lambda A^{1}_{\varepsilon,\delta}.
	\end{align}
	Using the following properties $|A|^{2}-2\tr[AB]+|B|^{2}=\sum_{ij}(A_{ij}-B_{ij})^{2}\geq0$ and $|y_{1}|^{2}-2\langle y_{1},y_{2}\rangle+|y|^{2}=\sum_{i}(y_{1,i}-y_{2,i})^{2}\geq0$,
		where $A=(A_{ij})_{d\times d},B=(B_{ij})_{d\times d}$ and $y_{1}=(y_{1,2},\dots,y_{1,d}),y_{2}=(y_{2,1},\dots,y_{2,d})$ belong $\mathcal{S}(d)$ and $\R^{d}$, respectively, and by definition of $\phi_{\ell}$,  it is easy to corroborate the following identity
	\begin{equation}\label{d6}
	\widetilde{D}_{6\,\kappa}\geq \phi_{\ell}-\phi_{\kappa} ,\ \text{for}\ \kappa\neq\ell.
	\end{equation}
	Applying \eqref{d5}--\eqref{d6} in \eqref{d3} and considering that all  constants that appear in those inequalities (i.e. \eqref{d5}--\eqref{d6}) are bounded by an universal constant $C_{7}=C_{7}(d,\Lambda,\alpha',K_{1})$, we obtain the desired result in the lemma above. With this remark, the proof is concluded.
\end{proof}


\begin{thebibliography}{34}
	
	\bibitem{adams} R. Adams and J. Fournier, \textit{Sobolev Spaces}, second ed., Pure and Applied Mathematics, vol. 140, Amsterdam, Elsevier/Academic Press, 2003.
	
	
	
	
	
	\bibitem{AM2015}  P. Azcue and N. Muler. \textit{Optimal dividend payment and regime switching in a compound {P}oisson risk model}, SIAM J. Control Optim. 53 (2015), no. 5,  3270--3298.  
	
	
	\bibitem{CLS2013} E. Chevalier, V. Ly Vath, and S.  Scotti, \textit{An optimal dividend and investment control problem under debt constraints}, SIAM J. Financial Math. 4 (2013), no. 1, 297--326. 
	
	\bibitem{CGL2017} E. Chevalier, M. Ga\"{\i}gi and  V. Ly Vath, \textit{Liquidity risk and optimal dividend/investment strategies}, Math. Financ. Econ. 11 (2017), no. 1, 111--135. 
	
	\bibitem{Cs1} G. Csat\'o, B. Dacorogna and O. Kneuss, \textit{The pullback equation for differential forms}, Progress in Nonlinear Differential Equations and their Applications, 83, New
	York, Birkh{\"a}user/Springer,  2012.
	
	
	
	
	
	
	
	
	
	
	
	
	\bibitem{evans2} L. Evans, \text{Partial Differential Equations}, second ed., Graduate Studies in Mathematics, vol. 19, American Mathematical Society, Providence, RI, 2010.
	
	
	
	
	\bibitem{garroni} M. Garroni and J. Menaldi, Second Order Elliptic Integro-differential Problems, Chapman \& Hall/CRC Research Notes in Mathematics, vol. 430, Boca Raton, FL, 2002.
	
	
	\bibitem{gilb} D. Gilbarg and N. Trudinger, \textit{Elliptic Partial Differential Equations of Second Order, Classics in Mathematics}, Berlin, Springer-Verlag,  2001, Reprint of the 1998 edition.
	
	\bibitem{GP2008}  X. Guo and P. Tomecek, \textit{Connections between singular control and optimal switching}, SIAM J. Control Optim. 47 (2008), no. 1, 421--443. 
	
	
	
	
	
	
	
	
	
	\bibitem{KM2019} M. Kelbert, and H. A. Moreno-Franco  \textit{H{JB} equations with gradient constraint associated with controlled jump-diffusion processes}, SIAM J. Control Optim. 57 (2019), no. 3, 2185--2213.
	
	
	
	
	\bibitem{lenh1983} S. Lenhart and S. Belbas, \textit{A system of nonlinear partial differential equations arising in the optimal control of stochastic systems with switching costs}, SIAM J. Appl. Math. 43 (1983), no. 3, 465--475.
	
	
	
	\bibitem{lions} P. Lions, \textit{A remark on Bony maximum principle}, Proc. Amer. Math. Soc. 88 (1983), no. 3, 503--508.
	
	\bibitem{pham2008} V. Ly Vath, H. Pham  and S. Villeneuve, \textit{A mixed singular/switching control problem for a dividend policy with reversible technology investment}, Ann. Appl. Probab. 18 (2008), no. 3, 1164--1200.
	
	
	
	
	\bibitem{P2009} H.  Pham, \textit{Continuous-time Stochastic Control and Optimization with
		Financial Applications}, vol. 61, Berlin, Springer-Verlag, 2009. 
	
	\bibitem{pro} P. Protter, \textit{Stochastic Integration and Differential Equations}, Stochastic Modelling and Applied Probability, vol. 21, Berlin, Springer-Verlag, 2005, Second edition.
	
	
	
	\bibitem{Y1988} N. Yamada, \textit{The {H}amilton-{J}acobi-{B}ellman equation with a gradient constraint}, J. Differential Equations 71 (1988), no. 1, 185--199.
	
	

	
	
\end{thebibliography}
\end{document}